\numberwithin{equation}{section}
\theoremstyle{plain}
\newtheorem*{mainthm}{Main Theorem}
\newtheorem{theorem}{Theorem}[section]
\newtheorem{proposition}[theorem]{Proposition}
\newtheorem{lemma}[theorem]{Lemma}
\newtheorem{corollary}[theorem]{Corollary}
\newtheorem{claim}[theorem]{Claim}
\theoremstyle{definition}
\newtheorem{definition}[theorem]{Definition}
\newtheorem{example}[theorem]{Example}
\theoremstyle{remark}
\newtheorem{remark}[theorem]{Remark}
\newcommand{\eps}{\varepsilon}
\newcommand{\setsuch}[2]{\left\{ #1 \; \middle| \; #2 \right\}}
\newcommand{\restr}[2]{{\left. #1 \right|}_{#2}}
\newcommand{\subl}{{\mathsmaller{<}}}
\newcommand{\subg}{{\mathsmaller{>}}}
\newcommand{\sube}{{\mathsmaller{=}}}
\newcommand{\suble}{{\mathsmaller{\leq}}}
\newcommand{\subge}{{\mathsmaller{\geq}}}
\newcommand{\ext}{\mathsf{\Lambda}}
\newcommand{\transpose}[1]{#1^{\mathrm t}}
\DeclareMathOperator{\Diag}{Diag}
\DeclareMathOperator{\Aff}{Aff}
\DeclareMathOperator{\Id}{Id}
\DeclareMathOperator{\Ad}{Ad}
\DeclareMathOperator{\GL}{GL}
\DeclareMathOperator{\PSL}{PSL}
\DeclareMathOperator{\SO}{SO}
\DeclareMathOperator{\PSO}{PSO}
\DeclareMathOperator{\PSU}{PSU}
\DeclareMathOperator{\Orth}{O}
\newcommand{\ie}{i.e.\ }
\newcommand{\eg}{e.g.\ }
\newcommand{\longlongrightarrow}{\xrightarrow{\hspace*{1cm}}}
\def\hyph{-\penalty0\hskip0pt\relax} 
\begin{document}

\title{Proper affine actions on semisimple Lie algebras}
\author{Ilia Smilga}
              
\maketitle

\begin{abstract}
For any noncompact semisimple real Lie group $G$, we construct a group of affine transformations of its Lie algebra $\mathfrak{g}$ whose linear part is Zariski-dense in $\Ad G$ and which is free, nonabelian and acts properly discontinuously on~$\mathfrak{g}$.
\end{abstract}

\section{Introduction}
\label{sec:intro}

\subsection{Background and motivation}
\label{sec:background}

The present paper is part of a larger effort to understand discrete groups $\Gamma$ of affine transformations (subgroups of the affine group $\GL_n(\mathbb{R}) \ltimes \mathbb{R}^n$) acting properly discontinuously on the affine space $\mathbb{R}^n$. The case where $\Gamma$ consists of isometries (in other words, $\Gamma \subset \Orth_n(\mathbb{R}) \ltimes \mathbb{R}^n$) is well-understood: a classical theorem by Bieberbach says that such a group always has an abelian subgroup of finite index.

We say that a group $G$ acts \emph{properly discontinuously} on a topological space $X$ if for every compact $K \subset X$, the set $\setsuch{g \in G}{g K \cap K \neq \emptyset}$ is finite. We define a \emph{crystallographic} group to be a discrete group $\Gamma \subset \GL_n(\mathbb{R}) \ltimes \mathbb{R}^n$ acting properly discontinuously and such that the quotient space $\mathbb{R}^n / \Gamma$ is compact. In \cite{Aus64}, Auslander conjectured that any crystallographic group is virtually solvable, that is, contains a solvable subgroup of finite index. Later, Milnor \cite{Mil77} asked whether this statement is actually true for any affine group acting properly discontinuously. The answer turned out to be negative: Margulis \cite{Mar83, Mar87} gave a nonabelian free group of affine transformations with linear part Zariski-dense in $\SO(2, 1)$, acting properly discontinuously on $\mathbb{R}^3$. On the other hand, Fried and Goldman \cite{FG83} proved the Auslander conjecture in dimension 3 (the cases $n=1$ and $2$ are easy). Recently, Abels, Margulis and Soifer \cite{AMS13} proved it in dimension $n \leq 6$. See \cite{AbSur} for a survey of already known results.

Margulis's counterexample was also generalized by Abels et al. in \cite{AMS02} to subgroups of $\SO(2n+2,2n+1)$ for all values of $n$. The author improved this result in \cite{Smi13} by giving an explicit construction of associated fundamental domains. (For Margulis's original counterexample, this had been done by Drumm in \cite{Dru92, Dru93}.) However, as far as I know, no other counterexamples to the Milnor conjecture were known until today. In this paper, we construct another family of counterexamples. Here is the result we prove:

\begin{mainthm}
Let $G$ be any noncompact semisimple real Lie group. Consider the "affine group" $G \ltimes \mathfrak{g}$, for the adjoint action of $G$ on its Lie algebra $\mathfrak{g}$. Then there is a subgroup $\Gamma \subset G \ltimes \mathfrak{g}$ whose linear part is Zariski-dense in $G$ and that is free, nonabelian and acts properly discontinuously on the affine space corresponding to $\mathfrak{g}$.
\end{mainthm}

The general strategy of the proof comes from Margulis's original paper \cite{Mar87}; some ideas were also inspired by \cite{AMS11}. (Since the neutral component of $\SO(2, 1)$ acting on $\mathbb{R}^3$ is isomorphic to $\PSL_2(\mathbb{R})$ acting on $\mathfrak{sl}_2(\mathbb{R})$, Margulis's first example is indeed a particular case of this theorem.) Like Margulis, we introduce for some affine maps $g$ an invariant that measures the translation part of $g$ along its neutral space $A^\sube_g$ (defined later). The key part of our argument, just as in \cite{Mar87}, it to show that under some conditions, the invariant of the product of two maps is roughly equal to the sum of their invariants (Proposition \ref{invariant_additivity}). There are two difficulties that were not present in \cite{Mar87}.

First, while the original Margulis invariant was a scalar, our invariant is a vector. To define it properly, we need to introduce some canonical identifications between different spaces $A^\sube_g$, and then follow the transformations of the canonical representative of some vector living in one of these spaces as it gets projected to other spaces.

Second, it turns out that in the general case, $g$ restricted to $A^\sube_g$ is not always a pure translation. It sometimes has a rotation part, but that part is always confined to a proper vector subspace of $A^\sube_g$. The argument still works, but becomes more complicated.

Another novelty of this paper is the notion of a $C$-non-degenerate pair of spaces, which, in the case of affine spaces, encompasses both a quantitative measure of transversality and an upper bound on the distance of these spaces from the origin. It makes the proofs somewhat clearer and simpler.

\subsection{Plan of the paper}
\label{sec:plan}

In Section \ref{sec:basic}, we give some definitions and basic algebraic and metric properties. In Subsection \ref{sec:affine}, we replace the affine space by a linear space $\hat{\mathfrak{g}}$ with one more dimension, more practical to work with; and we define, for every element of the group $G \ltimes \mathfrak{g}$, a family of "dynamical" vector and affine subspaces. In Subsection \ref{sec:lie}, we define some classical subalgebras of $\mathfrak{g}$, including the centralizer $\mathfrak{l}$ of a Cartan subspace. In Subsection \ref{sec:algebraic}, we give some basic algebraic properties: we relate the dynamical subspaces of an $\mathbb{R}$-regular map (see Definition \ref{Rregular}) with the classical subalgebras, and we show that for every such map, the "geometry of the problem" is essentially given by a pair of transverse affine minimal parabolic algebras. In Subsection \ref{sec:quasi-translations}, we introduce an important class of automorphisms of the affine space parallel to $\mathfrak{l}$, called quasi-translations. In Subsection \ref{sec:canonical}, we use the previous two subsections to identify (up to quasi-translation) different pairs of transverse affine minimal parabolic algebras, and to show that these identifications are "natural"; this allows us to define a generalized Margulis invariant (which is a vector). In Subsection \ref{sec:metric}, we introduce a Euclidean metric on the "extended affine space" $\hat{\mathfrak{g}}$, and use it to define two important things: the notion of a $C$-non-degenerate pair of transverse affine minimal parabolic algebras (which means that we may pretend that they are perpendicular and err by no more than some function of $C$), and the contraction strength of an $\mathbb{R}$-regular map. In Subsection \ref{sec:affine_to_linear}, we relate these metric properties of an element of $G \ltimes \mathfrak{g}$ and those of its linear part.

In Section \ref{sec:regular_product}, we show that the product of two $\mathbb{R}$-regular maps "in general position" is still $\mathbb{R}$-regular, and relate the geometry and contraction strength of the product to the relative geometry and contraction strengths of the factors. We do this by examining the dynamics of these maps acting on some exterior power $\ext^p \hat{\mathfrak{g}}$. This section is more or less a generalization of Section 3 of the author's earlier paper \cite{Smi13}, with very similar proofs.

Section \ref{sec:additivity} contains the key part of our argument. We show that under suitable hypotheses, the Margulis invariant of a product of two $\mathbb{R}$-regular maps is approximately equal to the sum of their Margulis invariants. We also relate the Margulis invariants of a map and of its inverse.

In Section \ref{sec:induction}, we use induction to show a similar result for the product of an arbitrary number of maps.

In Section \ref{sec:construction}, we construct a group satisfying the Main Theorem. As generators, we take a family of $\mathbb{R}$-regular, strongly contracting maps in general position with suitable Margulis invariants. Using the result of the previous section, we show that elements of the group have Margulis invariants that grow unboundedly, which turns out (by Lemma \ref{properly_discontinuous}) to ensure a properly discontinuous action.

\section{Preliminary definitions and properties}
\label{sec:basic}

We fix a noncompact semisimple real Lie group $G$. Without loss of generality, we may assume that $G$ is connected with trivial center. We see the group $G$ as a group of automorphisms of $\mathfrak{g}$, via the adjoint representation; in other words, we identify the abstract group $G$ with the linear group $\Ad G \subset \GL(\mathfrak{g})$. Let $\mathfrak{g}_{\Aff}$ be the affine space corresponding to $\mathfrak{g}$. The group of affine transformations of $\mathfrak{g}_{\Aff}$ whose linear part lies in~$G$ may then be written $G \ltimes \mathfrak{g}$ (where $\mathfrak{g}$ stands for the group of translations).

\begin{remark}
As $\mathfrak{g}$ is the tangent space to $G$ at the neutral element, the underlying space of the group $G \ltimes \mathfrak{g}$ is actually the tangent bundle $TG$. In particular, if $\Gamma$ is some abstract group, any representation $\rho_{\Aff}: \Gamma \to G \ltimes \mathfrak{g}$ can be seen as an infinitesimal deformation of the representation $\rho: \Gamma \to G$ corresponding to its linear part. This paper makes no use of this remark; see however the work of Danciger, Gu\'eritaud and Kassel \cite{DGK15, DGKpre} for a lot of interesting results derived from this idea.
\end{remark}

\subsection{Extended affine space and dynamical subspaces}
\label{sec:affine}
We begin with a few definitions.

We choose once and for all a point of $\mathfrak{g}_{\Aff}$ that we take as an origin; we call $\mathbb{R}_0$ the one-dimensional vector space formally generated by this point, and we set $\hat{\mathfrak{g}} := \mathfrak{g} \oplus \mathbb{R}_0$ the \emph{extended affine space} corresponding to~$\mathfrak{g}$. Then $\mathfrak{g}_{\Aff}$ is the affine hyperplane "at height~1" of this space, and $\mathfrak{g}$ is the corresponding vector hyperplane:
\[
\mathfrak{g}       = \mathfrak{g} \times \{0\} \subset \mathfrak{g} \times \mathbb{R}_0; \qquad
\mathfrak{g}_{\Aff} = \mathfrak{g} \times \{1\} \subset \mathfrak{g} \times \mathbb{R}_0.
\]
Any affine map $g$ with linear part $\ell(g)$ and translation vector $v$, defined on $\mathfrak{g}_{\Aff}$ by
\[g: x \mapsto \ell(g)(x)+v,\]
can be extended in a unique way to a linear map defined on $\hat{\mathfrak{g}}$, given by the matrix
\[\begin{pmatrix} \ell(g) & v \\ 0 & 1 \end{pmatrix}.\]
This gives a natural action of the affine group $G \ltimes \mathfrak{g}$ on the vector space $\hat{\mathfrak{g}}$.

We define an \emph{extended affine subspace} of $\hat{\mathfrak{g}}$ to be a vector subspace of $\hat{\mathfrak{g}}$ not contained in $\mathfrak{g}$. There is a one-to-one correspondence between extended affine subspaces of $\hat{\mathfrak{g}}$ and affine subspaces of $\mathfrak{g}_{\Aff}$ of dimension one less. For any extended affine subspace $A$ (or $A_1$, $A_g$ etc.), we denote by $V$ (or $V_1$, $V_g$ etc.) the space $A \cap \mathfrak{g}$ (which is the linear part of the corresponding affine space $A \cap \mathfrak{g}_{\Aff}$).

By abuse of terminology, elements of the normal subgroup $\mathfrak{g} \lhd G \ltimes \mathfrak{g}$ will still be called \emph{translations}, even though we shall see them mostly as endomorphisms of $\hat{\mathfrak{g}}$ (so that they are formally transvections). For any vector $v \in \mathfrak{g}$, we write $\tau_{v}$ the corresponding translation.

For every $g \in G \ltimes \mathfrak{g}$, we decompose $\hat{\mathfrak{g}}$ into a direct sum of three spaces
\[\hat{\mathfrak{g}} = V^{\subg}_{g} \oplus A^{\sube}_{g} \oplus V^{\subl}_{g},\]
called \emph{dynamical subspaces} of $g$, that are all stable by $g$ and such that all eigenvalues $\lambda$ of the restriction of $g$ to $V^{\subg}_{g}$ (resp. $A^{\sube}_{g}$, $V^{\subl}_{g}$) satisfy $|\lambda| > 1$ (resp. $|\lambda| = 1$, $|\lambda| < 1$). We also define $A^{\subge}_{g} := V^{\subg}_{g} \oplus A^{\sube}_{g}$ and $A^{\suble}_{g} := V^{\subl}_{g} \oplus A^{\sube}_{g}$.

In this case, we have of course $V^{\subg}_{g} \subset \mathfrak{g}$ and $V^{\subl}_{g} \subset \mathfrak{g}$ but $A^{\sube}_{g} \not\subset \mathfrak{g}$ (which justifies the choice of the letters $A$ and $V$). It follows that
\[\mathfrak{g} = V^{\subg}_{g} \oplus V^{\sube}_{g} \oplus V^{\subl}_{g},\]
where $V^{\sube}_g$ means $A^{\sube}_g \cap \mathfrak{g}$ according to our convention.

\begin{definition}
\label{Rregular}
An element $g \in G \ltimes \mathfrak{g}$ is said to be \emph{$\mathbb{R}$-regular} if its linear part is $\mathbb{R}$-regular, \ie if the dimension of the space $A^{\sube}_{g}$ (or of its linear part $V^{\sube}_{g}$) is the lowest possible.
\end{definition}
By contrast, when $g$ is a translation, we have $V^{\subg}_{g} = V^{\subl}_{g} = 0$ and $A^{\sube}_{g}$ (resp. $V^{\sube}_{g}$) is the whole space $\hat{\mathfrak{g}}$ (resp. $\mathfrak{g}$).

\subsection{Lie algebra structure}
\label{sec:lie}

Now we introduce a few classical subalgebras of $\mathfrak{g}$ (defined for instance in Knapp's book \cite{Kna96}, though our terminology and notation differ slightly from his). Their value is that if an element $g \in G \ltimes \mathfrak{g}$ is $\mathbb{R}$-regular, then its dynamical subspaces are, up to conjugacy, equal to some of these subalgebras (see Corollary \ref{dynamical_spaces_description}).

We choose in $\mathfrak{g}$:
\begin{itemize}
\item a Cartan involution $\theta$. Then we have the corresponding Cartan decomposition $\mathfrak{g} = \mathfrak{k} \oplus \mathfrak{q}$, where we call $\mathfrak{k}$ the space of fixed points of $\theta$ and $\mathfrak{q}$ the space of fixed points of $-\theta$. We call $K$ the maximal compact subgroup with Lie algebra $\mathfrak{k}$.
\item a \emph{Cartan subspace} $\mathfrak{a}$ compatible with $\theta$ (that is, a maximal abelian subalgebra of $\mathfrak{g}$ among those contained in $\mathfrak{q}$). We set $A := \exp \mathfrak{a}$.
\item a system $\Sigma^+$ of positive restricted roots in $\mathfrak{a}^*$. Recall that a \emph{restricted root} is a nonzero element $\alpha \in \mathfrak{a}^*$ such that the root space
\[\mathfrak{g}_\alpha := \setsuch{Y \in \mathfrak{g}}{\forall X \in \mathfrak{a},\; [X, Y] = \alpha(X)Y}\]
is nontrivial. They form a root system $\Sigma$; a system of positive roots $\Sigma^+$ is a subset of $\Sigma$ contained in a half-space and such that $\Sigma = \Sigma^+ \sqcup -\Sigma^+$. We call
\[\mathfrak{a}^+ := \setsuch{X \in \mathfrak{a}}{\forall \alpha \in \Sigma^+,\; \alpha(X) > 0}\]
the corresponding (open) Weyl chamber of $\mathfrak{a}$.
\end{itemize}
Then we call:
\begin{itemize}
\item $M$ the centralizer of $\mathfrak{a}$ in $K$, $\mathfrak{m}$ its Lie algebra.
\item $L$ the centralizer of $\mathfrak{a}$ in $G$, $\mathfrak{l}$ its Lie algebra. It is clear that $\mathfrak{l} = \mathfrak{a} \oplus \mathfrak{m}$, and well known (see \eg \cite{Kna96}, Proposition 7.82a) that $L = MA$.
\item $\mathfrak{n}^+$ (resp. $\mathfrak{n}^-$) the sum of the restricted root spaces of $\Sigma^+$ (resp. of $-\Sigma^+$).
\item $\mathfrak{p}^+ := \mathfrak{l} \oplus \mathfrak{n}^+$ and $\mathfrak{p}^- := \mathfrak{l} \oplus \mathfrak{n}^-$ the corresponding minimal parabolic algebras.
\item $\hat{\mathfrak{l}}$, $\hat{\mathfrak{p}}^+$ and $\hat{\mathfrak{p}}^-$ the vector extensions of the affine subspaces of $\mathfrak{g}_{\Aff}$ parallel respectively to $\mathfrak{l}$, $\mathfrak{p}^+$ and $\mathfrak{p}^-$ and passing through the origin. In other words:
\[\hat{\mathfrak{l}} := \mathfrak{l} \oplus \mathbb{R}_0 \text{ and } \hat{\mathfrak{p}}^\pm := \mathfrak{p}^\pm \oplus \mathbb{R}_0.\]
\end{itemize}

It is convenient for us to define a \emph{minimal parabolic algebra} (abbreviated as m.p.a. in the sequel) in $\mathfrak{g}$ as the image of $\mathfrak{p}^+$ (or $\mathfrak{p}^-$) by any element of~$G$. Similarly, we define an \emph{affine m.p.a.} in $\hat{\mathfrak{g}}$ as the image of $\hat{\mathfrak{p}}^+$ (or $\hat{\mathfrak{p}}^-$) by any element of $G \ltimes \mathfrak{g}$. Equivalently, a subspace $\hat{\mathfrak{p}}_1 \subset \hat{\mathfrak{g}}$ is an affine m.p.a. iff it is not contained in $\mathfrak{g}$ and its linear part $\hat{\mathfrak{p}}_1 \cap \mathfrak{g}$ is a m.p.a.

We say that two m.p.a.'s (resp. affine m.p.a.'s) are \emph{transverse} if their intersection has the lowest possible dimension (namely $\dim \mathfrak{l}$, resp. $\dim \mathfrak{l} + 1$).

\begin{example} An important special case is $G = \PSL_n(\mathbb{R})$. In this case we may take as~$\theta$ the involution $X \mapsto -\transpose{X}$, as $\mathfrak{a}$ the set of all (traceless) diagonal matrices, and as~$\Sigma^+$ the set of all roots $e_i - e_j$ such that $i < j$. Then:
\begin{itemize}
\item $\mathfrak{k}$ (resp. $\mathfrak{q}$) is the set of traceless antisymmetric (resp. symmetric) matrices, and $K = \PSO_n(\mathbb{R})$;
\item $\mathfrak{a}^+$ is the set of traceless matrices of the form $\Diag(\lambda_1, \ldots, \lambda_n)$ with $\lambda_1 > \cdots > \lambda_n$;
\item $\mathfrak{m}$ is trivial; $\mathfrak{l}$ is equal to $\mathfrak{a}$; $A$ (resp. $M$, $L$) is the group of diagonal matrices with determinant~1 whose coefficients are positive (resp. equal to $\pm 1$, arbitrary);
\item $\mathfrak{n}^+$ (resp. $\mathfrak{n}^-$) is the set of traceless upper (resp. lower) triangular matrices with vanishing diagonal coefficients;
\item $\mathfrak{p}^+$ (resp. $\mathfrak{p}^-$) is the set of all traceless upper (resp. lower) triangular matrices.
\end{itemize}
\end{example}
\begin{example} Another interesting example is $G = \PSO^+(n, 1)$, so that
\[\mathfrak{g} = \mathfrak{so}(n, 1) =
\setsuch{
\left(\begin{array}{cc|c}
\multicolumn{2}{c|}{\multirow{2}{*}{$A$}} & \multirow{2}{*}{$B$}\\
 & & \\
\hline
\multicolumn{2}{c|}{\transpose{B}} & 0
\end{array}\right)
}{B \in \mathbb{R}^n,\; A = -\transpose{A}}.\]
In this case we may take:
\begin{itemize}
\item as $\theta$ the map
\[\theta:
\left(\begin{array}{cc|c}
\multicolumn{2}{c|}{\multirow{2}{*}{$A$}} & \multirow{2}{*}{$B$}\\
 & & \\
\hline
\multicolumn{2}{c|}{\transpose{B}} & 0
\end{array}\right)
  \mapsto
\left(\begin{array}{cc|c}
\multicolumn{2}{c|}{\multirow{2}{*}{$A$}} & \multirow{2}{*}{$-B$}\\
 & & \\
\hline
\multicolumn{2}{c|}{-\transpose{B}} & 0
\end{array}\right);
\]
\item as $\mathfrak{a}$ the line $\mathbb{R}X$ generated by the vector
\[X := \left(\begin{array}{cccc|c}
\multicolumn{4}{c|}{\multirow{4}{*}{$0$}} & 0\\
 & & & & \vdots \\
 & & & & 0 \\
 & & & & 1 \\
\hline
0 & \dots & 0 & 1 & 0
\end{array}\right);\]
\item as $\Sigma^+$ the unique restricted root which is positive on $X$.
\end{itemize}
With these choices:

\begin{itemize}
\addtolength{\itemsep}{.5\baselineskip}
\item
$\mathfrak{k} = \left\{\left(\begin{array}{cc|c}
\multicolumn{2}{c|}{\multirow{2}{*}{$*$}} & \multirow{2}{*}{$0$}\\
 & & \\
\hline
\multicolumn{2}{c|}{0} & 0
\end{array}\right)\right\} \cap \mathfrak{g}$; \;
$\mathfrak{q} = \left\{\left(\begin{array}{cc|c}
\multicolumn{2}{c|}{\multirow{2}{*}{$0$}} & \multirow{2}{*}{$*$}\\
 & & \\
\hline
\multicolumn{2}{c|}{*} & 0
\end{array}\right)\right\} \cap \mathfrak{g}$; \;
$K \simeq \PSO_n(\mathbb{R})$;
\item $\mathfrak{a}^+$ is the ray formed by positive multiples of $X$;
\item
$\mathfrak{m} = \setsuch{\left(\begin{array}{ccc:c|c}
\multicolumn{3}{c:}{\multirow{3}{*}{$A'$}} & 0 & 0\\
  &       &   & \vdots & \vdots \\
  &       &   & 0 & 0 \\
\hdashline
0 & \dots & 0 & 0 & 0 \\
\hline
0 & \dots & 0 & 0 & 0
\end{array}\right)}
{A' = - \transpose{(A')}}
\simeq \mathfrak{so}_{n-1}(\mathbb{R})$;
\item $A = \setsuch{\exp(t X)}{t \in \mathbb{R}}$;\; $M \simeq \PSO_{n-1}(\mathbb{R})$ turns out to be connected in this case;\; $L$ is the direct product of $A$ and $M$.
\item
$\mathfrak{n}^+ = \setsuch{\left(\begin{array}{ccc:c|c}
\multicolumn{3}{c:}{\multirow{2}{*}{$0$}} & \multirow{2}{*}{$-B'$} & \multirow{2}{*}{$B'$}\\
  &    & & \\
\hdashline
\multicolumn{3}{c:}{\transpose{(B')}} & 0 & 0 \\
\hline
\multicolumn{3}{c:}{\transpose{(B')}} & 0 & 0
\end{array}\right)}
{B' \in \mathbb{R}^{n-1}}$;
\item
$\mathfrak{n}^- = \setsuch{\left(\begin{array}{cc:c|c}
\multicolumn{2}{c:}{\multirow{2}{*}{$0$}} & \multirow{2}{*}{$B'$} & \multirow{2}{*}{$B'$}\\
  &    & & \\
\hdashline
\multicolumn{2}{c:}{-\transpose{(B')}} & 0 & 0 \\
\hline
\multicolumn{2}{c:}{\transpose{(B')}} & 0 & 0
\end{array}\right)}
{B' \in \mathbb{R}^{n-1}}$.
\end{itemize}
\end{example}
Note that $G = \PSO^+(2,1) = \SO^+(2,1) \simeq \PSL_2(\mathbb{R})$ is a particular case of both examples.

\subsection{Basic algebraic properties}
\label{sec:algebraic}

We have the following algebraic facts.
\begin{claim}
\label{Asubge_is_ampa}
Let $g \in G \ltimes \mathfrak{g}$.
\begin{enumerate}[(i)]
\item The map $g$ is $\mathbb{R}$-regular iff it is conjugate (by an element of $G \ltimes \mathfrak{g}$) to a product $\tau_v m \exp(a)$ with $v \in \mathfrak{l}$, $m \in M$ and $a \in \mathfrak{a}^+$ (here we identify the subgroup of the affine group $G \ltimes \mathfrak{g}$ fixing the "origin"~$\mathbb{R}_0$ with the linear group $G$).
\item In that case, $A^{\subge}_{g}$ and $A^{\suble}_{g}$ are transverse affine m.p.a.'s.
\item Moreover, in that case $V^{\subg}_{g}$ (resp. $V^{\subl}_{g}$) is uniquely determined by~$A^{\subge}_{g}$ (resp. by $A^{\suble}_{g}$), as the nilradical of its linear part.
\end{enumerate}
\end{claim}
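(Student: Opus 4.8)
The plan is to reduce everything to the model element $g_0 = \tau_v m \exp(a)$ with $v \in \mathfrak{l}$, $m \in M$, $a \in \mathfrak{a}^+$, then compute its dynamical subspaces explicitly using the restricted root space decomposition, and finally transport the conclusions back by conjugation. For part (i), recall that an element of $G$ is $\mathbb{R}$-regular iff it is conjugate to $m \exp(a)$ with $m \in M$, $a \in \mathfrak{a}^+$ (this is the standard characterization of $\mathbb{R}$-regular elements in a semisimple Lie group, via the Jordan decomposition into elliptic times hyperbolic parts). Since $\mathbb{R}$-regularity of $g \in G \ltimes \mathfrak{g}$ depends only on its linear part $\ell(g)$, and conjugation in $G \ltimes \mathfrak{g}$ induces conjugation on linear parts, one direction is immediate. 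For the converse: if $\ell(g)$ is $\mathbb{R}$-regular, conjugate so that $\ell(g) = m \exp(a)$ is in standard form; then the translation part $w$ of $g$ decomposes along $\mathfrak{g} = V^{\subg}_g \oplus V^{\sube}_g \oplus V^{\subl}_g = \mathfrak{n}^+ \oplus \mathfrak{l} \oplus \mathfrak{n}^-$ (using the forthcoming description of dynamical spaces, or computing it directly here), and since $\ell(g) - \Id$ is invertible on $\mathfrak{n}^+ \oplus \mathfrak{n}^-$, we can kill the $\mathfrak{n}^\pm$-components of $w$ by conjugating by a translation $\tau_u$ with $u \in \mathfrak{n}^+ \oplus \mathfrak{n}^-$; this leaves $g$ conjugate to $\tau_v m \exp(a)$ with $v \in \mathfrak{l}$.

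For part (ii), work directly with the model $g_0$ acting on $\hat{\mathfrak{g}} = \mathfrak{g} \oplus \mathbb{R}_0$. The linear part $m\exp(a)$ has eigenvalue modulus $e^{\alpha(a)}$ on each root space $\mathfrak{g}_\alpha$ (since $m \in M$ commutes with $\mathfrak{a}$ and acts by an orthogonal, hence modulus-one, transformation on each $\mathfrak{g}_\alpha$, and acts trivially on $\mathfrak{a}$), while on $\mathbb{R}_0$ the extended map $g_0$ has eigenvalue $1$ and on $\mathfrak{l} \oplus \mathbb{R}_0$ it acts unipotently-times-elliptic because the translation $\tau_v$ with $v \in \mathfrak{l}$ only adds a nilpotent block coupling $\mathbb{R}_0$ to $\mathfrak{l}$. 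Hence, since $\alpha(a) > 0$ for $\alpha \in \Sigma^+$:
\[
V^{\subg}_{g_0} = \mathfrak{n}^+, \qquad A^{\sube}_{g_0} = \mathfrak{l} \oplus \mathbb{R}_0 = \hat{\mathfrak{l}}, \qquad V^{\subl}_{g_0} = \mathfrak{n}^-.
\]
Therefore $A^{\subge}_{g_0} = \mathfrak{n}^+ \oplus \hat{\mathfrak{l}} = \hat{\mathfrak{p}}^+$ and $A^{\suble}_{g_0} = \mathfrak{n}^- \oplus \hat{\mathfrak{l}} = \hat{\mathfrak{p}}^-$, which are transverse affine m.p.a.'s by definition (their intersection is exactly $\hat{\mathfrak{l}}$, of dimension $\dim\mathfrak{l}+1$). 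For a general $\mathbb{R}$-regular $g = h g_0 h^{-1}$, the dynamical subspaces transform equivariantly, $A^{\subge}_g = h \cdot A^{\subge}_{g_0}$ and $A^{\suble}_g = h \cdot A^{\suble}_{g_0}$, and since $h \in G \ltimes \mathfrak{g}$ carries affine m.p.a.'s to affine m.p.a.'s and preserves transversality, the conclusion follows.

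For part (iii), note that $A^{\subge}_g$ determines its linear part $V^{\subge}_g = A^{\subge}_g \cap \mathfrak{g}$, which in the model case is $\mathfrak{p}^+$; and $\mathfrak{n}^+$ is intrinsically recoverable from $\mathfrak{p}^+$ as its nilradical (the largest nilpotent ideal), a purely Lie-theoretic invariant. Since $V^{\subg}_g = h \cdot \mathfrak{n}^+ = h \cdot \operatorname{nilrad}(\mathfrak{p}^+) = \operatorname{nilrad}(h \cdot \mathfrak{p}^+) = \operatorname{nilrad}(V^{\subge}_g)$, it depends only on $A^{\subge}_g$; symmetrically for $V^{\subl}_g$ and $A^{\suble}_g$. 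The main obstacle I anticipate is the careful bookkeeping in part (ii): one must check that adding the translation $\tau_v$ with $v \in \mathfrak{l}$ genuinely keeps all eigenvalue moduli on $\hat{\mathfrak{l}}$ equal to $1$ (it does, since the only new off-diagonal entries land in the $\mathbb{R}_0 \to \mathfrak{l}$ block and $\mathfrak{a}$ acts trivially on $\mathbb{R}_0$), and that no eigenvalue of modulus one sneaks into $\mathfrak{n}^\pm$ (excluded precisely by $\mathbb{R}$-regularity, i.e.\ $\alpha(a) \neq 0$ for all roots $\alpha$); making the "modulus on $\mathfrak{g}_\alpha$ equals $e^{\alpha(a)}$" claim rigorous for nontrivial $M$ requires invoking that $M$ is compact so acts with unitarizable, modulus-one eigenvalues.
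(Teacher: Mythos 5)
Your proposal is correct and follows essentially the same route as the paper: reduce via the Jordan decomposition to the model element $\tau_v m \exp(a)$, compute its dynamical subspaces explicitly from the restricted root space decomposition, and transport the conclusions by conjugation. The only cosmetic differences are that the paper derives the characterization of $\mathbb{R}$-regular elements of $G$ from scratch rather than citing it, and normalizes the translation part by arranging that $A^{\sube}_g$ pass through the origin, whereas you kill the $\mathfrak{n}^{+}\oplus\mathfrak{n}^{-}$-component of the translation vector by conjugating by a translation (using invertibility of $\ell(g)-\Id$ there) --- these are equivalent.
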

\begin{proof} \mbox{ }
\begin{enumerate}[(i)]
\item Let us show that for any $g \in G \ltimes \mathfrak{g}$, we have $\dim A^{\sube}_g \geq \dim \hat{\mathfrak{l}}$, with equality (\ie $\mathbb{R}$-regularity of $g$) iff $g$ has the required form.

Using the Jordan decomposition (see \eg \cite{Ebe96}, Theorem 2.19.24), we may decompose~$g$ in a unique way as a product $g = \tau_v g_h g_e g_u$, where $v$ is some vector in $\mathfrak{g}$, $g_h \in G$ is hyperbolic (semisimple with positive real eigenvalues), $g_e \in G$ is elliptic (semisimple with eigenvalues of modulus~1), $g_u \in G$ is unipotent (some power of~$g_u - \Id$ is zero), and the last three maps commute with each other. Up to conjugation, we may suppose that $A^\sube_g$ passes through the origin $\mathbb{R}_0$, which means that $v \in V^\sube_g$ (in fact we could even assume that $v$ belongs to the actual $1$-eigenspace).

Since $g_e$ and $g_u$ have all eigenvalues of modulus~1 and commute with $g_h$, we have $A^{\sube}_g = A^{\sube}_{g_h}$. Up to conjugation, we may suppose that $g_h = \exp a$ where $a$ is an element of $\mathfrak{a}$, and even more specifically, of the closure of $\mathfrak{a}^+$. Then clearly the space $A^{\sube}_{g_h}$ is the sum of~$\hat{\mathfrak{l}}$ and of any restricted root spaces $\mathfrak{g}_\alpha$ such that the value $\alpha(a)$ happens to vanish. This shows that $A^{\sube}_{g_h}$ contains $\hat{\mathfrak{l}}$, and that equality occurs iff $a \in \mathfrak{a}^+$.

Clearly if $g$ has the required form, then by uniqueness of the Jordan decomposition we have $g_h = \exp a$, $g_e = m$ and $g_u = 1$, so that $a \in \mathfrak{a}^+$. Conversely, suppose that $a \in \mathfrak{a}^+$; let us show that $g$ has the required form. We start with the observation that any two distinct Weyl chambers are always disjoint; thus the only conjugate of $\mathfrak{a}^+$ containing $a$ is $\mathfrak{a}^+$ itself. It follows that $Z_G(a) = Z_G(\mathfrak{a}) = L$. Then $g_e$ is an elliptic element of $L$, hence an element of $M$; $g_u$ is a unipotent element of $L$, hence equal to~1; and $v \in V^\sube_g = \mathfrak{l}$. It follows that $g = \tau_v g_e \exp a$ with $v \in \mathfrak{l}$, $g_e \in M$ and $a \in \mathfrak{a}^+$, as required.

\item By the previous point, up to conjugation, we may suppose that $g = \tau_v m \exp a$ with $v \in \mathfrak{l}$, $m \in M$ and $a \in \mathfrak{a}^+$. But then clearly $A^\subge_g = A^\subge_{\exp a} = \hat{\mathfrak{p}}^+$ and similarly $A^\suble_g = \hat{\mathfrak{p}}^-$.

\item If $g$ is of the form $\tau_v m \exp a$, then, similarly, we have $V^\subg_g = \mathfrak{n}^+$; and we know that $\mathfrak{n}^+$ is the nilradical (largest nilpotent ideal) of $\mathfrak{p}^+$. Hence for any $\mathbb{R}$-regular $g$, $V^\subg_g$ is the nilradical of $V^\subge_g$, which is the linear part of $A^\subge_g$ (in other words $V^\subge_g = A^\subge_g \cap \mathfrak{g}$). Similarly, $V^\subl_g$ is the nilradical of the linear part of $A^\suble_g$. \qedhere
\end{enumerate}
\end{proof}
\begin{corollary}
\label{dynamical_spaces_description}
For every $\mathbb{R}$-regular map $g \in G \ltimes \mathfrak{g}$, there is a "canonizing" map $\phi \in G \ltimes \mathfrak{g}$ such that:
\begin{align*}
& & \phi(V^\sube_g)  &= \mathfrak{l}   & \phi(A^\sube_g)  &= \hat{\mathfrak{l}}\\
\phi(V^\subg_g)  &= \mathfrak{n}^+ & \phi(V^\subge_g) &= \mathfrak{p}^+ & \phi(A^\subge_g) &= \hat{\mathfrak{p}}^+\\
\phi(V^\subl_g)  &= \mathfrak{n}^- & \phi(V^\suble_g) &= \mathfrak{p}^- & \phi(A^\suble_g) &= \hat{\mathfrak{p}}^-;
\end{align*}
and any map $\phi$ satisfying the last two equalities on the right satisfies all eight of them.
\end{corollary}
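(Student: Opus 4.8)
The plan is to get existence from Claim~\ref{Asubge_is_ampa}(i) together with the naturality of dynamical subspaces under conjugation, and to get the ``moreover'' clause by elementary manipulations of the dynamical subspaces, using Claim~\ref{Asubge_is_ampa}(iii) for the two nilradical identifications.

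\emph{Existence.} By Claim~\ref{Asubge_is_ampa}(i) there is $\psi \in G \ltimes \mathfrak{g}$ with $h := \psi g \psi^{-1} = \tau_v m \exp a$ for some $v \in \mathfrak{l}$, $m \in M$, $a \in \mathfrak{a}^+$. The computations in the proofs of Claim~\ref{Asubge_is_ampa}(ii)--(iii) already show $A^\subge_h = \hat{\mathfrak{p}}^+$, $A^\suble_h = \hat{\mathfrak{p}}^-$, $V^\subg_h = \mathfrak{n}^+$ and $V^\subl_h = \mathfrak{n}^-$; intersecting these (and intersecting with $\mathfrak{g}$) yields the remaining four equalities for $h$, since $\hat{\mathfrak{p}}^+ \cap \hat{\mathfrak{p}}^- = \hat{\mathfrak{l}}$ by the restricted root space decomposition. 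Now for any $g$-stable subspace $W$, the subspace $\psi(W)$ is $h$-stable and $h|_{\psi(W)} = \psi \circ (g|_W) \circ \psi^{-1}$ has the same eigenvalues as $g|_W$; since the dynamical decomposition is the unique decomposition into $g$-stable summands grouped by eigenvalue modulus $>1$, $=1$, $<1$, the map $\psi$ carries each dynamical subspace of $g$ onto the corresponding one of $h$. Hence $\phi := \psi$ satisfies all eight equalities.

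\emph{The ``moreover'' clause.} Let $\phi \in G \ltimes \mathfrak{g}$ satisfy $\phi(A^\subge_g) = \hat{\mathfrak{p}}^+$ and $\phi(A^\suble_g) = \hat{\mathfrak{p}}^-$. From $\hat{\mathfrak{g}} = V^\subg_g \oplus A^\sube_g \oplus V^\subl_g$ one reads off $A^\sube_g = A^\subge_g \cap A^\suble_g$, so $\phi(A^\sube_g) = \hat{\mathfrak{p}}^+ \cap \hat{\mathfrak{p}}^- = \hat{\mathfrak{l}}$. Viewed as a linear endomorphism of $\hat{\mathfrak{g}}$, the map $\phi$ preserves the vector hyperplane $\mathfrak{g}$ (its matrix has last row $(0,\dots,0,1)$); intersecting the three affine equalities with $\mathfrak{g}$ therefore gives $\phi(V^\subge_g) = \mathfrak{p}^+$, $\phi(V^\sube_g) = \mathfrak{l}$, $\phi(V^\suble_g) = \mathfrak{p}^-$. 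Finally $\phi|_{\mathfrak{g}}$ equals the linear part $\ell(\phi) \in \Ad G$, which is a Lie algebra automorphism of $\mathfrak{g}$ and hence maps the nilradical of a subalgebra onto the nilradical of its image; combining this with Claim~\ref{Asubge_is_ampa}(iii) (which identifies $V^\subg_g$, resp.\ $V^\subl_g$, with the nilradical of $V^\subge_g$, resp.\ $V^\suble_g$) and the fact that $\mathfrak{n}^\pm$ is the nilradical of $\mathfrak{p}^\pm$, we conclude $\phi(V^\subg_g) = \mathfrak{n}^+$ and $\phi(V^\subl_g) = \mathfrak{n}^-$. All eight equalities are thus established.

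\emph{Expected difficulty.} There is no serious obstacle here; the only points requiring care are bookkeeping ones: fixing the direction of conjugation so that $\psi$ (not $\psi^{-1}$) is the canonizing map, and noting that $\phi$ restricted to $\mathfrak{g}$ is a genuine Lie algebra automorphism so that the nilradical argument applies --- the translation part of $\phi$ is irrelevant to this step because $V^\subg_g$ and $V^\subl_g$ are linear subspaces contained in $\mathfrak{g}$.
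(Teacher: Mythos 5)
Your proof is correct and follows the route the paper intends: the corollary is stated without proof as an immediate consequence of Claim~\ref{Asubge_is_ampa}, and your argument (conjugation-equivariance of the dynamical decomposition for existence; intersection with $\hat{\mathfrak{p}}^+\cap\hat{\mathfrak{p}}^-=\hat{\mathfrak{l}}$ and with $\mathfrak{g}$, plus the nilradical characterization of Claim~\ref{Asubge_is_ampa}(iii), for the ``moreover'' clause) is exactly the intended filling-in of that gap. No issues.
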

\begin{example}~
\begin{itemize}
\item For $G = \PSL_n(\mathbb{R})$, a map $g \in G \ltimes \mathfrak{g}$ is $\mathbb{R}$-regular iff its linear part, seen as an automorphism of $\mathbb{R}^n$ (not of $\mathfrak{g}$ as by our usual convention), has real eigenvalues with distinct absolute values.
\item For $G = \PSO^+(n, 1)$, a map $g \in G \ltimes \mathfrak{g}$ is $\mathbb{R}$-regular iff its linear part, seen as an isometry of the hyperbolic space $\mathbb{H}^n$, is \emph{loxodromic} (acts on the ideal boundary with exactly two fixed points).
\end{itemize}
\end{example}
\begin{claim}
\label{pair_transitivity}
Any pair of transverse m.p.a.'s (resp. transverse affine m.p.a.'s) may be sent to $(\mathfrak{p}^+, \mathfrak{p}^-)$ (resp. $(\hat{\mathfrak{p}}^+, \hat{\mathfrak{p}}^-)$) by some element of $G$ (resp. of $G \ltimes \mathfrak{g}$).
\end{claim}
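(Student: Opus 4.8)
The plan is to establish the statement for m.p.a.'s first and then bootstrap to the affine case. For the linear statement, recall that $\mathfrak{p}^-$ is conjugate to $\mathfrak{p}^+$ under $G$ (by any representative in $N_K(\mathfrak{a})$ of the longest Weyl group element, which sends $\Sigma^+$ to $-\Sigma^+$ and hence $\mathfrak{n}^+$ to $\mathfrak{n}^-$), so \emph{every} m.p.a.\ is of the form $\Ad(g)\mathfrak{p}^+$ for some $g \in G$. Hence, given a transverse pair $(\mathfrak{q}_1,\mathfrak{q}_2)$ of m.p.a.'s, I would first choose $g_1 \in G$ with $\Ad(g_1)\mathfrak{q}_1 = \mathfrak{p}^+$; since being transverse is a $G$-invariant condition (it only involves the dimension of an intersection), $\mathfrak{q} := \Ad(g_1)\mathfrak{q}_2$ is an m.p.a.\ transverse to $\mathfrak{p}^+$.

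The crucial point is then that the stabilizer $P^+$ of $\mathfrak{p}^+$ in $G$, which equals $L \cdot N^+$ with $N^+ := \exp \mathfrak{n}^+$, acts transitively on the set of m.p.a.'s transverse to $\mathfrak{p}^+$; in fact $n \mapsto \Ad(n)\mathfrak{p}^-$ is already a bijection from $N^+$ onto this set. This is the classical "big cell" statement: the multiplication map $N^+ \times P^- \to G$ is an open embedding, and $\Ad(g)\mathfrak{p}^-$ is transverse to $\mathfrak{p}^+$ if and only if $g$ lies in the open cell $N^+ P^-$ (see e.g.\ \cite{Kna96}). Writing $\mathfrak{q} = \Ad(n)\mathfrak{p}^-$ with $n \in N^+$, the element $g := n^{-1} g_1$ then sends $\mathfrak{q}_1$ to $\Ad(n^{-1})\mathfrak{p}^+ = \mathfrak{p}^+$ (as $n \in N^+ \subseteq P^+$) and $\mathfrak{q}_2$ to $\Ad(n^{-1})\mathfrak{q} = \mathfrak{p}^-$, which settles the linear case.

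For the affine case, let $(\hat{\mathfrak{q}}_1, \hat{\mathfrak{q}}_2)$ be a transverse pair of affine m.p.a.'s. Their linear parts $\mathfrak{q}_1,\mathfrak{q}_2$ are then transverse m.p.a.'s: the intersection $\hat{\mathfrak{q}}_1 \cap \hat{\mathfrak{q}}_2$, being an honest affine subspace (i.e.\ not contained in $\mathfrak{g}$) of the minimal dimension $\dim\mathfrak{l}+1$, has linear part $\mathfrak{q}_1 \cap \mathfrak{q}_2$ of dimension $\dim\mathfrak{l}$. By the linear case, pick $g_0 \in G$ sending $\mathfrak{q}_1$ to $\mathfrak{p}^+$ and $\mathfrak{q}_2$ to $\mathfrak{p}^-$; regarding $g_0$ as an element of $G \ltimes \mathfrak{g}$ (it then acts on $\hat{\mathfrak{g}}$ fixing $\mathbb{R}_0$, and therefore sends the linear part of $\hat{\mathfrak{q}}_i$ to $\Ad(g_0)\mathfrak{q}_i$) and replacing $(\hat{\mathfrak{q}}_1,\hat{\mathfrak{q}}_2)$ by its image under $g_0$, I may assume the linear parts are exactly $\mathfrak{p}^+$ and $\mathfrak{p}^-$. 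Now $\hat{\mathfrak{q}}_1$ corresponds to an affine subspace $w_1 + \mathfrak{p}^+$ of $\mathfrak{g}_{\Aff}$ and $\hat{\mathfrak{q}}_2$ to $w_2 + \mathfrak{p}^-$, for some $w_1, w_2 \in \mathfrak{g}$. It remains to find $v \in \mathfrak{g}$ with $w_1 + v \in \mathfrak{p}^+$ and $w_2 + v \in \mathfrak{p}^-$ (so that $\tau_v$ sends $\hat{\mathfrak{q}}_1$ to $\hat{\mathfrak{p}}^+$ and $\hat{\mathfrak{q}}_2$ to $\hat{\mathfrak{p}}^-$). Such a $v$ exists precisely when $w_1 - w_2 \in \mathfrak{p}^+ + \mathfrak{p}^-$, and this always holds since $\mathfrak{p}^+ + \mathfrak{p}^- = \mathfrak{n}^- \oplus \mathfrak{l} \oplus \mathfrak{n}^+ = \mathfrak{g}$. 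The composite $\tau_v g_0$ is then the required element of $G \ltimes \mathfrak{g}$.

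I expect the only genuine obstacle to be the big-cell fact invoked in the second paragraph — that $N^+$ (equivalently $P^+$) acts transitively on the m.p.a.'s transverse to $\mathfrak{p}^+$; it is entirely classical, but making it self-contained would require a little structure theory (the Bruhat decomposition of $G$ relative to $P^+$, or, equivalently, the fact that for a transverse pair of m.p.a.'s with nilradicals $\mathfrak{n}_1,\mathfrak{n}_2$ and common Levi $\mathfrak{l}' = \mathfrak{q}_1 \cap \mathfrak{q}_2$ one has $\mathfrak{g} = \mathfrak{n}_1 \oplus \mathfrak{l}' \oplus \mathfrak{n}_2$, which lets one conjugate directly). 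Everything else is either bookkeeping with the dynamical subspaces or the one-line linear-algebra identity $\mathfrak{p}^+ + \mathfrak{p}^- = \mathfrak{g}$.
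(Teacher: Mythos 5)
Your proof is correct and follows essentially the same route as the paper's: the paper applies the Bruhat decomposition to write $\phi_1^{-1}\phi_2 = p_1 w p_2$ and observes that transversality forces $w = w_0$, which is exactly the big-cell fact (that $\Ad(g)\mathfrak{p}^-$ is transverse to $\mathfrak{p}^+$ iff $g \in N^+P^-$) you invoke. The only difference is that you spell out the affine step via $\mathfrak{p}^+ + \mathfrak{p}^- = \mathfrak{g}$, whereas the paper dismisses it as immediate.
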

\begin{proof}
Let us prove the linear version; the affine version follows immediately. Let $(\mathfrak{p}_1, \mathfrak{p}_2)$ be such a pair. By definition, for $i = 1, 2$, we may write $\mathfrak{p_i} = \phi_i(\mathfrak{p}^+)$ for some $\phi_i \in G$. Let us apply the Bruhat decomposition to the map $\phi_1^{-1}\phi_2$: we may write
\[\phi_1^{-1}\phi_2 = p_1wp_2,\]
where $p_1, p_2$ belong to the minimal parabolic subgroup $P^+ := N_G(\mathfrak{p}^+)$, and $w$ is an element of the Weyl group $W := N_G(\mathfrak{a}) / Z_G(\mathfrak{a})$ (see \eg \cite{Kna96}, Theorem 7.40). Let $\phi := \phi_1p_1 = \phi_2p_2^{-1}w^{-1}$; then we have
\[\mathfrak{p}_1 = \phi(\mathfrak{p}^+) \text{ and } \mathfrak{p}_2 = \phi(w\mathfrak{p}^+).\]
It follows that $w\mathfrak{p}^+$ is transverse to $\mathfrak{p}^+$. This occurs iff $w$ is equal to $w_0$, the longest element of the Weyl group; but $w_0\mathfrak{p}^+ = \mathfrak{p}^-$. Thus $\mathfrak{p}_1 = \phi(\mathfrak{p}^+)$ and $\mathfrak{p}_2 = \phi(\mathfrak{p}^-)$ as required.
\end{proof}
\begin{claim}
\label{u_l_invariant_d_fixed}
Any map $\phi \in G \ltimes \mathfrak{g}$ leaving invariant both $\hat{\mathfrak{p}}^+$ and $\hat{\mathfrak{p}}^-$ belongs to the group $L \ltimes \mathfrak{l}$.
\end{claim}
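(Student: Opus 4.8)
The plan is to separate $\phi$ into its linear part $\ell(\phi) \in G$ and its translation vector $w \in \mathfrak{g}$ (so that $\phi = \tau_w\, \ell(\phi)$) and to pin down each in turn. Recall that $\phi$ acts on $\hat{\mathfrak{g}} = \mathfrak{g} \oplus \mathbb{R}_0$ by $\phi(x + t e_0) = \ell(\phi)(x) + t w + t e_0$, where $e_0$ denotes the chosen generator of $\mathbb{R}_0$. In particular $\phi$ stabilizes the vector hyperplane $\mathfrak{g} \subset \hat{\mathfrak{g}}$ and acts on it through $\ell(\phi)$; intersecting the $\phi$-stable subspaces $\hat{\mathfrak{p}}^+$ and $\hat{\mathfrak{p}}^-$ with $\mathfrak{g}$ therefore shows that $\ell(\phi)$ stabilizes $\hat{\mathfrak{p}}^+ \cap \mathfrak{g} = \mathfrak{p}^+$ and $\hat{\mathfrak{p}}^- \cap \mathfrak{g} = \mathfrak{p}^-$. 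Hence $\ell(\phi) \in N_G(\mathfrak{p}^+) \cap N_G(\mathfrak{p}^-) = P^+ \cap P^-$, where $P^\pm := N_G(\mathfrak{p}^\pm)$ as in the proof of Claim~\ref{pair_transitivity}. I would then invoke the standard fact that two opposite minimal parabolic subgroups meet exactly in their common Levi subgroup, $P^+ \cap P^- = L$ (see \eg\cite{Kna96}); this also drops out of the Bruhat decomposition already used in the proof of Claim~\ref{pair_transitivity}, and it is here that the connectedness of $G$ — which gives $N_G(\mathfrak{p}^\pm) = P^\pm$ — enters. Thus $\ell(\phi) \in L$.

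It remains to locate the translation vector. Since $\mathbb{R}_0 \subset \hat{\mathfrak{p}}^+$ and $\mathbb{R}_0 \subset \hat{\mathfrak{p}}^-$, the vector $e_0$ lies in both affine m.p.a.'s, hence so does $\phi(e_0) = w + e_0$. From $w + e_0 \in \hat{\mathfrak{p}}^+ = \mathfrak{p}^+ \oplus \mathbb{R}_0$ and $w + e_0 \in \hat{\mathfrak{p}}^- = \mathfrak{p}^- \oplus \mathbb{R}_0$, comparing the $\mathfrak{g}$-components (using that $\mathfrak{g}$ and $\mathbb{R}_0$ are complementary in $\hat{\mathfrak{g}}$) gives $w \in \mathfrak{p}^+ \cap \mathfrak{p}^-$. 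By the restricted root space decomposition $\mathfrak{g} = \mathfrak{n}^- \oplus \mathfrak{l} \oplus \mathfrak{n}^+$ one has $\mathfrak{p}^+ \cap \mathfrak{p}^- = (\mathfrak{l} \oplus \mathfrak{n}^+) \cap (\mathfrak{l} \oplus \mathfrak{n}^-) = \mathfrak{l}$, so $w \in \mathfrak{l}$. Combining the two steps, $\phi = \tau_w\, \ell(\phi)$ with $w \in \mathfrak{l}$ and $\ell(\phi) \in L$, i.e.\ $\phi \in L \ltimes \mathfrak{l}$, as claimed.

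I do not expect a genuine difficulty: once $\phi$ is written in matrix form the argument is pure bookkeeping with the decomposition $\hat{\mathfrak{g}} = \mathfrak{g} \oplus \mathbb{R}_0$, and the only non-elementary ingredient is the structural identity $N_G(\mathfrak{p}^+) \cap N_G(\mathfrak{p}^-) = L$, which is classical and in any case follows in one line from the Bruhat decomposition already invoked for Claim~\ref{pair_transitivity}.
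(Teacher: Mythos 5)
Your proof is correct and follows essentially the same route as the paper: reduce to the linear part via the classical identity $N_G(\mathfrak{p}^+)\cap N_G(\mathfrak{p}^-)=L$, then observe that the translation vector must lie in $\mathfrak{p}^+\cap\mathfrak{p}^-=\mathfrak{l}$ (the paper phrases this last step as $\phi$ preserving $\hat{\mathfrak{p}}^+\cap\hat{\mathfrak{p}}^-=\hat{\mathfrak{l}}$, which is the same bookkeeping as your tracking of $e_0$).
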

\begin{proof}
It is well-known (this follows for example from \cite{Kna96}, Lemma 7.64) that $N_G(\mathfrak{p}^+) \cap N_G(\mathfrak{p}^-) = Z_G(\mathfrak{a}) = L$; so the linear part of such a map $\phi$ must lie in $L$. Since $\phi$ leaves invariant the space $\hat{\mathfrak{p}}^+ \cap \hat{\mathfrak{p}}^- = \hat{\mathfrak{l}}$, its translation part must lie in $\mathfrak{l}$.
\end{proof}

\subsection{Quasi-translations}
\label{sec:quasi-translations}

In this subsection, we develop upon Claim \ref{u_l_invariant_d_fixed}: we study the action of elements of $L \ltimes \mathfrak{l}$ on the space $\hat{\mathfrak{l}}$.
\begin{definition}
\label{quasi-translation_def}
A \emph{quasi-translation} is any affine automorphism of~$\hat{\mathfrak{l}}$ induced by an element of the group $L \ltimes \mathfrak{l}$.
\end{definition}

Let us explain and justify this terminology. We define $Z := Z(L)$ to be the center of $L$, $D := [L, L]$ to be its derived subgroup, and $\mathfrak{z}$ and $\mathfrak{d}$ to be the corresponding Lie algebras. It is well-known that $L$ is reductive, hence we may write $\mathfrak{l} = \mathfrak{z} \oplus \mathfrak{d}$.

\begin{remark}
\label{a_vs_z}
Since $\mathfrak{l}$ may also be decomposed as $\mathfrak{a} \oplus \mathfrak{m}$ and $\mathfrak{a}$ is abelian, we have $\mathfrak{z} = \mathfrak{a} \oplus \mathfrak{z}(\mathfrak{m})$ ($\mathfrak{a}$ plus the center of the Lie algebra $\mathfrak{m}$) and $\mathfrak{d} = [\mathfrak{m}, \mathfrak{m}]$. In other words:
\begin{equation}
\label{eq:l_complete_decomposition}
\mathfrak{l} =
\lefteqn{\overbrace{\phantom{
  \mathfrak{a} \oplus \mathfrak{z}(\mathfrak{m})
}}^{\displaystyle \mathfrak{z}}}
  \mathfrak{a} \oplus \underbrace{
  \mathfrak{z}(\mathfrak{m}) \oplus \mathfrak{d}
}_{\displaystyle \mathfrak{m}}.
\end{equation}
So the following Proposition, and in fact every single statement in the rest of the paper, would still be true if we substituted, respectively, $\mathfrak{a}$ and $\mathfrak{m}$ for $\mathfrak{z}$ and $\mathfrak{d}$. The advantage of introducing $\mathfrak{z}$ and $\mathfrak{d}$ is that the Margulis invariants (see below) live in a larger space ($\mathfrak{z}$ instead of $\mathfrak{a}$), and so are finer invariants. Maybe this could be helpful for further study.
\end{remark}

In fact it is possible to show (see \cite{Kna96}, Theorem 7.53b and c) that $Z$ meets every connected component of $L$. Thus we may also write
\[L = ZD.\]
By definition, $L$ acts trivially on $\mathfrak{z}$ and $Z$ acts trivially on $\mathfrak{l}$; the only nontrivial action is that of $D$ on $\mathfrak{d}$. Moreover, $D$ preserves the Killing form, which is negative definite on $\mathfrak{d}$ (since $\mathfrak{d} \subset \mathfrak{m} \subset \mathfrak{k}$). To sum everything up:

\begin{proposition}
\label{quasi-translation}
Any quasi-translation is an element of $(\Orth(\mathfrak{d}) \ltimes \mathfrak{d}) \times \mathfrak{z}$.
\end{proposition}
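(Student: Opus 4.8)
The plan is simply to unwind Definition \ref{quasi-translation_def} and combine it with the structural facts about $L$ collected just above. By definition, a quasi-translation is the transformation induced on $\hat{\mathfrak{l}}$ by some pair $(g,v) \in L \ltimes \mathfrak{l}$; on the affine hyperplane at height $1$ this is the map $x \mapsto \Ad(g)(x) + v$ on $\mathfrak{l}$. I would fix such a pair and split everything along the decomposition $\mathfrak{l} = \mathfrak{z} \oplus \mathfrak{d}$, writing $v = v_{\mathfrak{z}} + v_{\mathfrak{d}}$ with $v_{\mathfrak{z}} \in \mathfrak{z}$ and $v_{\mathfrak{d}} \in \mathfrak{d}$, and likewise $x = x_{\mathfrak{z}} + x_{\mathfrak{d}}$.

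The heart of the matter is the linear part $\Ad(g)$. Since $\mathfrak{z}$ (the center of $\mathfrak{l}$) and $\mathfrak{d} = [\mathfrak{l},\mathfrak{l}]$ are characteristic subalgebras of $\mathfrak{l}$, the automorphism $\Ad(g)$ preserves both; on $\mathfrak{z}$ it is the identity because $L$ acts trivially on $\mathfrak{z}$. For the restriction to $\mathfrak{d}$, I would use $L = ZD$ to write $g = zd$ with $z \in Z$ and $d \in D$; since $Z$ acts trivially on $\mathfrak{l}$ we get $\Ad(g) = \Ad(d)$, and since $D$ preserves the Killing form of $\mathfrak{g}$, whose restriction to $\mathfrak{d} \subset \mathfrak{m} \subset \mathfrak{k}$ is negative definite, the map $\restr{\Ad(d)}{\mathfrak{d}}$ lies in $\Orth(\mathfrak{d})$, the orthogonal group of that form.

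Assembling the pieces, the quasi-translation sends $x_{\mathfrak{z}} + x_{\mathfrak{d}}$ to $(x_{\mathfrak{z}} + v_{\mathfrak{z}}) + \big(\restr{\Ad(d)}{\mathfrak{d}}(x_{\mathfrak{d}}) + v_{\mathfrak{d}}\big)$; that is, it acts as a pure translation by $v_{\mathfrak{z}}$ on the $\mathfrak{z}$-factor and as the affine map with linear part $\restr{\Ad(d)}{\mathfrak{d}} \in \Orth(\mathfrak{d})$ and translation part $v_{\mathfrak{d}}$ on the $\mathfrak{d}$-factor, hence it is an element of $(\Orth(\mathfrak{d}) \ltimes \mathfrak{d}) \times \mathfrak{z}$. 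There is no serious obstacle here, the argument being essentially bookkeeping; the only points deserving a word of justification are that the splitting $\mathfrak{l} = \mathfrak{z} \oplus \mathfrak{d}$ is $\Ad(L)$-stable (because both summands are characteristic) and that the "Killing form" defining $\Orth(\mathfrak{d})$ is to be read as the restriction to $\mathfrak{d}$ of the Killing form of $\mathfrak{g}$, whose negative-definiteness there is exactly what makes $\Orth(\mathfrak{d})$ a genuine (compact) orthogonal group.
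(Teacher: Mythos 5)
Your proposal is correct and follows essentially the same route as the paper, which likewise deduces the statement from the decompositions $L = ZD$ and $\mathfrak{l} = \mathfrak{z} \oplus \mathfrak{d}$, the trivial actions of $L$ on $\mathfrak{z}$ and of $Z$ on $\mathfrak{l}$, and the fact that $D$ preserves the Killing form, which is negative definite on $\mathfrak{d} \subset \mathfrak{m} \subset \mathfrak{k}$. Your explicit bookkeeping of the translation vector $v = v_{\mathfrak{z}} + v_{\mathfrak{d}}$ just spells out what the paper leaves implicit.
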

In other words, quasi-translations correspond to affine isometries of $\mathfrak{l}_{\Aff} = \hat{\mathfrak{l}} \cap \mathfrak{g}_{\Aff}$ that preserve the directions of $\mathfrak{d}$ and $\mathfrak{z}$ and act only by translation on the $\mathfrak{z}$ component.

\begin{example}\mbox{ }
\begin{enumerate}
\item For $G = \PSL_n(\mathbb{R})$, since the algebra $\mathfrak{l} = \mathfrak{a}$ is abelian, $\mathfrak{z}$ coincides with $\mathfrak{l}$ or equivalently $\mathfrak{d}$ is trivial, so a quasi-translation is simply a translation.
\item Take $G = \SO^+(4, 1)$; in this case we have $\mathfrak{d} = \mathfrak{m} \simeq \mathfrak{so}(3)$. This is the simplest example that requires the full strength of the proofs given in this paper.
\item Take $G = \PSU(3,1) \simeq \PSO^*(6)$: then $\mathfrak{m} \simeq \mathfrak{su}(2) \oplus \mathbb{R}$ as a Lie algebra, so that $\mathfrak{z}(\mathfrak{m}) \simeq \mathbb{R}$ and $\mathfrak{d} \simeq \mathfrak{su}(2)$. This shows that all three spaces in the decomposition \eqref{eq:l_complete_decomposition} can be nonzero, even when $G$ is simple.
\end{enumerate}
A table giving the algebra $\mathfrak{m}$ for every simple algebra $\mathfrak{g}$ may be found in \cite{Kna96}, Appendix~C.
\end{example}

\subsection{Canonical identifications}
\label{sec:canonical}

Here we introduce canonical identifications (up to quasi-translation) between different spaces $A^\sube_g$ (Corollary \ref{canonical_identification}), and use them to define the Margulis invariant of an $\mathbb{R}$-regular map. We also check that these identifications commute with certain "natural" projections (Lemma \ref{projections_commute}).

The following two properties are immediate consequences of Claims \ref{Asubge_is_ampa}, \ref{pair_transitivity} and \ref{u_l_invariant_d_fixed}:
\begin{corollary}
\label{canonical_identification}
Let $(\hat{\mathfrak{p}}_1, \hat{\mathfrak{p}}_2)$ be a pair of transverse affine m.p.a.'s. Then any map $\phi \in G \ltimes \mathfrak{g}$ such that $\phi(\hat{\mathfrak{p}}_1, \hat{\mathfrak{p}}_2) = (\hat{\mathfrak{p}}^+, \hat{\mathfrak{p}}^-)$ gives, by restriction, an identification of the intersection $\hat{\mathfrak{p}}_1 \cap \hat{\mathfrak{p}}_2$ with $\hat{\mathfrak{l}}$, which is unique up to composition on the left by a quasi-translation.
\end{corollary}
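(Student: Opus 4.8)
The plan is to deduce this corollary directly from the three cited results, essentially by unwinding the definitions. First I would check existence: by Claim \ref{pair_transitivity} (affine version), since $(\hat{\mathfrak{p}}_1, \hat{\mathfrak{p}}_2)$ is a pair of transverse affine m.p.a.'s, there is at least one $\phi \in G \ltimes \mathfrak{g}$ with $\phi(\hat{\mathfrak{p}}_1) = \hat{\mathfrak{p}}^+$ and $\phi(\hat{\mathfrak{p}}_2) = \hat{\mathfrak{p}}^-$. Any such $\phi$ sends the intersection $\hat{\mathfrak{p}}_1 \cap \hat{\mathfrak{p}}_2$ onto $\hat{\mathfrak{p}}^+ \cap \hat{\mathfrak{p}}^- = \hat{\mathfrak{l}}$ (the last equality by the definition of $\hat{\mathfrak{p}}^\pm$, or equivalently by transversality, which forces the intersection to have dimension exactly $\dim \mathfrak{l} + 1$ and to be contained in both, hence to equal $\hat{\mathfrak{l}}$). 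So restriction of $\phi$ to $\hat{\mathfrak{p}}_1 \cap \hat{\mathfrak{p}}_2$ is a linear isomorphism onto $\hat{\mathfrak{l}}$, giving the desired identification.

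Next I would address uniqueness up to quasi-translation. Suppose $\phi, \phi'$ both satisfy $\phi(\hat{\mathfrak{p}}_1, \hat{\mathfrak{p}}_2) = \phi'(\hat{\mathfrak{p}}_1, \hat{\mathfrak{p}}_2) = (\hat{\mathfrak{p}}^+, \hat{\mathfrak{p}}^-)$. Then $\psi := \phi' \circ \phi^{-1} \in G \ltimes \mathfrak{g}$ leaves both $\hat{\mathfrak{p}}^+$ and $\hat{\mathfrak{p}}^-$ invariant, so by Claim \ref{u_l_invariant_d_fixed} we have $\psi \in L \ltimes \mathfrak{l}$. Restricting $\psi$ to $\hat{\mathfrak{l}}$ (which $\psi$ preserves, being in $L \ltimes \mathfrak{l}$), we get, by Definition \ref{quasi-translation_def}, a quasi-translation $q$, and on $\hat{\mathfrak{p}}_1 \cap \hat{\mathfrak{p}}_2$ we have $\restr{\phi'}{} = q \circ \restr{\phi}{}$. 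Conversely, composing any valid $\phi$ on the left by a quasi-translation $q$ (extended to $\phi' := \tilde{q} \circ \phi$, where $\tilde{q} \in L \ltimes \mathfrak{l}$ is any lift of $q$) again sends the pair $(\hat{\mathfrak{p}}_1, \hat{\mathfrak{p}}_2)$ to $(\hat{\mathfrak{p}}^+, \hat{\mathfrak{p}}^-)$, since elements of $L \ltimes \mathfrak{l}$ fix $\hat{\mathfrak{p}}^\pm$. Hence the set of induced identifications $\hat{\mathfrak{p}}_1 \cap \hat{\mathfrak{p}}_2 \xrightarrow{\sim} \hat{\mathfrak{l}}$ is exactly one left-coset under the group of quasi-translations.

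The one point that needs a little care — and which I expect to be the main (very mild) obstacle — is the passage between an affine map $\psi \in L \ltimes \mathfrak{l}$ of $\hat{\mathfrak{g}}$ and the quasi-translation it induces on $\hat{\mathfrak{l}}$: one must observe that $\psi$ genuinely preserves $\hat{\mathfrak{l}} = \mathfrak{l} \oplus \mathbb{R}_0$ (its linear part lies in $L$, which preserves $\mathfrak{l}$ as $L = Z_G(\mathfrak{a})$ centralizes $\mathfrak{a} \subset \mathfrak{l}$ and acts on $\mathfrak{l}$; its translation part lies in $\mathfrak{l}$), so that "restriction to $\hat{\mathfrak{l}}$" makes sense and lands in the quasi-translation group by Definition \ref{quasi-translation_def}. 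Everything else is formal bookkeeping about composition of restrictions, so this corollary really is, as the text says, an immediate consequence of Claims \ref{pair_transitivity} and \ref{u_l_invariant_d_fixed} together with the definition of a quasi-translation.
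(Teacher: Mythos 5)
Your proof is correct and follows exactly the route the paper intends: the paper treats this corollary as an immediate consequence of Claims \ref{pair_transitivity} and \ref{u_l_invariant_d_fixed} together with Definition \ref{quasi-translation_def}, which is precisely what you have spelled out (existence via the Bruhat-decomposition claim, uniqueness via $\phi'\circ\phi^{-1}\in L\ltimes\mathfrak{l}$ restricting to a quasi-translation on $\hat{\mathfrak{l}}$). No gaps; your extra care about why elements of $L\ltimes\mathfrak{l}$ preserve $\hat{\mathfrak{l}}$ is exactly the small verification the paper leaves implicit.
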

Here by $\phi(\hat{\mathfrak{p}}_1, \hat{\mathfrak{p}}_2)$ we mean the pair $(\phi(\hat{\mathfrak{p}}_1), \phi(\hat{\mathfrak{p}}_2))$. Note that if $\hat{\mathfrak{p}}_1 \cap \hat{\mathfrak{p}}_2$ is obtained in another way as an intersection of two affine m.p.a.'s, the identification with $\hat{\mathfrak{l}}$ may differ not just by a quasi-translation, but also by an element of the Weyl group.
\begin{corollary}
\label{V=_translation}
Let $g \in G \ltimes \mathfrak{g}$ be an $\mathbb{R}$-regular map. Let $\phi \in G \ltimes \mathfrak{g}$ be any map such that $\phi(A^\subge_g, A^\suble_g) = (\hat{\mathfrak{p}}^+, \hat{\mathfrak{p}}^-)$. Then the restriction of the conjugate $\phi g \phi^{-1}$ to $\hat{\mathfrak{l}}$ is a quasi-translation.
\end{corollary}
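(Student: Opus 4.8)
Write $g' := \phi g \phi^{-1}$. The plan is to show that $g'$ leaves invariant both $\hat{\mathfrak{p}}^+$ and $\hat{\mathfrak{p}}^-$, so that Claim~\ref{u_l_invariant_d_fixed} places $g'$ in $L \ltimes \mathfrak{l}$, whence its restriction to $\hat{\mathfrak{l}}$ is a quasi-translation by Definition~\ref{quasi-translation_def}.

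First I would record that the dynamical subspaces transform naturally under conjugation. Indeed, they are defined purely in terms of the moduli of the eigenvalues of the linear action of a map on $\hat{\mathfrak{g}}$; since $\phi$ acts on $\hat{\mathfrak{g}}$ by a linear automorphism, it carries each generalized eigenspace of $g$ to the corresponding one of $g' = \phi g \phi^{-1}$. Hence $\phi(V^\subg_g) = V^\subg_{g'}$, $\phi(V^\sube_g) = V^\sube_{g'}$, $\phi(V^\subl_g) = V^\subl_{g'}$, and therefore also $\phi(A^\subge_g) = A^\subge_{g'}$ and $\phi(A^\suble_g) = A^\suble_{g'}$. By the hypothesis on $\phi$, this gives $A^\subge_{g'} = \hat{\mathfrak{p}}^+$ and $A^\suble_{g'} = \hat{\mathfrak{p}}^-$.

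Now the dynamical subspaces of $g'$ are by construction stable under $g'$, so $g'$ leaves invariant both $\hat{\mathfrak{p}}^+$ and $\hat{\mathfrak{p}}^-$. Claim~\ref{u_l_invariant_d_fixed} then yields $g' \in L \ltimes \mathfrak{l}$. In particular $g'$ preserves $\hat{\mathfrak{l}} = \hat{\mathfrak{p}}^+ \cap \hat{\mathfrak{p}}^-$ (which is also $A^\sube_{g'}$), so the restriction $\restr{g'}{\hat{\mathfrak{l}}}$ is a well-defined affine automorphism of $\hat{\mathfrak{l}}$ induced by an element of $L \ltimes \mathfrak{l}$, i.e.\ a quasi-translation.

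There is essentially no serious obstacle here: the only points to verify are the naturality of the dynamical subspaces under conjugation (immediate from their definition) and the hypothesis of Claim~\ref{u_l_invariant_d_fixed} (just established). It is worth remarking, using Claim~\ref{Asubge_is_ampa}(ii) together with Claim~\ref{pair_transitivity}, that $(A^\subge_g, A^\suble_g)$ is a transverse pair of affine m.p.a.'s, so that a map $\phi$ as in the statement always exists; but the statement, and the argument above, concern an arbitrary such $\phi$.
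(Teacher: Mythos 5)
Your proof is correct and is exactly the argument the paper intends: the paper gives no separate proof, stating only that this corollary is an immediate consequence of Claims~\ref{Asubge_is_ampa}, \ref{pair_transitivity} and \ref{u_l_invariant_d_fixed}, and your chain (equivariance of the dynamical subspaces under conjugation, then invariance of $\hat{\mathfrak{p}}^\pm$ under $g'$, then Claim~\ref{u_l_invariant_d_fixed} and Definition~\ref{quasi-translation_def}) is precisely that deduction. Your closing remark on the existence of $\phi$ via Claim~\ref{Asubge_is_ampa}(ii) and Claim~\ref{pair_transitivity} matches the paper's citation of those claims as well.
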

This leads to the following proposition. We call $\pi_\mathfrak{z}$ the projection from $\mathfrak{l}$ onto $\mathfrak{z}$ parallel to $\mathfrak{d}$.
\begin{proposition}
\label{margulis_invariant_prop}
Let $g \in G \ltimes \mathfrak{g}$ be an $\mathbb{R}$-regular map. Take any point $x$ in the affine space $A^{\sube}_{g} \cap \mathfrak{g}_{\Aff}$ and any map $\phi \in G$ such that $\phi(V^{\subge}_{g}, V^{\suble}_{g}) = (\mathfrak{p}^+, \mathfrak{p}^-)$. Then the vector
\[M(g) := \pi_\mathfrak{z}(\phi(g(x)-x)) \in \mathfrak{z}\]
does not depend on the choice of $x$ or $\phi$.
\end{proposition}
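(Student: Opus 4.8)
The plan is to reduce the statement to two ingredients: that the difference $g(x)-x$ always lies in the linear part $V^\sube_g$ of $A^\sube_g$, and that any element of $G$ stabilizing both $\mathfrak{p}^+$ and $\mathfrak{p}^-$ lies in $L$ and hence acts on $\mathfrak{l}=\mathfrak{z}\oplus\mathfrak{d}$ without affecting the $\mathfrak{z}$-component. Granting these, the independence of $M(g)$ from $x$ and from $\phi$ follows from two short, nearly identical computations, and the same ingredients also show $M(g)\in\mathfrak{z}$.

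First I would set up the well-definedness. Since the spaces $V^\subg_g$, $V^\sube_g$, $V^\subl_g$ are $g$-stable and $g$ restricts to $\ell(g)$ on the vector hyperplane $\mathfrak{g}\subset\hat{\mathfrak{g}}$, the subspaces $V^\subge_g=V^\subg_g\oplus V^\sube_g$ and $V^\suble_g=V^\subl_g\oplus V^\sube_g$ are $\ell(g)$-stable, and $V^\subge_g\cap V^\suble_g=V^\sube_g$; likewise $\mathfrak{p}^+\cap\mathfrak{p}^-=\mathfrak{l}$ (immediate from $\mathfrak{g}=\mathfrak{n}^-\oplus\mathfrak{l}\oplus\mathfrak{n}^+$ and $\mathfrak{p}^\pm=\mathfrak{l}\oplus\mathfrak{n}^\pm$). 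Hence any $\phi\in G$ with $\phi(V^\subge_g,V^\suble_g)=(\mathfrak{p}^+,\mathfrak{p}^-)$ automatically satisfies $\phi(V^\sube_g)=\mathfrak{l}$. Moreover, for $x\in A^\sube_g\cap\mathfrak{g}_{\Aff}$ the image $g(x)$ lies in $A^\sube_g$ (which is $g$-stable) and in $\mathfrak{g}_{\Aff}$ (which $g$ maps to itself), so $g(x)-x\in A^\sube_g\cap\mathfrak{g}=V^\sube_g$. Therefore $\phi(g(x)-x)\in\mathfrak{l}$, which makes $\pi_\mathfrak{z}(\phi(g(x)-x))$ meaningful and shows $M(g)\in\mathfrak{z}$.

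The common engine is then the following. Fix a $\phi$ as in the statement. Because $\ell(g)$ stabilizes $V^\subge_g$ and $V^\suble_g$, the conjugate $\phi\,\ell(g)\,\phi^{-1}$ stabilizes $\phi(V^\subge_g)=\mathfrak{p}^+$ and $\phi(V^\suble_g)=\mathfrak{p}^-$, so it lies in $N_G(\mathfrak{p}^+)\cap N_G(\mathfrak{p}^-)=L$ (the equality already used in the proof of Claim~\ref{u_l_invariant_d_fixed}). Consequently, by Proposition~\ref{quasi-translation} — equivalently, because $L$ preserves the decomposition $\mathfrak{l}=\mathfrak{z}\oplus\mathfrak{d}$ and acts trivially on $\mathfrak{z}$ — the restriction of $\phi\,\ell(g)\,\phi^{-1}$ to $\mathfrak{l}$ is the identity modulo $\mathfrak{d}$, \ie $\pi_\mathfrak{z}\circ\restr{(\phi\,\ell(g)\,\phi^{-1})}{\mathfrak{l}}=\pi_\mathfrak{z}$. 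For independence of $x$: if $x,x'\in A^\sube_g\cap\mathfrak{g}_{\Aff}$ then, using that $g$ acts linearly on $\hat{\mathfrak{g}}$ and $x'-x\in V^\sube_g\subset\mathfrak{g}$, one has $(g(x')-x')-(g(x)-x)=(\ell(g)-\Id)(x'-x)$; applying $\pi_\mathfrak{z}\circ\phi$, setting $w:=\phi(x'-x)\in\mathfrak{l}$, and invoking the displayed identity gives $\pi_\mathfrak{z}(\phi\,\ell(g)\,\phi^{-1}(w))-\pi_\mathfrak{z}(w)=0$. For independence of $\phi$: given two admissible maps $\phi_1,\phi_2$, the composite $\psi:=\phi_1\phi_2^{-1}$ sends $\mathfrak{p}^+=\phi_2(V^\subge_g)$ to $\phi_1(V^\subge_g)=\mathfrak{p}^+$ and likewise fixes $\mathfrak{p}^-$, so $\psi\in L$ and $\pi_\mathfrak{z}\circ\restr{\psi}{\mathfrak{l}}=\pi_\mathfrak{z}$; since $\phi_2(g(x)-x)\in\mathfrak{l}$ and $\phi_1(g(x)-x)=\psi\bigl(\phi_2(g(x)-x)\bigr)$, the two values of $M(g)$ coincide.

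The only step needing genuine care — and the one I would flag as the main obstacle — is the first, bookkeeping one: checking that the hypothesis on $V^\subge_g$ and $V^\suble_g$ really forces $\phi(V^\sube_g)=\mathfrak{l}$, and that $g(x)-x$ truly lands in $V^\sube_g$ (which rests on $g$ mapping $\mathfrak{g}_{\Aff}$ to itself and on $A^\sube_g$ being $g$-stable). After that everything is formal, powered by $N_G(\mathfrak{p}^+)\cap N_G(\mathfrak{p}^-)=L$ and the triviality of the $L$-action on $\mathfrak{z}$.
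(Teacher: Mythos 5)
Your proof is correct. The well-definedness step ($g(x)-x\in V^\sube_g$, hence $\phi(g(x)-x)\in\mathfrak{l}$) and the independence of $\phi$ (via $\phi_1\phi_2^{-1}\in N_G(\mathfrak{p}^+)\cap N_G(\mathfrak{p}^-)=L$ and triviality of the $L$-action on $\mathfrak{z}$) follow the paper essentially verbatim, with the paper's appeal to Corollary~\ref{canonical_identification} unpacked into its underlying ingredients. Where you genuinely diverge is the independence of $x$: the paper lifts $\phi$ to an affine canonizing map $\hat\phi$, invokes Corollary~\ref{V=_translation} to see that $\hat\phi g\hat\phi^{-1}$ is a quasi-translation of $\hat{\mathfrak{l}}$, and concludes that it translates the $\hat{\mathfrak{z}}$-component by a fixed vector; you instead stay entirely in the linear picture, writing $(g(x')-x')-(g(x)-x)=(\ell(g)-\Id)(x'-x)$ with $x'-x\in V^\sube_g$ and observing that $\pi_\mathfrak{z}\circ\phi$ annihilates this because $\phi\,\ell(g)\,\phi^{-1}\in L$ acts as the identity on the $\mathfrak{z}$-component of $\mathfrak{l}$. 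The two arguments rest on the same structural fact ($N_G(\mathfrak{p}^+)\cap N_G(\mathfrak{p}^-)=L$ and $L$ acting trivially on $\mathfrak{z}$), but yours is slightly more economical: it needs neither the affine lift $\hat\phi$ nor the quasi-translation formalism on $\hat{\mathfrak{l}}$, only the elementary identity for the difference of displacement vectors. The trade-off is that the paper's route, by exhibiting $\hat\phi g\hat\phi^{-1}$ explicitly as a quasi-translation, sets up the alternative formula \eqref{eq:margulis_invariant_alternative} that is reused later (e.g.\ in the proof of Proposition~\ref{invariant_additivity}), whereas your computation is local to this proposition.
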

\begin{definition}
\label{margulis_invariant_def}
The vector $M(g)$ is called the \emph{Margulis invariant} of $g$.
\end{definition}

The proposition is more or less an immediate consequence of all the previous statements; but since the Margulis invariant is the central object of this paper, we give the detailed proof.

\begin{proof}[Proof of Proposition \ref{margulis_invariant_prop}.]~
\begin{itemize}
\item Let us first check that $M(g) \in \mathfrak{z}$. By hypothesis we have $x \in A^{\sube}_{g}$, hence also $g(x) \in A^{\sube}_{g}$. On the other hand, since $\mathfrak{g}$ is an extended affine map, it stabilizes the affine space $\mathfrak{g}_{\Aff}$; and the difference between two elements of $\mathfrak{g}_{\Aff}$ (two affine points) is an element of $\mathfrak{g}$ (a vector). It follows that
\[g(x) - x \in A^{\sube}_{g} \cap \mathfrak{g} = V^{\sube}_{g}.\]
Using the definition of $\phi$ and a purely linear version of Corollary \ref{dynamical_spaces_description}, we then have
\[\phi(g(x) - x) \in \mathfrak{l},\]
hence
\[\pi_{\mathfrak{z}}(\phi(g(x) - x)) \in \mathfrak{z}.\]

\item The independence of the result on the choice of $\phi$ essentially follows from a linear version of Corollary \ref{canonical_identification}. Indeed, let $\phi'$ be another element of $G$ satisfying the hypothesis; then a linear version of Corollary \ref{canonical_identification} says that in restriction to $\mathfrak{l}$, we have $\phi' = l \phi$ where $l$ is some quasi-translation that is also an element of $G$. Now by Proposition \ref{quasi-translation}, a quasi-translation without translation part is just an element of $\Orth(\mathfrak{d})$, and acts trivially on $\mathfrak{z}$. It follows that $\phi(g(x) - x)$ and $l(\phi(g(x) - x))$ have the same $\mathfrak{z}$-component.

\item The independence on the choice of $x$ is a consequence of Corollary \ref{V=_translation}. Indeed, let $\hat{\phi}$ be an element of $G \ltimes \mathfrak{g}$ whose restriction to $\mathfrak{g}$ (linear part) is equal to $\phi$ and such that
\[\hat{\phi}(A^{\subge}_{g}, A^{\suble}_{g}) = (\hat{\mathfrak{p}}^+, \hat{\mathfrak{p}}^-).\]
Then we may rewrite
\begin{equation}
\label{eq:margulis_invariant_alternative}
M(g) = \pi_{\mathfrak{z}}(g'(x') - x'),
\end{equation}
where we set $g' := \hat{\phi} g \hat{\phi}^{-1}$ and $x' = \hat{\phi}(x)$. By Corollary \ref{dynamical_spaces_description}, $\hat{\phi}$ induces a bijection between the extended affine spaces $A^{\sube}_g$ and $\hat{\mathfrak{l}}$, hence between the actual affine spaces $A^{\sube}_g \cap \mathfrak{g}_{\Aff}$ and $\hat{\mathfrak{l}} \cap \mathfrak{g}_{\Aff}$; so now our task is to show that the formula \eqref{eq:margulis_invariant_alternative} gives the same result for every choice of $x' \in \hat{\mathfrak{l}} \cap \mathfrak{g}_{\Aff}$.

Now by Corollary \ref{V=_translation}, $g'$ is a quasi-translation. By Proposition \ref{quasi-translation}, it follows that $g'$ acts only by translation on the $\mathfrak{z}$-component. In other words, let $\hat{\pi}_{\mathfrak{z}}$ be an affine version of $\pi_{\mathfrak{z}}$, defined as the projection from $\hat{\mathfrak{l}}$ onto $\hat{\mathfrak{z}} := \mathfrak{z} \oplus \mathbb{R}_0$ parallel to $\mathfrak{d}$; then we have
\[\hat{\pi}_{\mathfrak{z}} \circ g' = \tau_v \circ \hat{\pi}_{\mathfrak{z}}\]
for some vector $v \in \mathfrak{z}$, so that
\[\pi_{\mathfrak{z}}(g'(x')-x') = \hat{\pi}_{\mathfrak{z}}(g'(x')-x') = \tau_v(\hat{\pi}_{\mathfrak{z}}(x')) - \hat{\pi}_{\mathfrak{z}}(x').\]
Here $\hat{\pi}_{\mathfrak{z}}(x')$ is an element of the actual affine space $\hat{\mathfrak{z}} \cap \mathfrak{g}_{\Aff}$. It follows that
\[\tau_v(\hat{\pi}_{\mathfrak{z}}(x')) - \hat{\pi}_{\mathfrak{z}}(x') = v,\]
and the vector $M(g) = v$ does not depend on the choice of $x'$ (or $x$). \qedhere
\end{itemize}
\end{proof}

\begin{lemma}
\label{projections_commute}
Take any affine m.p.a. $\hat{\mathfrak{p}}_1$. Let $\mathfrak{n}_1$ be the nilradical of its linear part, and $\hat{\mathfrak{p}}_2$ and $\hat{\mathfrak{p}}'_2$ be any two affine m.p.a.'s both transverse to $\hat{\mathfrak{p}}_1$. Let $\phi$ (resp. $\phi'$) be an element of $G \ltimes \mathfrak{g}$ that sends the pair $(\hat{\mathfrak{p}}_1, \hat{\mathfrak{p}}_2)$ (resp. $(\hat{\mathfrak{p}}_1, \hat{\mathfrak{p}}'_2)$) to $(\hat{\mathfrak{p}}^+, \hat{\mathfrak{p}}^-)$. Let
\[\psi: \hat{\mathfrak{p}}_1 \cap \hat{\mathfrak{p}}_2 \longlongrightarrow \hat{\mathfrak{p}}_1 \cap \hat{\mathfrak{p}}'_2\]
be the projection parallel to $\mathfrak{n}_1$. Then the map $\overline{\psi}$ defined by the commutative diagram
\[
\begin{tikzcd}
  \hat{\mathfrak{l}}
    \arrow{rr}{\overline{\psi}}
&
& \hat{\mathfrak{l}}\\
\\
  \hat{\mathfrak{p}}_1 \cap \hat{\mathfrak{p}}_2
    \arrow{rr}{\psi}
    \arrow{uu}{\phi}
&
& \hat{\mathfrak{p}}_1 \cap \hat{\mathfrak{p}}'_2
    \arrow{uu}{\phi'}
\end{tikzcd}
\]
is a quasi-translation.
\end{lemma}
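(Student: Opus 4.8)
The plan is to reduce the statement to Claim \ref{u_l_invariant_d_fixed}, by showing that the composite $\overline{\psi} = \phi' \circ \psi \circ \phi^{-1}$ extends to an element of $G \ltimes \mathfrak{g}$ that leaves both $\hat{\mathfrak{p}}^+$ and $\hat{\mathfrak{p}}^-$ invariant; by that claim such an element lies in $L \ltimes \mathfrak{l}$, and hence its restriction to $\hat{\mathfrak{l}}$ is by definition a quasi-translation. The subtlety is that $\psi$ is only defined on the subspace $\hat{\mathfrak{p}}_1 \cap \hat{\mathfrak{p}}_2 = \hat{\mathfrak{l}}$-image, so I must produce a \emph{global} map on $\hat{\mathfrak{g}}$ that restricts to it correctly.

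First I would transport the whole picture to the standard position via $\phi$. Since $\phi(\hat{\mathfrak{p}}_1) = \hat{\mathfrak{p}}^+$ and $\phi(\hat{\mathfrak{p}}_2) = \hat{\mathfrak{p}}^-$, and since $\mathfrak{n}_1$ is the nilradical of the linear part of $\hat{\mathfrak{p}}_1$ (which is canonical, so $\phi(\mathfrak{n}_1) = \mathfrak{n}^+$ by the reasoning in Claim \ref{Asubge_is_ampa}(iii)), the map $\phi$ conjugates $\psi$ to the projection $\hat{\mathfrak{p}}^+ \cap \hat{\mathfrak{p}}^- = \hat{\mathfrak{l}} \to \hat{\mathfrak{p}}^+ \cap \phi(\hat{\mathfrak{p}}'_2)$ parallel to $\mathfrak{n}^+$. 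Set $\hat{\mathfrak{q}} := \phi(\hat{\mathfrak{p}}'_2)$; it is an affine m.p.a. transverse to $\hat{\mathfrak{p}}^+$. So it suffices to treat the case $\hat{\mathfrak{p}}_1 = \hat{\mathfrak{p}}^+$, $\hat{\mathfrak{p}}_2 = \hat{\mathfrak{p}}^-$, $\phi = \Id$, and to show that the projection $\hat{\mathfrak{l}} \to \hat{\mathfrak{p}}^+ \cap \hat{\mathfrak{q}}$ parallel to $\mathfrak{n}^+$, followed by a map $\phi'$ sending $(\hat{\mathfrak{p}}^+, \hat{\mathfrak{q}})$ to $(\hat{\mathfrak{p}}^+, \hat{\mathfrak{p}}^-)$, is a quasi-translation.

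Now the key step: the projection from $\hat{\mathfrak{p}}^+$ onto the subspace $\hat{\mathfrak{p}}^+ \cap \hat{\mathfrak{q}}$ parallel to $\mathfrak{n}^+$ — restricted to $\hat{\mathfrak{l}}$ — is itself realised by an element of the parabolic group $P^+ \ltimes \mathfrak{p}^+$, namely by the unique element $u \in N^+ \ltimes \mathfrak{n}^+$ (the unipotent radical, together with $\mathfrak{n}^+$-translations) with $u(\hat{\mathfrak{p}}^-) = \hat{\mathfrak{q}}$. Indeed $\hat{\mathfrak{q}}$, being transverse to $\hat{\mathfrak{p}}^+$, lies in the big Bruhat cell, so $\hat{\mathfrak{q}} = u(\hat{\mathfrak{p}}^-)$ for a unique such $u$ by the affine Bruhat decomposition (this is exactly the argument of Claim \ref{pair_transitivity}, refined to pin down the $P^+$-part as a unipotent element plus an $\mathfrak{n}^+$-translation); and since $u$ fixes $\hat{\mathfrak{p}}^+$ and acts on the quotient $\hat{\mathfrak{p}}^+ / \mathfrak{n}^+$ trivially (as $u$ is unipotent in $N^+\ltimes\mathfrak{n}^+$ and $\mathfrak{n}^+$ is precisely the nilradical), the restriction $u|_{\hat{\mathfrak{l}}}$ coincides with the projection $\psi$ (both are the unique map $\hat{\mathfrak{l}} \to \hat{\mathfrak{p}}^+ \cap \hat{\mathfrak{q}}$ that is the identity modulo $\mathfrak{n}^+$). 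Then $\phi' \circ u$ is an element of $G \ltimes \mathfrak{g}$ sending $\hat{\mathfrak{p}}^+$ to $\hat{\mathfrak{p}}^+$ (since both $u$ and $\phi'$ do) and $\hat{\mathfrak{p}}^-$ to $\hat{\mathfrak{p}}^-$ (since $u(\hat{\mathfrak{p}}^-) = \hat{\mathfrak{q}}$ and $\phi'(\hat{\mathfrak{q}}) = \hat{\mathfrak{p}}^-$). By Claim \ref{u_l_invariant_d_fixed}, $\phi' \circ u \in L \ltimes \mathfrak{l}$; its restriction to $\hat{\mathfrak{l}}$ is a quasi-translation, and that restriction is exactly $\overline{\psi} = \phi' \circ \psi \circ \phi^{-1}$ (after undoing the first reduction). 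I expect the main obstacle to be the careful bookkeeping in that key step — verifying that the \emph{geometric} projection parallel to $\mathfrak{n}^+$ really agrees, on $\hat{\mathfrak{l}}$, with the algebraic action of the Bruhat unipotent $u$, i.e.\ that both are characterised by "being the identity modulo $\mathfrak{n}^+$". Everything else is a routine application of the three Claims of Subsection \ref{sec:algebraic}.
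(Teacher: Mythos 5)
Your proposal is correct and is essentially the paper's own proof: the paper factors $\phi'$ as $l\circ n$ via the affine Langlands decomposition $P^+\ltimes\mathfrak{p}^+=(L\ltimes\mathfrak{l})(N^+\ltimes\mathfrak{n}^+)$ and shows $n\circ\psi=\Id$ on $\hat{\mathfrak{l}}$ by exactly your ``identity modulo $\mathfrak{n}^+$'' characterization, so your Bruhat element $u$ is just $n^{-1}$. The reorganization (producing $u$ directly and then applying Claim \ref{u_l_invariant_d_fixed} to $\phi'\circ u$) is only cosmetic.
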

The maps $\phi$ and $\phi'$ exist by Claim \ref{pair_transitivity}, and their restrictions that appear in the diagram are unique up to quasi-translation by Corollary \ref{canonical_identification}. The projection $\psi$ is well-defined because $\hat{\mathfrak{p}}^+ = \mathfrak{n}^+ \oplus \hat{\mathfrak{l}} = \mathfrak{n}^+ \oplus (\hat{\mathfrak{p}}^+ \cap \hat{\mathfrak{p}}^-)$, and so $\hat{\mathfrak{p}}_1 = \phi'^{-1}(\hat{\mathfrak{p}}^+) = \mathfrak{n}_1 \oplus (\hat{\mathfrak{p}}_1 \cap \hat{\mathfrak{p}}'_2)$.
\begin{proof}
Without loss of generality, we may assume that $\phi = \Id$ (otherwise we simply replace the three affine m.p.a.'s by their images under $\phi^{-1}$.) Then we have $\hat{\mathfrak{p}}_1 = \hat{\mathfrak{p}}^+$, $\hat{\mathfrak{p}}_2 = \hat{\mathfrak{p}}^-$ and $\hat{\mathfrak{p}}'_2 = \phi'^{-1}(\hat{\mathfrak{p}}^-)$, where $\phi'$ can be any map stabilizing the space $\hat{\mathfrak{p}}^+$. We want to show that the map $\phi' \circ \psi$ is a quasi-translation.

We know that $\phi'$ lies in the stabilizer $N_{G \ltimes \mathfrak{g}}(\hat{\mathfrak{p}}^+)$, which is equal to \mbox{$P^+ \ltimes \mathfrak{p}^+$}, where $P^+ := N_G(\mathfrak{p}^+)$ is the minimal parabolic subgroup with Lie algebra $\mathfrak{p}^+$. We shall use the Langlands decomposition
\[P^+ = MAN^+ = LN^+,\]
where $N^+$ is the connected group with Lie algebra $\mathfrak{n}^+$ (see \eg \cite{Kna96}, Proposition 7.83). Since $L$ normalizes $\mathfrak{n}^+$ and $\mathfrak{l} + \mathfrak{n}^+ = \mathfrak{p}^+$, this generalizes to the "affine Langlands decomposition"
\[P^+ \ltimes \mathfrak{p}^+ = (L \ltimes \mathfrak{l})(N^+ \ltimes \mathfrak{n}^+).\]
Thus we may write $\phi' = l \circ n$ with $l \in L \ltimes \mathfrak{l}$ and $n \in N^+ \ltimes \mathfrak{n}^+$.

We shall use the following fact: every element $n$ of the group \mbox{$N^+ \ltimes \mathfrak{n}^+$} stabilizes the space $\mathfrak{n}^+$ and induces the identity map on the quotient space $\hat{\mathfrak{p}}^+ / \mathfrak{n}^+$. Indeed, when $n$ lies in the "linear" group $N^+$, since $N^+$ is connected, this follows from the fact that $\mathfrak{n}^+$ is an ideal of $\mathfrak{p}^+$. When $n$ is a pure translation by a vector of $\mathfrak{n}^+$, this is obvious.

By definition, $\psi$ also stabilizes $\mathfrak{n}^+$ and induces the identity on $\hat{\mathfrak{p}}^+ / \mathfrak{n}^+$; hence so does the map $n \circ \psi$. But we also know that $n \circ \psi$ is defined on $\hat{\mathfrak{p}}_1 \cap \hat{\mathfrak{p}}_2 = \hat{\mathfrak{l}}$, and sends it onto
\[n \circ \psi(\hat{\mathfrak{p}}_1 \cap \hat{\mathfrak{p}}_2) = n(\hat{\mathfrak{p}}_1 \cap \hat{\mathfrak{p}}'_2) = l^{-1}(\hat{\mathfrak{l}}) = \hat{\mathfrak{l}}.\]
Hence the map $n \circ \psi$ is the identity on $\hat{\mathfrak{l}}$. It follows that $\overline{\psi} = \phi' \circ \psi = {l \circ n \circ \psi = l}$ (in restriction to $\hat{\mathfrak{l}}$), hence $\overline{\psi}$ is a quasi-translation as required.
\end{proof}

\subsection{Metric properties}
\label{sec:metric}

Here we introduce some conventions and define two important metric properties of $\mathbb{R}$-regular maps: $C$-non-degeneracy (which means that the geometry of the map is not too close to a degenerate case), and contraction strength.

We introduce on $\hat{\mathfrak{g}}$ a Euclidean norm such that the subspaces $\mathfrak{n}^+$, $\mathfrak{n}^-$, $\mathfrak{d}$, $\mathfrak{z}$ and $\mathbb{R}_0$ are pairwise orthogonal, and whose restriction to $\mathfrak{d}$ agrees with the Killing form up to sign (indeed the latter is negative definite on $\mathfrak{d} \subset \mathfrak{m} \subset \mathfrak{k}$). For any linear map $g$ acting on $\hat{\mathfrak{g}}$, we write $\|g\| := \sup_{x \neq 0} \frac{\|g(x)\|}{\|x\|}$ its operator norm.

Consider a Euclidean space $E$ (for the moment, the reader may suppose that $E = \hat{\mathfrak{g}}$; later we will also need the case $E = \ext^p \hat{\mathfrak{g}}$ for some integer $p$). We introduce on the projective space $\mathbb{P}(E)$ a metric by setting, for every $\overline{x}, \overline{y} \in \mathbb{P}(E)$,
\[\alpha (\overline{x}, \overline{y}) := \arccos \frac{| \langle x, y \rangle |}{\|x\| \|y\|} \in \textstyle [0, \frac{\pi}{2}],\]
where $x$ and $y$ are any vectors representing respectively $\overline{x}$ and $\overline{y}$ (obviously, the value does not depend on the choice of $x$ and $y$). This measures the angle between the lines $\overline{x}$ and $\overline{y}$. For shortness' sake, we will usually simply write $\alpha(x, y)$ with $x$ and $y$ some actual vectors in $E \setminus \{0\}$.

For any vector subspace $F \subset E$ and any radius $\eps > 0$, we shall denote the $\eps$-neighborhood of $F$ in $\mathbb{P}(E)$ by:
\[B_{\mathbb{P}}(F, \eps) := \setsuch{x \in \mathbb{P}(E)}{\alpha(x,\mathbb{P}(F)) < \eps}.\]
(You may think of it as a kind of "conical neighborhood".)

Consider a metric space $(\mathcal{M}, \delta)$; let $X$ and $Y$ be two subsets of $\mathcal{M}$. We shall denote the ordinary, minimum distance between $X$ and $Y$ by
\[\delta(X, Y) := \inf_{x \in X} \inf_{y \in Y} \delta(x, y),\]
as opposed to the Hausdorff distance, which we shall denote by
\[\delta^\mathrm{Haus}(X, Y) := \max\left( \sup_{x \in X} \delta\big(\{x\}, Y\big),\; \sup_{y \in Y} \delta\big(\{y\}, X\big) \right).\]

Finally, we introduce the following notation. Let $A$ and $B$ be two positive quantities, and $p_1, \ldots, p_k$ some parameters. Whenever we write
\[A \lesssim_{p_1, \ldots, p_k} B,\]
we mean that there is a constant $K$, depending on nothing but $p_1, \ldots, p_k$, such that $A \leq KB$. (If we do not write any subscripts, this means of course that $K$ is an "absolute" constant --- or at least, that it does not depend on any "local" parameters; we consider the "global" parameters such as the choice of $G$ and of the Euclidean norms to be fixed once and for all.) Whenever we write
\[A \asymp_{p_1, \ldots, p_k} B,\]
we mean that $A \lesssim_{p_1, \ldots, p_k} B$ and $B \lesssim_{p_1, \ldots, p_k} A$ at the same time.

\begin{definition}
\label{regular_definition}
Take a pair of affine m.p.a.'s $(\hat{\mathfrak{p}}_1, \hat{\mathfrak{p}}_2)$. An \emph{optimal canonizing map} for this pair is a map $\phi \in G \ltimes \mathfrak{g}$ satisfying
\[\phi(\hat{\mathfrak{p}}_1, \hat{\mathfrak{p}}_2) = (\hat{\mathfrak{p}}^+, \hat{\mathfrak{p}}^-)\]
and minimizing the quantity $\max \left( \|\phi\|, \|\phi^{-1}\| \right)$. By Claim \ref{pair_transitivity} and a compactness argument, such a map exists iff $\hat{\mathfrak{p}}_1$ and $\hat{\mathfrak{p}}_2$ are transverse.

We define an \emph{optimal canonizing map} for an $\mathbb{R}$-regular map $g \in G \ltimes \mathfrak{g}$ to be an optimal canonizing map for the pair $(A^\subge_g, A^\suble_g)$.

Let $C \geq 1$. We say that a pair of affine m.p.a.'s $(\hat{\mathfrak{p}}_1, \hat{\mathfrak{p}}_2)$ (resp. an $\mathbb{R}$\nobreakdash-\hspace{0pt}regular map $g$) is \emph{$C$-non-degenerate} if it has an optimal canonizing map $\phi$ such that $\left \|\phi^{\pm 1} \right\| \leq C$.

Now take $g_1$, $g_2$ two $\mathbb{R}$-regular maps in $G \ltimes \mathfrak{g}$. We say that the pair $(g_1, g_2)$ is \emph{$C$-non-degenerate} if every one of the four possible pairs $(A^{\subge}_{g_i}, A^{\suble}_{g_j})$ is $C$\nobreakdash-\hspace{0pt}non-degenerate.
\end{definition}

The point of this definition is that there are a lot of calculations in which, when we treat a $C$-non-degenerate pair of spaces as if they were perpendicular, we err by no more than a (multiplicative) constant depending on~$C$. The following result will often be useful:

\begin{lemma}
\label{bounded_norm_is_bilipschitz}
Let $C \geq 1$. Then any map $\phi \in \GL(E)$ such that $\|\phi^{\pm 1}\| \leq C$ induces a $C^2$-Lipschitz continuous map on $\mathbb{P}(E)$.
\end{lemma}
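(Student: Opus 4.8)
The plan is to reduce the statement to an estimate on angles between lines. Let $x, y \in E \setminus \{0\}$ be two nonzero vectors, representing points $\overline{x}, \overline{y} \in \mathbb{P}(E)$; I want to bound $\alpha(\phi x, \phi y)$ in terms of $\alpha(x, y)$. Write $\beta := \alpha(x, y)$ and $\gamma := \alpha(\phi x, \phi y)$. Since everything is scale-invariant, I may normalize $\|x\| = \|y\| = 1$, and moreover replace $y$ by $-y$ if needed so that $\langle x, y \rangle \geq 0$, i.e. $\langle x, y \rangle = \cos\beta$. The key geometric observation is that $\gamma = \alpha(\phi x, \phi y)$ is at most the angle (in the ordinary, unsigned sense) between the vectors $\phi x$ and $\phi y$, and there is a clean way to bound the latter: decompose $y = (\cos\beta)\, x + (\sin\beta)\, x^{\perp}$ with $x^{\perp}$ a unit vector orthogonal to $x$. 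Then $\phi y = (\cos\beta)\,\phi x + (\sin\beta)\,\phi x^{\perp}$, so $\phi y$ lies within distance $(\sin\beta)\|\phi x^{\perp}\| \leq C \sin\beta$ of the line $\mathbb{R}\phi x$, while $\|\phi x\| \geq \|\phi^{-1}\|^{-1} \geq C^{-1}$ (since $\|x\| = 1$). The sine of the angle between $\phi y$ and the line $\mathbb{R}\phi x$ is the ratio of these, so $\sin\gamma \leq C^2 \sin\beta$.

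From $\sin\gamma \leq C^2 \sin\beta$ I then deduce the Lipschitz bound $\gamma \leq C^2 \beta$, using that $\alpha$ takes values in $[0, \frac{\pi}{2}]$. On $[0, \frac{\pi}{2}]$ the function $\sin$ is concave, hence $\sin t \geq \frac{2}{\pi} t$, and of course $\sin t \leq t$; so $\frac{2}{\pi}\gamma \leq \sin\gamma \leq C^2 \sin\beta \leq C^2 \beta$, which gives $\gamma \leq \frac{\pi}{2} C^2 \beta$ — off by a factor $\frac{\pi}{2}$ from the claim. To get the sharp constant $C^2$, I argue instead directly with the concavity of $\sin$: the inequality $\sin\gamma \leq C^2\sin\beta$ together with $C \geq 1$ and $\beta, \gamma \in [0,\frac{\pi}{2}]$ forces $\gamma \leq C^2\beta$, because if we had $\gamma > C^2 \beta$ then (distinguishing the case $C^2\beta \geq \frac{\pi}{2}$, where $\sin\gamma \leq 1 \leq \sin(C^2\beta)$ already, from the case $C^2\beta < \frac{\pi}{2}$, where one uses that $t \mapsto \frac{\sin t}{t}$ is decreasing on $(0,\frac{\pi}{2}]$ to get $\sin\gamma > \sin(C^2\beta) \geq C^2\sin\beta$) we contradict the bound. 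Hence $\alpha(\phi x, \phi y) \leq C^2 \alpha(x, y)$ for all $x, y$, which is exactly the assertion that the induced map on $\mathbb{P}(E)$ is $C^2$-Lipschitz.

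The main obstacle is not any single estimate — each step is elementary — but rather being careful about two points: first, that the relevant quantity $\gamma$ really is bounded by the \emph{unsigned} angle between the representatives $\phi x$ and $\phi y$ (and that passing to representatives with $\langle x,y\rangle \geq 0$ is harmless, since $\alpha$ only sees lines); and second, extracting the clean constant $C^2$ rather than $\frac{\pi}{2}C^2$ from the sine inequality, which is where the monotonicity of $\frac{\sin t}{t}$ on $(0,\frac{\pi}{2}]$ does the work. Everything else is the orthogonal decomposition $y = (\cos\beta)x + (\sin\beta)x^\perp$ and the two bounds $\|\phi x^\perp\| \leq C$ and $\|\phi x\| \geq C^{-1}$, both immediate from $\|\phi^{\pm 1}\| \leq C$.
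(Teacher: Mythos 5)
Your reduction to the inequality $\sin\gamma \leq C^2 \sin\beta$ is essentially sound (one small slip: the sine of the angle between $\phi y$ and the line $\mathbb{R}\phi x$ is the distance from $\phi y$ to that line divided by $\|\phi y\|$, not by $\|\phi x\|$; since $\|\phi y\| \geq C^{-1}$ for the same reason, the bound survives). The genuine gap is in the final sharpening step, where you pass from $\sin\gamma \leq C^2\sin\beta$ to $\gamma \leq C^2\beta$. The decrease of $t \mapsto \frac{\sin t}{t}$ on $(0,\frac{\pi}{2}]$ gives, for $\beta \leq C^2\beta \leq \frac{\pi}{2}$, the inequality $\frac{\sin(C^2\beta)}{C^2\beta} \leq \frac{\sin\beta}{\beta}$, i.e.\ $\sin(C^2\beta) \leq C^2\sin\beta$ --- the \emph{reverse} of the inequality $\sin(C^2\beta) \geq C^2\sin\beta$ that your contradiction argument invokes. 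And the implication you want is simply false: take $C^2 = 1.2$ and $\beta = 0.5$, so that $C^2\sin\beta \approx 0.5753$; the sine inequality then allows $\gamma = \arcsin(0.5753) \approx 0.613$, which exceeds $C^2\beta = 0.6$. What your argument honestly yields is $\frac{2}{\pi}\gamma \leq \sin\gamma \leq C^2\sin\beta \leq C^2\beta$, i.e.\ the Lipschitz constant $\frac{\pi}{2}C^2$ that you mention before attempting the upgrade.

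To obtain the constant $C^2$ one needs an infinitesimal estimate rather than a comparison of endpoint sines, and this is what the paper's proof does: restrict $\phi$ to the $2$-plane $P$ spanned by $x$ and $y$ (which changes neither the angles nor the bounds on the operator norms), use the singular value decomposition to reduce to $\restr{\phi}{P} = \Diag(s_1, s_2)$ up to isometries of source and target, and note that the induced map on angles is $\theta \mapsto \arctan\bigl(\tfrac{s_2}{s_1}\tan\theta\bigr)$, whose derivative is everywhere bounded by $\max(s_1/s_2,\, s_2/s_1) \leq \|\phi\|\|\phi^{-1}\| \leq C^2$; integrating gives the Lipschitz bound with the exact constant $\|\phi\|\|\phi^{-1}\|$ claimed in the paper. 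I would add that the weaker constant $\frac{\pi}{2}C^2$ your argument does establish would in fact suffice for every application of this lemma later in the paper (all such uses are up to multiplicative constants depending only on $C$, with only cosmetic changes to explicit quantities like $\eta = \frac{\pi}{2C^2}$), so the defect concerns the statement as literally phrased rather than anything downstream --- but as written your proof of the stated constant is incorrect.
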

\begin{proof}
It is sufficient to check this for the restriction of $\phi$ to every 2-dimensional subspace of $E$. But in 2-dimensional space, using singular value decomposition (see the proof of Lemma \ref{regular_to_proximal} (iii) for a definition), this identity is straightforward. In fact it turns out that the Lipschitz constant of $\phi$ acting on $\mathbb{P}(E)$ is exactly $\|\phi\| \|\phi^{-1}\|$.
\end{proof}

\begin{remark}
The set of transverse pairs of extended affine spaces is characterized by two open conditions: there is of course transversality of the spaces, but also the requirement that each space not be contained in $\mathfrak{g}$. What we mean here by "degeneracy" is failure of one of these two conditions. Thus the property of a pair $(\hat{\mathfrak{p}}_1, \hat{\mathfrak{p}}_2)$ being $C$-non-degenerate actually encompasses two properties.

First, it implies that the spaces $\hat{\mathfrak{p}}_1$ and $\hat{\mathfrak{p}}_2$ are transversal in a quantitative way. More precisely, this means that some continuous function that would vanish if the spaces were not transversal is bounded below. An example of such a function is the smallest non identically vanishing of the "principal angles" defined in the proof of Lemma \ref{regular_to_proximal} (iv).

Second, it implies that both $\hat{\mathfrak{p}}_1$ and $\hat{\mathfrak{p}}_2$ are "not too close" to the space~$\mathfrak{g}$ (in the same sense). In purely affine terms, this means that the affine spaces $\hat{\mathfrak{p}}_1 \cap \mathfrak{g}_{\Aff}$ and $\hat{\mathfrak{p}}_2 \cap \mathfrak{g}_{\Aff}$ contain points that are not too far from the origin.

Both conditions are necessary, and appeared in the previous literature (such as \cite{Mar87} and \cite{AMS02}); but so far, they have always been treated separately.
\end{remark}

\begin{definition}
\label{s_definition}
Let $s > 0$. For an $\mathbb{R}$-regular map $g \in G \ltimes \mathfrak{g}$, we say that $g$ is \emph{$s$-contracting} if we have:
\[\forall (x, y) \in V^{\subl}_{g} \times A^{\subge}_{g}, \quad
  \frac{\|g(x)\|}{\|x\|} \leq s\frac{\|g(y)\|}{\|y\|}.\]
(Note that by Corollary \ref{dynamical_spaces_description} the spaces $V^{\subl}_{g}$ and $A^{\subge}_{g}$ always have the same dimensions as $\mathfrak{n}^-$ and $\hat{\mathfrak{p}}^+$ respectively, hence they are nonzero.)

We define the \emph{strength of contraction} of $g$ to be the smallest number $s(g)$ such that $g$ is $s(g)$-contracting. In other words, we have
\[s(g) =
\left\| \restr{g}     {V^{\subl}_{g}}  \right\|
\left\| \restr{g^{-1}}{A^{\subge}_{g}} \right\|.\]
\end{definition}

In yet other words, $s(g)$ is the \emph{inverse} of the "singular value gap" between $V^\subl_g$ and~$A^\subge_g$ (see the proof of Lemma \ref{regular_to_proximal} (iii) for the definition of singular values). We chose the convention where a "strongly contracting" map has a \emph{small} value of $s$.

\begin{remark}
\label{contraction_strength_and_inverse}
Even though we will not use it, it is useful to keep in mind the following property. One can show that if $g$ is $C$-non-degenerate with $s(g) \leq 1$, we actually have $s(g^{-1}) \asymp_C s(g)$. Thus the apparent lack of symmetry in the definition (why take $V^{\subl}_{g}$ and~$A^{\subge}_{g}$ rather than $A^{\suble}_{g}$ and~$V^{\subg}_{g}$?) is not a real problem.
\end{remark}

\begin{remark}
\label{contraction_strength_grows}
Note that for any $\mathbb{R}$-regular map $g \in G \ltimes \mathfrak{g}$, we have
\[\log s(g^N) \underset{N \to \infty}{\sim} -N \log \rho,\]
where $\rho$ is the spectral gap of $g$ between $V^{\subl}_{g}$ and $A^{\subge}_{g}$. By definition, $\rho > 0$; it follows that
\[s(g^N) \underset{N \to \infty}{\to} 0.\]
\end{remark}

\subsection{Comparison of metric properties in the affine and linear case}
\label{sec:affine_to_linear}

For any map $f \in G \ltimes \mathfrak{g}$, we denote by $\ell(f)$ the linear part of $f$, seen as an element of $G \ltimes \mathfrak{g}$ by identifying $G$ with the stabilizer of the "origin" $\mathbb{R}_0$. In other words, for every $(x, t) \in \mathfrak{g} \oplus \mathbb{R}_0 = \hat{\mathfrak{g}}$, we set
\[\ell(f)(x, t) = f(x, 0) + (0, t).\]
(Seeing $G$ as a subgroup of $G \ltimes \mathfrak{g}$ allows us to avoid introducing new definitions of $C$-non-degeneracy and contraction strength for elements of $G$.)
\begin{lemma}
\label{affine_to_vector}
Let $C \geq 1$, and take any $C$-non-degenerate $\mathbb{R}$-regular map $g$ (or pair of maps $(g, h)$) in $G \ltimes \mathfrak{g}$. Then:
\begin{enumerate}[(i)]
\item The map $\ell(g)$ (resp. the pair $(\ell(g), \ell(h))$) is still $C$-non-degenerate;
\item We have $s(\ell(g)) \leq s(g)$;
\item Suppose that $s(g^{-1}) \leq 1$. Then we actually have
\[s(g) \asymp_C s(\ell(g)) \left\| \restr{g}{A^\sube_g} \right\|.\]
\end{enumerate}
\end{lemma}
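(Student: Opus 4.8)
The plan is to exploit one structural fact: since $g$ already preserves $\mathfrak{g}$, the linear part $\ell(g)$ is the block operator $\restr{g}{\mathfrak{g}} \oplus \Id_{\mathbb{R}_0}$ on $\hat{\mathfrak{g}} = \mathfrak{g} \oplus \mathbb{R}_0$. From this I read off $V^{\subg}_{\ell(g)} = V^{\subg}_g$, $V^{\subl}_{\ell(g)} = V^{\subl}_g$ and $A^{\sube}_{\ell(g)} = V^{\sube}_g \oplus \mathbb{R}_0$, hence $A^{\subge}_{\ell(g)} = V^{\subge}_g \oplus \mathbb{R}_0$ and $A^{\suble}_{\ell(g)} = V^{\suble}_g \oplus \mathbb{R}_0$; in particular $\dim A^{\sube}_{\ell(g)} = \dim A^{\sube}_g$, so $\ell(g)$ is again $\mathbb{R}$-regular. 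I will also use that any $\phi \in G \ltimes \mathfrak{g}$ preserves $\mathfrak{g}$ with $\restr{\phi}{\mathfrak{g}} = \restr{\ell(\phi)}{\mathfrak{g}}$, that $\ell(\phi) = \restr{\phi}{\mathfrak{g}} \oplus \Id_{\mathbb{R}_0}$ so $\|\ell(\phi)^{\pm1}\| \le \|\phi^{\pm1}\|$, and that $\ell$ is a homomorphism.

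For (i) I would take an optimal canonizing map $\phi$ for $g$ (resp. for each of the four pairs $(A^{\subge}_{g_i}, A^{\suble}_{g_j})$), so $\|\phi^{\pm1}\| \le C$, and check that $\ell(\phi)$ canonizes $\ell(g)$: indeed $\phi$ carries $V^{\subge}_g = A^{\subge}_g \cap \mathfrak{g}$ onto $\hat{\mathfrak{p}}^+ \cap \mathfrak{g} = \mathfrak{p}^+$, while $\ell(\phi)$ fixes $\mathbb{R}_0$, so $\ell(\phi)$ carries $A^{\subge}_{\ell(g)} = V^{\subge}_g \oplus \mathbb{R}_0$ onto $\hat{\mathfrak{p}}^+$, and symmetrically for the $\suble$-spaces; since $\|\ell(\phi)^{\pm1}\| \le C$, the optimal canonizing map for $\ell(g)$ also has norm $\le C$. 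For (ii), the same observation gives $\restr{\ell(g)}{V^{\subl}_{\ell(g)}} = \restr{g}{V^{\subl}_g}$ and, since $V^{\subge}_g \subset \mathfrak{g} \perp \mathbb{R}_0$ and both pieces are $\ell(g)$-invariant, $\|\restr{\ell(g)^{-1}}{A^{\subge}_{\ell(g)}}\| = \max\!\big(\|\restr{g^{-1}}{V^{\subge}_g}\|,\,1\big)$; both terms are $\le \|\restr{g^{-1}}{A^{\subge}_g}\|$ (the first by monotonicity of restriction, the second because $\restr{g^{-1}}{A^{\sube}_g}$ has spectral radius $1$), so $s(\ell(g)) \le s(g)$.

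For (iii) I would first conjugate into canonical position: replacing $g$ by $\phi g \phi^{-1}$ (and $\ell(g)$ by $\ell(\phi)\ell(g)\ell(\phi)^{-1}$) for an optimal canonizing $\phi$ multiplies each operator norm entering $s(g)$, $s(g^{-1})$, $s(\ell(g))$ and $\|\restr{g}{A^{\sube}_g}\|$ by a factor in $[C^{-2}, C^2]$, so it is enough to treat $g$ with $A^{\subge}_g = \hat{\mathfrak{p}}^+$, $A^{\suble}_g = \hat{\mathfrak{p}}^-$ (so $V^{\subg}_g = \mathfrak{n}^+$, $V^{\subl}_g = \mathfrak{n}^-$, $A^{\sube}_g = \hat{\mathfrak{l}}$, and $g \in L \ltimes \mathfrak{l}$ by Claim \ref{u_l_invariant_d_fixed}) under the weakened hypothesis $s(g^{-1}) \lesssim_C 1$. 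Now $\mathfrak{n}^+,\mathfrak{n}^-,\mathfrak{l},\mathbb{R}_0$ are pairwise orthogonal and $g$-invariant, so the norm of $g^{\pm1}$ on a direct sum of them is the maximum over summands; using $A^{\subge}_g = \mathfrak{n}^+ \oplus \hat{\mathfrak{l}}$, $A^{\subge}_{g^{-1}} = A^{\suble}_g = \mathfrak{n}^- \oplus \hat{\mathfrak{l}}$ and $A^{\subge}_{\ell(g)} = \hat{\mathfrak{p}}^+ = \mathfrak{n}^+ \oplus \mathfrak{l} \oplus \mathbb{R}_0$ this yields
\begin{align*}
s(g) &= \|\restr{g}{\mathfrak{n}^-}\| \cdot \max\!\big(\|\restr{g^{-1}}{\mathfrak{n}^+}\|,\ \|\restr{g^{-1}}{\hat{\mathfrak{l}}}\|\big),\\
s(g^{-1}) &= \|\restr{g^{-1}}{\mathfrak{n}^+}\| \cdot \max\!\big(\|\restr{g}{\mathfrak{n}^-}\|,\ \|\restr{g}{\hat{\mathfrak{l}}}\|\big),\\
s(\ell(g)) &= \|\restr{g}{\mathfrak{n}^-}\| \cdot \max\!\big(\|\restr{g^{-1}}{\mathfrak{n}^+}\|,\ \|(\restr{g}{\mathfrak{l}})^{-1}\|,\ 1\big).
\end{align*}
Then I would invoke Corollary \ref{V=_translation} and Proposition \ref{quasi-translation}: $\restr{g}{\hat{\mathfrak{l}}}$ is a quasi-translation lying in $(\Orth(\mathfrak{d}) \ltimes \mathfrak{d}) \times \mathfrak{z}$, so $\restr{g}{\mathfrak{l}}$ is an isometry (whence $\|(\restr{g}{\mathfrak{l}})^{-1}\| = 1$) and $\restr{g^{-1}}{\hat{\mathfrak{l}}}$ is a quasi-translation with a translation part of the same lengths (whence $\|\restr{g^{-1}}{\hat{\mathfrak{l}}}\| \asymp \|\restr{g}{\hat{\mathfrak{l}}}\|$), both norms being $\ge 1$ since $\restr{g}{\hat{\mathfrak{l}}} = \restr{g}{A^{\sube}_g}$ has spectral radius $1$. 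Finally, the hypothesis together with the second factor of $s(g^{-1})$ being $\ge 1$ forces $\|\restr{g^{-1}}{\mathfrak{n}^+}\| \lesssim_C 1$; substituting, $s(\ell(g)) \asymp_C \|\restr{g}{\mathfrak{n}^-}\|$ and $s(g) \asymp_C \|\restr{g}{\mathfrak{n}^-}\| \cdot \|\restr{g}{\hat{\mathfrak{l}}}\| \asymp_C s(\ell(g)) \cdot \|\restr{g}{A^{\sube}_g}\|$.

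The main obstacle lives entirely in (iii); parts (i) and (ii) are bookkeeping of dynamical subspaces and routine conjugation estimates. The load-bearing point is the combined use of Proposition \ref{quasi-translation} — to get that $\restr{g}{\hat{\mathfrak{l}}}$ and $\restr{g^{-1}}{\hat{\mathfrak{l}}}$ have comparable norms and that $g$ costs nothing on the rotational part $\mathfrak{l}$ — and of the hypothesis $s(g^{-1}) \le 1$, whose only function is to keep $\|\restr{g^{-1}}{\mathfrak{n}^+}\|$ bounded so that it is negligible against $\|\restr{g^{-1}}{\hat{\mathfrak{l}}}\| \ge 1$; without it the three quantities $s(g)$, $s(\ell(g))$ and $\|\restr{g}{A^{\sube}_g}\|$ are not comparable at all. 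I expect the fiddliest step to be making the "factor in $[C^{-2},C^2]$" reduction precise for all four quantities at once, i.e.\ tracking which dynamical subspace each of $\phi$ and $\ell(\phi)$ sends to which standard subspace.
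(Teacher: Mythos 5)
Your proof is correct and follows essentially the same route as the paper: (i) and (ii) are the same bookkeeping with $\ell(\phi)$ and the orthogonal splitting $V^{\subge}_g\oplus\mathbb{R}_0$, and in (iii) you use exactly the paper's key ingredients — the quasi-translation structure of $\restr{g}{A^{\sube}_g}$ (so that $\|\restr{g^{-1}}{\hat{\mathfrak{l}}}\|\asymp\|\restr{g}{\hat{\mathfrak{l}}}\|$ and the linear part costs nothing on $\mathfrak{l}$) together with $s(g^{-1})\leq 1$ to make $\|\restr{g^{-1}}{\mathfrak{n}^+}\|$ negligible — the only cosmetic difference being that you conjugate $g$ into canonical position first rather than inserting the canonizing map where needed. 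The one microscopic slip is the claim $\|\ell(\phi)^{\pm1}\|\leq\|\phi^{\pm1}\|$, which should be $\|\ell(\phi)^{\pm1}\|\leq\max(\|\phi^{\pm1}\|,1)$; since $C\geq 1$ this changes nothing.
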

The proof of the first two points is just a formal verification, and contains no surprises.
\begin{proof} \mbox{ }
\begin{enumerate}[(i)]
\item We will show the result only for one map $g$; for a pair of maps the reasoning is analogous.

Let $\phi$ be some optimal canonizing map for $g$. Then clearly $A^{\subge}_{\ell(g)} = V^{\subge}_{g} \oplus \mathbb{R}_0$, and
\begin{align*}
\ell(\phi)(A^{\subge}_{\ell(g)})
  &= \ell(\phi)(V^{\subge}_{g} \oplus \mathbb{R}_0) \\
  &= \phi(V^{\subge}_{g}) \oplus \mathbb{R}_0 \\
  &= \hat{\mathfrak{p}}^+;
\end{align*}
similarly, $\ell(\phi)(A^{\suble}_{\ell(g)}) = \hat{\mathfrak{p}}^-$. Thus $\ell(\phi)$ is a canonizing map for $\ell(g)$. On the other hand, we have
\begin{align*}
\left \|{\ell(\phi)} \right\|
  &= \max \left( \left \|{\restr{\phi}{\mathfrak{g}}} \right\| , 1 \right) \\
  &\leq \max \left( \left \|{\phi} \right\| , 1 \right) \\
  &\leq \max(C, 1),
\end{align*}
and similarly for $\phi^{-1}$. As $C \geq 1$, we get that $\ell(g)$ is $C$\hyph{}non\hyph{}degenerate.

\item We have:
\begin{align*}
s(\ell(g))
  &= \left\| \restr{\ell(g)}{V^{\subl}_{\ell(g)}}  \right\|
     \left\| \restr{\ell(g)^{-1}}{A^{\subge}_{\ell(g)}} \right\| \\
  &= \left\| \restr{g}{V^{\subl}_{g}}  \right\|
     \max\left( \left\| \restr{g^{-1}}{V^{\subge}_{g}} \right\|, 1 \right) \\
  &\leq \left\| \restr{g}{V^{\subl}_{g}}  \right\|
        \max\left( \left\| \restr{g^{-1}}{A^{\subge}_{g}} \right\|, 1 \right) \\
  &= s(g).
\end{align*}
To justify the last equality, note that $V^\sube_g \subset A^{\subge}_{g}$ is nonzero by Corollary \ref{dynamical_spaces_description}, and that all eigenvalues of $g^{-1}$ restricted to the former subspace have modulus~1, hence $\left\| \restr{g^{-1}}{A^{\subge}_{g}} \right\|  \geq \left\| \restr{g^{-1}}{V^{\sube}_{g}} \right\| \geq 1$.

\item We have, by definition:
\[s(g) =
    \left\| \restr{g}{V^{\subl}_{g}}  \right\|
    \left\| \restr{g^{-1}}{A^{\subge}_{g}} \right\|.\]
Let $\phi$ be an optimal canonizing map for $g$. Since $g$ is $C$\hyph{}non\hyph{}degenerate (and $\phi(A^{\sube}_{g}) = \hat{\mathfrak{l}}$ is orthogonal to $\phi(V^{\subg}_{g}) = \mathfrak{n}^+$, the latter equality following from Corollary~\ref{dynamical_spaces_description}), it follows that
\[s(g) \asymp_C
    \left\| \restr{g}     {V^{\subl}_{g}}  \right\|
    \max \left(
    \left\| \restr{g^{-1}}{A^{\sube}_{g}}  \right\|,
    \left\| \restr{g^{-1}}{V^{\subg}_{g}}  \right\|
    \right).\]
Clearly we have $\left\| \restr{g^{-1}}{A^{\sube}_{g}} \right\| \geq \left\| \restr{g^{-1}}{V^{\sube}_{g}} \right\| \geq 1$ (see previous point). On the other hand, since $s(g^{-1}) \leq 1$, we have $\left\| \restr{g^{-1}}{V^{\subg}_{g}}  \right\| \leq 1$. It follows that
\[s(g) \asymp_C
    \left\| \restr{g}     {V^{\subl}_{g}} \right\|
    \left\| \restr{g^{-1}}{A^{\sube}_{g}} \right\|.\]
By Corollary \ref{V=_translation}, the conjugate of $\restr{g}{A^{\sube}_{g}}$ by $\phi$ is a quasi-translation. By Proposition \ref{quasi-translation} characterizing quasi-translations, we may write
\[\phi \restr{g}{A^{\sube}_{g}} \phi^{-1} = \tau_v \rho,\]
where $\rho$ is an orthogonal automorphism of the subspace $\mathfrak{d}$, and $\tau_v$ is the translation by some vector $v \in \mathfrak{l}$. Since $\rho$ preserves the Euclidean norm (it preserves the Killing form, and by convention they agree on $\mathfrak{d}$), it has no influence on the operator norm; and clearly $\|\tau_v\| = \|\tau_{-v}\|$. It follows that $\|\rho^{-1} \tau_v^{-1}\| = \|\tau_{-v}\| = \|\tau_v\| = \|\tau_v \rho\|$, hence $\left\| \restr{g^{-1}}{A^{\sube}_{g}} \right\| \asymp_C \left\| \restr{g}{A^{\sube}_{g}} \right\|$. Thus we get
\[s(g) \asymp_C
    \left\| \restr{g}{V^{\subl}_{g}} \right\|
    \left\| \restr{g}{A^{\sube}_{g}} \right\|.\]
A similar estimate holds for $\ell(g)$; but since $\ell(g)$ restricted to $A^{\sube}_{\ell(g)}$ has no translation part, the second factor disappears:
\[s(\ell(g)) \asymp_C
    \left\| \restr{\ell(g)}{V^{\subl}_{g}} \right\|.\]
Since $g$ and $\ell(g)$ coincide on $V^{\subl}_{g}$, we conclude that
\[s(g) \asymp_C
    s(\ell(g))\left\| \restr{g}{A^{\sube}_{g}} \right\|\]
as required. \qedhere
\end{enumerate}
\end{proof}

\section{$\mathbb{R}$-regularity of products}
\label{sec:regular_product}

The goal of this section is to prove Proposition \ref{regular_product}, which essentially states in a quantitative way that under some conditions, the product of two $\mathbb{R}$-regular maps is still $\mathbb{R}$-regular.

\subsection{Proximal case}
\label{sec:proximal_product}
Let $E$ be a Euclidean space. (In practice, we will apply the results of this subsection to $E = \ext^p \hat{\mathfrak{g}}$ for some integer $p$.)

Our first goal is to show Proposition \ref{proximal_product}, which is analogous to Proposition \ref{regular_product} (and will be used to prove it), but with proximal maps instead of $\mathbb{R}$-regular ones. We begin with a few definitions.

\begin{definition}
\label{proximal_definition}
Let $\gamma \in \GL(E)$. Let $\lambda$ be an eigenvalue of $\gamma$ with maximal modulus. We say that $\gamma$ is \emph{proximal} if $\lambda$ is unique and has multiplicity~1. We may then decompose~$E$ into a direct sum of a line $E^s_\gamma$, called its \emph{attracting space}, and a hyperplane $E^u_\gamma$, called its \emph{repelling space}, both stable by $\gamma$ and such that:
\[\begin{cases}
\restr{\gamma}{E^s_\gamma} = \lambda \Id \\
\text{for every eigenvalue } \mu \text{ of } \restr{\gamma}{E^u_\gamma},\; |\mu| < |\lambda|.
\end{cases}\]
\end{definition}

\begin{definition}
Consider a line $E^s$ and a hyperplane $E^u$ of $E$, transverse to each other. An \emph{optimal canonizing map} for the pair $(E^s, E^u)$ is a map $\phi \in GL(E)$ satisfying
\[\phi(E^s) \perp \phi(E^u)\]
and minimizing the quantity $\max \left( \|\phi\|, \|\phi^{-1}\| \right)$.

We define an \emph{optimal canonizing map} for a proximal map $\gamma \in \GL(E)$ to be an optimal canonizing map for the pair $(E^s_\gamma, E^u_\gamma)$.

Let $C \geq 1$. We say that the pair formed by a line and a hyperplane $(E^s, E^u)$ (resp. that a proximal map $\gamma$) is \emph{$C$-non-degenerate} if it has an optimal canonizing map $\phi$ such that $\left \|\phi^{\pm 1} \right\| \leq C$. (The condition for a pair is equivalent to the condition that the angle between $E^s$ and $E^u$ is larger than or equal to $2\arctan(C^{-2})$.)

Now take $\gamma_1, \gamma_2$ two proximal maps in $\GL(E)$. We say that the pair $(\gamma_1, \gamma_2)$ is \emph{$C$-non-degenerate} if every one of the four possible pairs $(E^s_{\gamma_i}, E^u_{\gamma_j})$ is $C$-non-degenerate.
\end{definition}

\begin{definition}
\label{s_tilde_definition}
Let $\gamma \in \GL(E)$ be a proximal map. We define the \emph{strength of contraction} of $\gamma$ by
\[\tilde{s}(\gamma) := \frac{\left\| \restr{\gamma}{E^u_\gamma} \right\|}{|\lambda|};\]
we say that $\gamma$ is \emph{$\tilde{s}$-contracting} if $\tilde{s}(\gamma) \leq \tilde{s}$.
\end{definition}

Note that this definition is different from the one we used in the context of $\mathbb{R}$-regular maps (hence the new notation $\tilde{s}$).

\begin{proposition}
\label{proximal_product}
For every $C \geq 1$, there is a positive constant $\tilde{s}_1(C)$ with the following property. Take a $C$-non-degenerate pair of proximal maps $\gamma_1, \gamma_2$ in $\GL(E)$, and suppose that both $\gamma_1$ and $\gamma_2$ are $\tilde{s}_1(C)$-contracting. Then $\gamma_1 \gamma_2$ is proximal, and we have:
\begin{samepage}
\begin{enumerate}[(i)]
\item $\alpha \left(E^s_{\gamma_1 \gamma_2},\; E^s_{\gamma_1} \right) \lesssim_C \tilde{s}(\gamma_1)$;
\item $\tilde{s}(\gamma_1 \gamma_2) \lesssim_C \tilde{s}(\gamma_1)\tilde{s}(\gamma_2)$.
\end{enumerate}
\end{samepage}
\end{proposition}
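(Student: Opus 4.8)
The plan is to reduce everything to the dynamics of $\gamma_1,\gamma_2$ on the projective space $\mathbb{P}(E)$, via a quantitative "North--South dynamics" estimate for a single proximal map. First I would record the following: if $\gamma$ is proximal, $C$-non-degenerate and $\tilde s$-contracting, then conjugating by an optimal canonizing map $\phi$ (so $\|\phi^{\pm 1}\| \leq C$) reduces the analysis to the orthogonal model $E^s_\gamma \perp E^u_\gamma$, $\restr{\gamma}{E^s_\gamma} = \lambda\Id$. From this model one reads off, for any $b > 0$ and any $x$ with $\alpha(x, E^u_\gamma) \geq b$: (a) $\alpha(\gamma x, E^s_\gamma) \lesssim_{C,b} \tilde s(\gamma)$; (b) $\|\gamma x\| \asymp_{C,b} |\lambda|\,\|x\|$; (c) $\gamma$ contracts the metric $\alpha$ on $\{x : \alpha(x, E^u_\gamma) \geq b\}$ by a factor $\lesssim_{C,b} \tilde s(\gamma)$. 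The same conjugation gives, for the two largest singular values, $\mu_1(\gamma) \asymp_C |\lambda|$ and $\mu_2(\gamma) \asymp_C \|\restr{\gamma}{E^u_\gamma}\|$, whence the handy characterization
\[\tilde s(\gamma) \asymp_C \frac{\mu_2(\gamma)}{\mu_1(\gamma)} = \frac{\|\ext^2 \gamma\|}{\|\gamma\|^2}.\]

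Next comes the ping-pong argument yielding proximality of $\gamma_1\gamma_2$ together with (i). By $C$-non-degeneracy of the pair, $\alpha(E^s_{\gamma_1}, E^u_{\gamma_2})$ and $\alpha(E^s_{\gamma_2}, E^u_{\gamma_1})$ are $\gtrsim_C 1$; I fix $b = b(C) > 0$ below a suitable fraction of this bound, then choose $\tilde s_1(C)$ so small that the factor "$\lesssim_{C,b}\tilde s$" in (a) and (c) is $< b/2$ whenever $\tilde s \leq \tilde s_1(C)$. Then $\gamma_2$ sends the closed ball $\overline{B_{\mathbb{P}}(E^s_{\gamma_1}, b)}$ into $B_{\mathbb{P}}(E^s_{\gamma_2}, b/2)$, which is disjoint from $B_{\mathbb{P}}(E^u_{\gamma_1}, b)$, and then $\gamma_1$ sends it into $B_{\mathbb{P}}(E^s_{\gamma_1}, b/2) \subset \overline{B_{\mathbb{P}}(E^s_{\gamma_1}, b)}$, contracting $\alpha$ on this ball by a factor $\lesssim_C \tilde s(\gamma_1)\tilde s(\gamma_2) < 1$. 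So $\gamma_1\gamma_2$ has a unique attracting fixed point $\overline{x_+}$ there; combined with the complementary $\gamma_1\gamma_2$-invariant hyperplane obtained by running the same argument on $\transpose{\gamma_2}\transpose{\gamma_1}$ (whose pair is again $C$-non-degenerate since $E^s_{\transpose\gamma} = (E^u_\gamma)^\perp$ and the angle between a line and a hyperplane equals that between their orthogonals), this shows $\gamma_1\gamma_2$ is proximal, with $E^s_{\gamma_1\gamma_2} = \overline{x_+}$ and also $\alpha(E^u_{\gamma_1\gamma_2}, E^u_{\gamma_2}) \lesssim_C \tilde s(\gamma_2)$. For (i): since $\overline{x_+}$ is fixed, $\alpha(E^s_{\gamma_1\gamma_2}, E^s_{\gamma_1}) = \alpha(\gamma_1(\gamma_2 \overline{x_+}), E^s_{\gamma_1})$, and $\gamma_2\overline{x_+} \in B_{\mathbb{P}}(E^s_{\gamma_2}, b/2)$ is far from $E^u_{\gamma_1}$, so (a) applied to $\gamma_1$ gives $\lesssim_C \tilde s(\gamma_1)$.

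For (ii) I would first combine (i) and the dual bound $\alpha(E^u_{\gamma_1\gamma_2}, E^u_{\gamma_2}) \lesssim_C \tilde s(\gamma_2)$ with $\alpha(E^s_{\gamma_1}, E^u_{\gamma_2}) \gtrsim_C 1$ to conclude that, for $\tilde s_1(C)$ small enough, $\gamma_1\gamma_2$ is itself $\asymp_C$-non-degenerate, so $\mu_1(\gamma_1\gamma_2) \asymp_C |\lambda_{\gamma_1\gamma_2}|$. Then, applying (b) for $\gamma_2$ and then for $\gamma_1$ to a unit eigenvector $x_+ \in E^s_{\gamma_1\gamma_2}$ (far from $E^u_{\gamma_2}$ by (i); its image $\gamma_2 x_+$ far from $E^u_{\gamma_1}$ since it is near $E^s_{\gamma_2}$), I get $|\lambda_{\gamma_1\gamma_2}| = \|\gamma_1\gamma_2 x_+\| \asymp_C |\lambda_{\gamma_1}|\,|\lambda_{\gamma_2}| \asymp_C \|\gamma_1\|\,\|\gamma_2\|$, the last step by $\mu_1 \asymp_C |\lambda|$. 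Hence $\|\gamma_1\gamma_2\| \asymp_C \|\gamma_1\|\,\|\gamma_2\|$, and using the singular-value characterization together with submultiplicativity of $\ext^2$,
\[\tilde s(\gamma_1\gamma_2) \asymp_C \frac{\|\ext^2(\gamma_1\gamma_2)\|}{\|\gamma_1\gamma_2\|^2} \leq \frac{\|\ext^2\gamma_1\|\,\|\ext^2\gamma_2\|}{\|\gamma_1\gamma_2\|^2} \asymp_C \frac{\|\ext^2\gamma_1\|}{\|\gamma_1\|^2}\cdot\frac{\|\ext^2\gamma_2\|}{\|\gamma_2\|^2} \asymp_C \tilde s(\gamma_1)\tilde s(\gamma_2).\]

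The main obstacle I anticipate is the ping-pong bookkeeping: choosing $b(C)$ and then $\tilde s_1(C)$ so that the three single-map estimates (a), (b), (c) interlock correctly across both factors and across their transposes. Closely related is the lower bound $\|\gamma_1\gamma_2\| \gtrsim_C \|\gamma_1\|\,\|\gamma_2\|$ --- that no catastrophic cancellation occurs in the product --- which is exactly the point where the cross-term non-degeneracy of the pair $(E^s_{\gamma_1}, E^u_{\gamma_2})$ is indispensable. Everything else (the orthogonal model case, the singular-value characterization of $\tilde s$, and the transpose duality used to control the repelling hyperplane) should be routine once that model case is set up, and this whole section closely parallels Section 3 of \cite{Smi13}.
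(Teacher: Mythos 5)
Your ping-pong argument for proximality and for (i) is essentially the paper's: the paper packages your single-map estimates (a) and (c) into Lemma \ref{Lipschitz} (the Lipschitz constant of $\gamma$ on the complement of a conical neighbourhood of $E^u_\gamma$ is $\asymp_{C,\zeta}\tilde s(\gamma)$), runs the same two-set ping-pong, and invokes a lemma of Tits for the existence of the attracting fixed point; it controls $E^u_{\gamma_1\gamma_2}$ by a direct dynamical argument (a point of $E^u_{\gamma_1\gamma_2}$ outside $X_2^-$ would be attracted to $E^s_{\gamma_1\gamma_2}$) rather than by transposing, but that is cosmetic. Where you genuinely diverge is (ii). The paper gets it for free from the other half of Lemma \ref{Lipschitz}: the Lipschitz constant of a proximal non-degenerate map on a small neighbourhood of its attracting point is also $\gtrsim_C \tilde s$, so once $\gamma_1\gamma_2$ is known to be $C'$-non-degenerate one concludes $\tilde s(\gamma_1\gamma_2)\lesssim_C\mathcal{L}\left(\gamma_1\gamma_2,\,\mathbb{P}(E)\setminus X_2^-\right)\lesssim_C\tilde s(\gamma_1)\tilde s(\gamma_2)$ --- no singular values, no norm estimates. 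Your route via $\tilde s\asymp_C\|\ext^2\gamma\|/\|\gamma\|^2$, submultiplicativity of $\ext^2$, and $\|\gamma_1\gamma_2\|\asymp_C\|\gamma_1\|\,\|\gamma_2\|$ also works and is the standard Cartan-decomposition approach; its cost is exactly the extra step you single out (no cancellation of top singular values in the product, which needs the cross non-degeneracy), its benefit is that the quantitative content is isolated in purely linear-algebraic form. One point must be stated with care: the two-sided estimate $\tilde s(\gamma)\asymp_C\mu_2(\gamma)/\mu_1(\gamma)$ is only valid when $\tilde s(\gamma)\lesssim_C1$ (otherwise $\mu_1$ need not be comparable to $|\lambda|$, and $\mu_2$ need not detect $\|\restr{\gamma}{E^u_\gamma}\|$), and for $\gamma_1\gamma_2$ this is not known in advance. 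The fix is to phrase the inequality you actually apply to the product as $\tilde s(\gamma_1\gamma_2)\lesssim_{C}\|\ext^2(\gamma_1\gamma_2)\|/|\lambda_{\gamma_1\gamma_2}|^2$, which holds unconditionally for a non-degenerate proximal map because $|\lambda|\cdot\|\restr{\gamma}{E^u_\gamma}\|$ is always among the singular values of $\ext^2$ of the orthogonalized model, and then substitute $|\lambda_{\gamma_1\gamma_2}|\asymp_C\|\gamma_1\gamma_2\|$, which your eigenvector computation supplies. With that rephrasing your argument closes.
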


Before proceeding, we need the following technical lemma, which says roughly that a proximal map $\gamma$ is strongly contracting in the sense of Definition \ref{s_tilde_definition} if and only if it is strongly Lipschitz-contracting on some subset of the projective space $\mathbb{P}(E)$.

For any set $X \subset \mathbb{P}(E)$, we introduce the following notation for the Lipschitz constant of $\gamma$ restricted to $X$:
\[\mathcal{L}(\gamma, X) :=
\sup_{\substack{(x, y) \in X^2 \\ x \neq y}}
\frac{\alpha (\gamma(x), \gamma(y))}{\alpha (x, y)}.\]

\begin{lemma}
\label{Lipschitz}
For any $C \geq 1$, $\zeta \in \;]0, \frac{\pi}{2}[$, for any proximal $C$-non-degenerate map $\gamma$, we have:
\begin{samepage}
\begin{subequations}
\label{eq:Lipschitz}
  \begin{equation}
  \label{eq:Lipschitz2}
    \mathcal{L} \left( \gamma,\; B_{\mathbb{P}} (E^s_\gamma, \zeta) \right)
\asymp_{C, \zeta} \tilde{s}(\gamma)
  \end{equation}
  \begin{equation}
  \label{eq:Lipschitz1}
    \mathcal{L} \left( \gamma,\; \mathbb{P}(E) \setminus B_{\mathbb{P}} (E^u_\gamma, \zeta) \right) \asymp_{C, \zeta} \tilde{s}(\gamma).
  \end{equation}
\end{subequations}
\end{samepage}
\end{lemma}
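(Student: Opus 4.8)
===

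The plan is to reduce everything to a single clean model: by applying an optimal canonizing map $\phi$ with $\|\phi^{\pm 1}\| \leq C$, we may assume $E^s_\gamma = \mathbb{R}e_1$ and $E^u_\gamma = e_1^\perp = \mathrm{span}(e_2,\dots,e_n)$, at the cost of distorting all angles by a factor in $[C^{-2}, C^2]$ (by Lemma \ref{bounded_norm_is_bilipschitz}) and distorting $\tilde s(\gamma)$ by a bounded factor (since $\|\phi^{\pm1}\|\le C$ changes operator norms and the top eigenvalue modulus by bounded factors; note $|\lambda|$ is conjugation-invariant, so really only $\|\restr{\gamma}{E^u_\gamma}\|$ moves, and only by a factor $C^2$). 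Writing $s := \tilde s(\gamma)$ in this normalized picture, $\gamma$ maps $e_1 \mapsto \lambda e_1$ and contracts $E^u_\gamma$: for $x \in E^u_\gamma$, $\|\gamma x\| \le s|\lambda|\,\|x\|$. Since $\zeta$-conical neighborhoods are preserved up to a change of $\zeta$ controlled by $C$, it suffices to prove both estimates in this orthogonal model with constants depending only on $\zeta$.

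Next I would carry out the core projective computation. Parametrize a line $\overline x$ near $E^s_\gamma$ by $x = e_1 + u$ with $u \in E^u_\gamma$; then $\alpha(\overline x, E^s_\gamma) \asymp \|u\|$ for $u$ in a bounded region, and two such lines $x = e_1 + u$, $y = e_1 + w$ satisfy $\alpha(\overline x,\overline y) \asymp \|u - w\|$ when $\|u\|,\|w\|$ are bounded (say $\le \tan\zeta$). Applying $\gamma$: $\gamma x = \lambda e_1 + \gamma u$, so $\gamma \overline x$ corresponds to $e_1 + \lambda^{-1}\gamma u$, and $\alpha(\gamma\overline x, \gamma\overline y) \asymp \|\lambda^{-1}\gamma(u-w)\| \le s\|u - w\| \asymp s\,\alpha(\overline x,\overline y)$, giving the upper bound $\mathcal L(\gamma, B_\mathbb{P}(E^s_\gamma,\zeta)) \lesssim_{C,\zeta} \tilde s(\gamma)$. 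For the matching lower bound, pick a unit vector $u_0 \in E^u_\gamma$ realizing (up to a factor $2$) the operator norm $\|\restr{\gamma}{E^u_\gamma}\| = s|\lambda|$; then test the pair $\overline{e_1}$ and $\overline{e_1 + \epsilon u_0}$ for small $\epsilon$, for which the ratio of angles after/before applying $\gamma$ is $\asymp \|\lambda^{-1}\gamma u_0\| = s$ (here I need that $\gamma u_0$ is not itself too close to $E^s_\gamma = \mathbb{R}e_1$ — but $E^u_\gamma$ is $\gamma$-stable in the model, so $\gamma u_0 \in E^u_\gamma \perp e_1$, and this is automatic). This yields \eqref{eq:Lipschitz2}. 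For \eqref{eq:Lipschitz1}, the set $\mathbb{P}(E)\setminus B_\mathbb{P}(E^u_\gamma,\zeta)$ consists of lines making angle $\ge \zeta$ with the hyperplane $E^u_\gamma$, i.e. lines $\overline{e_1 + u}$ with $\|u\| \le \cot\zeta$ bounded — the same parametrized region up to relabeling $\zeta$ — so the identical computation applies; one checks the image lands in a region where the comparison $\alpha(\cdot,\cdot) \asymp \|\cdot - \cdot\|$ is still valid, using that $\gamma$ contracts toward $\mathbb{R}e_1$ so images are even closer to $E^s_\gamma$.

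I expect the main obstacle to be bookkeeping the three nested layers of "$\asymp$ up to constants": (a) the passage from the general $C$-non-degenerate pair to the orthogonal model via $\phi$, tracking how $\zeta$ must be replaced by some $\zeta' \asymp_{C}\zeta$ and confirming the $\tilde s$ estimate survives conjugation; (b) the comparison $\alpha(\overline x, \overline y) \asymp \|u-w\|$, which is only valid in a bounded region of the $u$-chart and requires care near the boundary of that region (angles close to $\pi/2$), precisely the reason both statements carry the hypothesis $\zeta < \pi/2$ and the neighborhoods are taken of the correct space (of $E^s_\gamma$ in the first, of the \emph{complement} of a neighborhood of $E^u_\gamma$ in the second); and (c) making the lower bound genuinely work — producing an explicit pair of lines on which the Lipschitz ratio is $\gtrsim_{C,\zeta}\tilde s(\gamma)$ — rather than just the upper bound, which is the easy direction. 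None of this is deep, but it is the kind of estimate where one must be disciplined about which constants depend on what.
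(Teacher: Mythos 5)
Your proposal is correct and follows essentially the same route as the paper: conjugate by the optimal canonizing map to reduce to the orthogonal model, then use the affine chart $\overline{e_1+u}\mapsto u$ on $\mathbb{P}(E)\setminus\mathbb{P}(E^u_\gamma)$, which is bilipschitz with constant depending only on $\zeta$ on the relevant conical neighborhoods, and observe that the conjugated map is linear with operator norm exactly $\tilde{s}(\gamma)$ (your explicit test pair $\overline{e_1},\overline{e_1+\eps u_0}$ is just the realization of that operator norm). The only cosmetic difference is that the paper phrases the lower bound as the Lipschitz constant of the conjugate linear map rather than exhibiting the pair of lines.
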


We shall actually only use the $\gtrsim$ part of \eqref{eq:Lipschitz2} and the $\lesssim$ part of \eqref{eq:Lipschitz1}. Note that clearly whenever $X \subset Y$ we have $\mathcal{L}(\gamma, X) \leq \mathcal{L}(\gamma, Y)$; hence for these two inequalities, it is sufficient to restrict our attention to small values of $\zeta$. The idea of this Lemma is that knowing just the Lipschitz constant of~$\gamma$ on a tiny neighborhood of its attracting space allows us to control $\tilde{s}(\gamma)$; but knowing $\tilde{s}(\gamma)$ actually allows us to control the Lipschitz constant of~$\gamma$ almost everywhere, except for a tiny neighborhood of its repelling space (that we have no hope to control).

\begin{proof}
Let $C \geq 1$, $\zeta \in \;]0, \frac{\pi}{2}[$. Consider a $C$-non-degenerate proximal map $\gamma$; let $\phi$ be an optimal canonizing map for $\gamma$. Then without loss of generality, we may replace $\gamma$ by~$\gamma' := \phi \gamma \phi^{-1}$. Indeed $\tilde{s}(\gamma) \asymp_C \tilde{s}(\gamma')$ is obvious. As for the other side, by Lemma \ref{bounded_norm_is_bilipschitz}, we have $\mathcal{L}(\phi, \mathbb{P}(E)) \asymp_C 1$, hence $\mathcal{L}(\gamma, X) \asymp_C \mathcal{L}(\gamma', \phi(X))$ for any set $X$. We also have
\begin{align*}
\phi \left( B_{\mathbb{P}} (E^s_\gamma,\; \zeta) \right)\; &\supset\; B_{\mathbb{P}} (E^s_{\gamma'},\; C^{-2} \zeta),\\
\phi \left( \mathbb{P}(E) \setminus B_{\mathbb{P}} (E^u_\gamma,\; \zeta) \right)\; &\subset\; \mathbb{P}(E) \setminus B_{\mathbb{P}} (E^u_{\gamma'},\; C^{-2} \zeta) ;
\end{align*}
and as remarked previously, $X \subset Y$ always implies $\mathcal{L}(\gamma, X) \leq \mathcal{L}(\gamma, Y)$.

It remains to show that for any $\zeta' \in \;]0, \frac{\pi}{2}[$, we have
\[\mathcal{L} \left( \gamma',\; B_{\mathbb{P}} (E^s_{\gamma'}, \zeta') \right)
\asymp_{\zeta'} \tilde{s}(\gamma')\]
(this implies \eqref{eq:Lipschitz2} by taking $\zeta' = C^{-2}\zeta$, and \eqref{eq:Lipschitz1} by taking $\zeta' > \frac{\pi}{2} - C^{-2}\zeta$).
Indeed, consider the projection
\[
\xymatrix@R=3pt{
  \pi_u: \mathbb{P}(E) \setminus \mathbb{P}(E^u_{\gamma'})
           \ar@{->}[r] &
         E^u_{\gamma'} \\
         \overline{x}
           \ar@{|->}[r] &
         \frac{x_u}{x_s},
}
\]
where $x_u$ and $x_s$ denote the components of $x$ in the decomposition $E = E^u_{\gamma'} \oplus E^s_{\gamma'}$ (and to make sense of division by $x_s$, we choose an isometrical identification of $E^s_{\gamma'}$ with $\mathbb{R}$). Since $E^s_{\gamma'}$ and $E^u_{\gamma'}$ are, by construction, orthogonal, it induces a homeomorphism from $B_{\mathbb{P}}(E^s_{\gamma'}, \zeta')$ to the ball $\setsuch{x \in E^u_{\gamma'}}{\|x\| \leq \tan \zeta'}$. A straightforward calculation shows that the said homeomorphism is bilipschitz, with a Lipschitz constant $K(\zeta')$ that does not at all depend on $\gamma$ or $C$. On the other hand, the Lipschitz constant of the conjugate map $\pi_u \gamma' \pi_u^{-1}$ (which is linear) is nothing other than $\tilde{s}(\gamma')$. Hence $\gamma'$ is Lipschitz-continuous with constant $K(\zeta')^2\tilde{s}(\gamma')$. The conclusion follows.
\end{proof}

\begin{proof}[Proof of Proposition \ref{proximal_product}]
Let $C \geq 1$, and let $(\gamma_1, \gamma_2)$ be a $C$-non-degenerate pair of $\tilde{s}_1(C)$-contracting proximal maps (for a value $\tilde{s}_1(C)$ to be specified later). Then by Lemma \ref{bounded_norm_is_bilipschitz}, for every $i$ and $j$ we have $\alpha(E^s_{\gamma_i}, E^u_{\gamma_j}) \geq \eta$ where we set $\eta := \frac{\pi}{2C^2}$.

An immediate corollary of Lemma \ref{Lipschitz} is that for every $C$-non-degenerate proximal map $\gamma$ and every $\zeta \leq \eta$, we have
\begin{equation}
\label{eq:eps6}
\gamma \left( \mathbb{P}(E) \setminus B_{\mathbb{P}}(E^u_\gamma, \zeta) \right) \subset B_{\mathbb{P}}\left( E^s_\gamma,\; K\left( C, \zeta \right)\tilde{s}(\gamma) \right)
\end{equation}
for some constant $K(C, \zeta)$. Indeed, $E^s_\gamma \in \mathbb{P}(E) \setminus B_{\mathbb{P}}(E^u_\gamma, \zeta)$ is a fixed point of $\gamma$ and $\mathrm{diam}(\mathbb{P}(E) \setminus B_{\mathbb{P}}(E^u_\gamma, \zeta)) \leq \frac{\pi}{2} \lesssim 1$.

For $i = 1, 2$, we introduce the numbers $\eta_i := K(C, \frac{\eta}{3})\tilde{s}(\gamma_i)$ and the sets
\[\begin{cases}
X_i^+ := B_{\mathbb{P}}(E^s_{\gamma_i}, \eta_i) \\
X_i^- := B_{\mathbb{P}}(E^u_{\gamma_i}, \frac{\eta}{3}).
\end{cases}\]
Then by \eqref{eq:eps6}, for every $i$ we have $\gamma_i(\mathbb{P}(E) \setminus X_i^-) \subset X_i^+$. Since $\tilde{s}(\gamma_i) \leq \tilde{s}_1(C)$, if we choose $\tilde{s}_1(C)$ small enough, we may suppose that $\eta_i \leq \frac{\eta}{3}$. Then these four sets are pairwise disjoint: for every $i$ and $j$, we have $X_i^+ \subset \mathbb{P}(E) \setminus X_j^-$. In particular, it follows that
\[\gamma_1 \gamma_2 \left( \mathbb{P}(E) \setminus X_2^- \right) \subset X_1^+.\]

Now by \eqref{eq:Lipschitz1}, we know that for every $i$
\begin{equation}
\label{eq:lip_less_s}
\mathcal{L} \left( \gamma_i,\; \mathbb{P}(E) \setminus X_i^- \right) \lesssim_{C} \tilde{s}(\gamma_i) \leq \tilde{s}_1(C).
\end{equation}
Once again, choosing $\tilde{s}_1(C)$ small enough, we may actually suppose that
\[\mathcal{L} \left( \gamma_i,\; \mathbb{P}(E) \setminus X_i^- \right) < 1.\]
Since $X_1^+ \subset \mathbb{P}(E) \setminus X_2^-$, it follows that $X_1^+$ is stable by $\gamma_1 \gamma_2$ and that
\[\mathcal{L} \left( \gamma_1 \gamma_2,\; X_1^+ \right) < 1.\]

We deduce from this that $\gamma_1 \gamma_2$ is proximal and $E^s_{\gamma_1 \gamma_2} \in X_1^+$ (see \cite{Tits72}, Lemma 3.8 for a proof), which settles the inequality (i). On the other hand, it is easy to see that $E^u_{\gamma_1 \gamma_2} \subset X_2^-$ (indeed, consider any point $x \in \mathbb{P}(E)$ belonging to $E^u_{\gamma_1 \gamma_2}$ but not to $X_2^-$: then we would have $\lim_{n \to \infty} (\gamma_1 \gamma_2)^n(x) = E^s_{\gamma_1 \gamma_2}$, which contradicts the fact that $E^u_{\gamma_1 \gamma_2}$ is a stable subspace). It follows that
\begin{align*}
\alpha(E^s_{\gamma_1 \gamma_2}, E^u_{\gamma_1 \gamma_2})
  &\geq \textstyle
    \alpha(E^s_{\gamma_1}, E^u_{\gamma_2}) - \eta_1 - \frac{\eta}{3}\\
  &\geq \textstyle
    \eta - \frac{\eta}{3} - \frac{\eta}{3}\\
  &= \textstyle \frac{\eta}{3}.
\end{align*}
Clearly, this implies that $\gamma_1 \gamma_2$ is $C'$-non-degenerate for some constant $C'$ that depends only on $\eta$, hence only on $C$.

This allows us to apply \eqref{eq:Lipschitz2} to $\gamma_1 \gamma_2$:
\[\textstyle
\tilde{s}(\gamma_1 \gamma_2) \lesssim_{C} \mathcal{L} \left( \gamma_1 \gamma_2,\; B_{\mathbb{P}}(E^s_{\gamma_1 \gamma_2}, \frac{\eta}{3}) \right).\]
We know that $B_{\mathbb{P}}(E^s_{\gamma_1 \gamma_2}, \frac{\eta}{3}) \subset B_{\mathbb{P}}(E^s_{\gamma_1}, \frac{2\eta}{3}) \subset \mathbb{P}(E) \setminus X_2^-$, hence
\[\textstyle
\mathcal{L} \left( \gamma_1 \gamma_2,\; B_{\mathbb{P}}(E^s_{\gamma_1 \gamma_2}, \frac{\eta}{3}) \right)
   \leq \mathcal{L} \left( \gamma_1 \gamma_2,\; \mathbb{P}(E) \setminus X_2^- \right).\]
On the other hand, from \eqref{eq:lip_less_s}, it follows that
\[\mathcal{L} \left( \gamma_1 \gamma_2,\; \mathbb{P}(E) \setminus X_2^- \right)
  \lesssim_{C} \tilde{s}(\gamma_1)\tilde{s}(\gamma_2).\]
Stringing together these inequalities, we get
\[\tilde{s}(\gamma_1 \gamma_2) \lesssim_{C} \tilde{s}(\gamma_1)\tilde{s}(\gamma_2);\]
thus (ii) is also proved.
\end{proof}

\subsection{$\mathbb{R}$-regular case}
\label{sec:proximal_to_regular}

The following proposition estimates the position of dynamical spaces and the contraction strength for a product of two sufficiently contracting $\mathbb{R}$-regular maps forming a non-degenerate pair.

\begin{proposition}
\label{regular_product}
For every $C \geq 1$, there is a positive constant $s_1(C) \leq 1$ with the following property. Take any $C$-non-degenerate pair $(g, h)$ of $\mathbb{R}$-regular maps in $G \ltimes \mathfrak{g}$; suppose that the maps $g^{\pm 1}$ and $h^{\pm 1}$ are all $s_1(C)$-contracting. Then $gh$ is $\mathbb{R}$-regular, $2C$-non-degenerate, and we have:
\begin{enumerate}[(i)]
\addtolength{\itemsep}{.5\baselineskip}
\item $\begin{cases}
\alpha^\mathrm{Haus} \left(A^{\subge}_{gh},\; A^{\subge}_{g} \right) \lesssim_C s(g) \vspace{1mm} \\
\alpha^\mathrm{Haus} \left(A^{\suble}_{gh},\; A^{\suble}_{h} \right) \lesssim_C s(h^{-1})
\end{cases}$;
\item $s(gh) \lesssim_C s(g)s(h)$.
\end{enumerate}
\end{proposition}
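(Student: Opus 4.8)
The plan is to reduce the $\mathbb{R}$-regular statement to the proximal statement (Proposition \ref{proximal_product}) by passing to exterior powers, exactly as in Section 3 of \cite{Smi13}. The key observation is that if $g$ is $\mathbb{R}$-regular with $A^\subge_g$ of dimension $p := \dim \hat{\mathfrak{p}}^+$, then the induced map $\ext^p g$ on $E := \ext^p\hat{\mathfrak{g}}$ is proximal, with attracting line $E^s_{\ext^p g} = \ext^p A^\subge_g$ and repelling hyperplane determined by $A^\suble_g$ (more precisely, $E^u_{\ext^p g}$ is the span of those coordinate subspaces transverse to the line). I would first record a lemma (call it "regular to proximal", matching the forward reference in Lemma \ref{bounded_norm_is_bilipschitz} and the non-degeneracy remarks) establishing: (a) $\ext^p g$ is proximal; (b) $\tilde s(\ext^p g) \asymp_C s(g)$ when $g$ is $C$-non-degenerate; (c) $C$-non-degeneracy of $g$ implies $C'(C)$-non-degeneracy of the pair $(E^s_{\ext^p g}, E^u_{\ext^p g})$, and conversely one can recover $A^\subge_g$ from $E^s_{\ext^p g}$ (it is the unique $p$-dimensional subspace whose exterior power is that line) and $A^\suble_g$ from $E^u_{\ext^p g}$; and (d) the projective distance $\alpha$ between $p$-dimensional subspaces is comparable, via Plücker embedding, to $\alpha$ between their exterior powers, at least in a $C$-controlled regime — this is where $\alpha^\mathrm{Haus}$ on the affine m.p.a.'s gets translated into $\alpha$ on lines in $E$.

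With that dictionary in place, the proof of Proposition \ref{regular_product} goes as follows. Given the $C$-non-degenerate pair $(g,h)$ of $s_1(C)$-contracting $\mathbb{R}$-regular maps, set $\gamma_1 := \ext^p g$ and $\gamma_2 := \ext^p h$. By the lemma, $(\gamma_1, \gamma_2)$ is a $C'(C)$-non-degenerate pair of proximal maps, and $\tilde s(\gamma_i) \asymp_C s(g), s(h)$ respectively; choosing $s_1(C)$ small enough that $\tilde s(\gamma_i) \leq \tilde s_1(C'(C))$, Proposition \ref{proximal_product} applies. It gives that $\ext^p g \ext^p h = \ext^p(gh)$ is proximal, that $E^s_{\ext^p(gh)} = \ext^p A^\subge_{gh}$ is close to $E^s_{\ext^p g} = \ext^p A^\subge_g$ (giving half of (i) after translating back), and that $\tilde s(\ext^p(gh)) \lesssim_C \tilde s(\gamma_1)\tilde s(\gamma_2)$, which gives $s(gh) \lesssim_C s(g)s(h)$, i.e. (ii). The fact that $gh$ is $\mathbb{R}$-regular follows because its dynamical space $A^\sube_{gh}$ has the minimal dimension: the proximality of $\ext^p(gh)$ forces $\dim A^\subge_{gh} = p$. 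For the second line of (i) (the estimate on $A^\suble_{gh}$), I would apply the same argument to the inverses: $(gh)^{-1} = h^{-1}g^{-1}$, using that $A^\suble_{gh} = A^\subge_{(gh)^{-1}}$ and that $s((gh)^{-1}) \lesssim_C s(g^{-1})s(h^{-1})$ by the same reasoning, with the roles of $g$ and $h$ swapped so the "outer" factor is $h^{-1}$ — this is why the bound comes out as $s(h^{-1})$. The $2C$-non-degeneracy of $gh$ comes from the $C'$-non-degeneracy of $\gamma_1\gamma_2$ produced by Proposition \ref{proximal_product} together with the angle estimate at the end of its proof; one checks the constant can be taken to approach $C$ (hence $\leq 2C$) as $s_1(C) \to 0$, since the perturbation of the spaces is $O(s_1(C))$.

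The main obstacle I anticipate is the translation lemma between the $\mathbb{R}$-regular picture in $\hat{\mathfrak{g}}$ and the proximal picture in $\ext^p\hat{\mathfrak{g}}$, specifically part (d): showing that $\alpha^\mathrm{Haus}(A^\subge_{gh}, A^\subge_g)$ is controlled by $\alpha(\ext^p A^\subge_{gh}, \ext^p A^\subge_g)$ with a constant depending only on $C$. Going from subspaces to their Plücker lines is benign (Lipschitz), but the reverse direction — recovering Hausdorff closeness of the subspaces from closeness of a single pair of lines — is delicate and genuinely uses the non-degeneracy hypothesis to stay away from the locus where the Plücker map degenerates; this is precisely the content that in \cite{Smi13} occupies the analogue of Lemma \ref{regular_to_proximal}. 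A secondary bookkeeping nuisance is that $E^u_{\ext^p g}$ is not literally $\ext^p A^\suble_g$ but rather the sum of the "wrong" coordinate blocks in the decomposition induced by $\hat{\mathfrak{g}} = V^\subge_g \oplus V^\subl_g$ (up to the extra $\mathbb{R}_0$ summand), so the identification of repelling hyperplanes, and hence the non-degeneracy of the pair $(E^s, E^u)$ in terms of that of $(A^\subge, A^\suble)$, requires a careful but routine linear-algebra check. None of this is conceptually new relative to \cite{Smi13}; the work is in carrying the $C$-dependence through cleanly.
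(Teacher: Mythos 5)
Your proposal follows essentially the same route as the paper: the paper's Lemma \ref{regular_to_proximal} is exactly the ``dictionary'' you describe (proximality of $\ext^p g$, identification of $E^s$ and $E^u$, transfer of non-degeneracy, comparability of $\tilde s(\ext^p g)$ with $s(g)$ under the assumption $s(g)\leq 1$, and the equivalence $\alpha^{\mathrm{Haus}}(A_1,A_2)\asymp\alpha(\ext^p A_1,\ext^p A_2)$, which in fact holds with absolute constants via principal angles), after which Proposition \ref{proximal_product} is applied to $\gamma_1\gamma_2$ and to $\gamma_2^{-1}\gamma_1^{-1}$ just as you indicate, and the $2C$-non-degeneracy is obtained from the perturbation estimates of (i) together with Lemma \ref{continuity_of_non_degeneracy}, matching your closing remark.
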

Recall that the distinction between $s(h^{-1})$ and $s(h)$ is not essential here: see Remark~\ref{contraction_strength_and_inverse}.

Before giving the proof, let us first formulate a particular case:

\begin{corollary}
\label{vector_spaces_estimate}
Under the same hypotheses, we have
\[\begin{cases}
\alpha^\mathrm{Haus} \left(V^{\subge}_{gh},\; V^{\subge}_{g} \right) \lesssim_C s(\ell(g)) \vspace{1mm} \\
\alpha^\mathrm{Haus} \left(V^{\suble}_{gh},\; V^{\suble}_{h} \right) \lesssim_C s(\ell(h)^{-1}).
\end{cases}\]
\end{corollary}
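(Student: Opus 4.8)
The strategy is to apply Proposition~\ref{regular_product} not to $(g,h)$ itself but to the pair of linear parts $(\ell(g),\ell(h))$, and then to translate the resulting estimates on the affine dynamical spaces of $\ell(g)$, $\ell(h)$ and $\ell(gh)$ back into estimates on the vector spaces $V^{\subge}_{\cdot}$ and $V^{\suble}_{\cdot}$. This is possible because the dynamical spaces of a linear part $\ell(f)$ are, by construction, just the vector dynamical spaces of $f$ with the line $\mathbb{R}_0$ appended.

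First I would check that $(\ell(g),\ell(h))$ satisfies the hypotheses of Proposition~\ref{regular_product} with the same constants. By Lemma~\ref{affine_to_vector}(i) the pair $(\ell(g),\ell(h))$ is again $C$-non-degenerate, and in particular $\ell(g)$ and $\ell(h)$ are $\mathbb{R}$-regular. By Lemma~\ref{affine_to_vector}(ii) applied in turn to $g$, $g^{-1}$, $h$, $h^{-1}$ (recalling that $\ell$ is a group homomorphism, so that $\ell(g)^{-1}=\ell(g^{-1})$ and $\ell(gh)=\ell(g)\ell(h)$), all four maps $\ell(g)^{\pm1}$ and $\ell(h)^{\pm1}$ are $s_1(C)$-contracting. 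Proposition~\ref{regular_product} then yields that $\ell(gh)=\ell(g)\ell(h)$ is $\mathbb{R}$-regular and that
\[
\alpha^{\mathrm{Haus}}\!\left(A^{\subge}_{\ell(gh)},A^{\subge}_{\ell(g)}\right)\lesssim_C s(\ell(g)),
\qquad
\alpha^{\mathrm{Haus}}\!\left(A^{\suble}_{\ell(gh)},A^{\suble}_{\ell(h)}\right)\lesssim_C s(\ell(h)^{-1}).
\]

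It then remains to pass from these affine dynamical spaces to the vector spaces in the claim. As observed in the proof of Lemma~\ref{affine_to_vector}(i), for any $f\in G\ltimes\mathfrak{g}$ the map $\ell(f)$ fixes $\mathbb{R}_0$ pointwise and acts on $\mathfrak{g}$ as the linear part of $f$, whence $A^{\subge}_{\ell(f)}=V^{\subge}_f\oplus\mathbb{R}_0$ and $A^{\suble}_{\ell(f)}=V^{\suble}_f\oplus\mathbb{R}_0$. So the corollary follows once we know that for subspaces $V,V'\subset\mathfrak{g}$ one has $\alpha^{\mathrm{Haus}}(V,V')\le\alpha^{\mathrm{Haus}}(V\oplus\mathbb{R}_0,\,V'\oplus\mathbb{R}_0)$. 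This uses only that $\mathfrak{g}$ and $\mathbb{R}_0$ are orthogonal for our chosen Euclidean norm on $\hat{\mathfrak{g}}$: for a unit vector $x\in V\subset\mathfrak{g}$, the orthogonal projection of $x$ onto $V'\oplus\mathbb{R}_0$ equals its projection onto $V'$, so $\alpha(x,\mathbb{P}(V'))=\alpha(x,\mathbb{P}(V'\oplus\mathbb{R}_0))$; taking the supremum over $x\in\mathbb{P}(V)\subset\mathbb{P}(V\oplus\mathbb{R}_0)$, and symmetrically with $V$ and $V'$ interchanged, gives the inequality. Combining this with the two displayed estimates and with the identities $A^{\subge}_{\ell(f)}=V^{\subge}_f\oplus\mathbb{R}_0$, $A^{\suble}_{\ell(f)}=V^{\suble}_f\oplus\mathbb{R}_0$ for $f\in\{g,h,gh\}$ gives exactly the two asserted bounds.

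There is no real obstacle here: the purpose of Lemma~\ref{affine_to_vector} was precisely to make this reduction to the linear case mechanical. The only point needing a moment's care is the elementary observation that appending the common orthogonal line $\mathbb{R}_0$ to two subspaces of $\mathfrak{g}$ cannot increase their Hausdorff angular distance, which is exactly where the orthogonality of $\mathfrak{g}$ and $\mathbb{R}_0$ built into the definition of the Euclidean norm is used.
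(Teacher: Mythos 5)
Your proof is correct and follows the same route as the paper: apply Proposition~\ref{regular_product} to the pair $(\ell(g),\ell(h))$, which satisfies the same hypotheses by Lemma~\ref{affine_to_vector}, and then identify the dynamical spaces of $\ell(f)$ with those of $f$. The only difference is that you spell out the (true, and tacitly used in the paper) observation that appending the orthogonal line $\mathbb{R}_0$ does not decrease the Hausdorff angular distance, which the paper leaves implicit.
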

\begin{proof}
If a pair $(g, h)$ satisfies the hypotheses of Proposition \ref{regular_product}, then Lemma \ref{affine_to_vector} shows that the pair $(\ell(g), \ell(h))$ still does. But for every $\mathbb{R}$\nobreakdash-\hspace{0pt}regular $f$, since $\ell(f)$ and $f$ have the same action on $\mathfrak{g}$, obviously we have $V^{\subge}_{\ell(f)} = V^{\subge}_{f}$ and $V^{\suble}_{\ell(f)} = V^{\suble}_{f}$.
\end{proof}

To prove Proposition \ref{regular_product}, we use the result of the previous subsection, by establishing a correspondence between $\mathbb{R}$-regularity and proximality in a suitable exterior power.

We introduce the integers:
\begin{align*}
p &:= \dim \hat{\mathfrak{p}}^+ = \dim \mathfrak{p}^+ + 1; \\
q &:= \dim \mathfrak{n}^-; \\
d &:= \dim \hat{\mathfrak{g}} = \dim \mathfrak{g} + 1 = q+p.
\end{align*}
For every $g \in G \ltimes \mathfrak{g}$, we may define its exterior power $\ext^p g: \ext^p \hat{\mathfrak{g}} \to \ext^p \hat{\mathfrak{g}}$. The Euclidean structure of $\hat{\mathfrak{g}}$ induces in a canonical way a Euclidean structure on $\ext^p \hat{\mathfrak{g}}$.

\begin{lemma} \mbox{ }
\label{regular_to_proximal}
\begin{enumerate}[(i)]
\item For $g \in G \ltimes \mathfrak{g}$, $\ext^{p} g$ is proximal iff $g$ is $\mathbb{R}$-regular. Moreover, the attracting (resp. repelling) space of $\ext^{p} g$ depends on nothing but $A^{\subge}_{g}$ (resp. $V^{\subl}_{g}$):
\begin{equation}
\label{eq:frame_transformation}
\begin{cases}
E^s_{\ext^{p} g} = \ext^{p} A^{\subge}_{g} \\
E^u_{\ext^{p} g} = \setsuch{x \in \ext^{p} \hat{\mathfrak{g}}}
                                        {x \wedge \ext^{q} V^{\subl}_{g} = 0}.
\end{cases}
\end{equation}
\item For every $C \geq 1$, whenever $(g_1, g_2)$ is a $C$-non-degenerate pair of $\mathbb{R}$\nobreakdash-\hspace{0pt}regular maps, $(\ext^p g_1, \ext^p g_2)$ is a $C^p$-non-degenerate pair of proximal maps.
\item For every $C \geq 1$, for every $C$-non-degenerate $\mathbb{R}$-regular map $g \in G \ltimes \mathfrak{g}$, we have
\[s(g) \lesssim_C \tilde{s}(\ext^{p} g).\]
If in addition $s(g) \leq 1$, we have
\[s(g) \asymp_C \tilde{s}(\ext^{p} g).\]
(Recall the Definitions \ref{s_definition} and \ref{s_tilde_definition} of the "contraction strengths" $s(g)$ and $\tilde{s}(\gamma)$, respectively.)
\item For any two $p$-dimensional subspaces $A_1$ and $A_2$ of $\hat{\mathfrak{g}}$, we have
\[\alpha^\mathrm{Haus}(A_1, A_2)
\;\asymp\; \alpha \left( \ext^{p} A_1,\; \ext^{p} A_2 \right).\]
\end{enumerate}
\end{lemma}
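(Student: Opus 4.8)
The plan is to express everything in terms of principal angles between the two $p$-dimensional subspaces $A_1, A_2 \subset \hat{\mathfrak{g}}$. Recall that for two subspaces of the same dimension $p$ one has principal angles $0 \le \theta_1 \le \cdots \le \theta_p \le \frac{\pi}{2}$; these are obtained, e.g., from the singular values $\cos\theta_1 \ge \cdots \ge \cos\theta_p$ of the map $\pi_{A_2}|_{A_1}$, where $\pi_{A_2}$ is the orthogonal projection onto $A_2$. The first observation is that the Hausdorff distance $\alpha^{\mathrm{Haus}}(A_1, A_2)$ in $\mathbb{P}(\hat{\mathfrak g})$ is exactly the largest principal angle $\theta_p$. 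Indeed, for a unit vector $x \in A_1$ the angle $\alpha(x, A_2)$ is minimized by decomposing $x$ in the basis realizing the principal angles, and the worst case is $\theta_p$; by symmetry the same bound governs the other direction of the Hausdorff distance, so $\alpha^{\mathrm{Haus}}(A_1,A_2) = \theta_p$.

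Next I would compute the left-hand side: the angle in $\mathbb{P}(\ext^p \hat{\mathfrak g})$ between the lines $\ext^p A_1$ and $\ext^p A_2$. Pick orthonormal bases $e_1,\dots,e_p$ of $A_1$ and $f_1,\dots,f_p$ of $A_2$ adapted to the principal angles, so that $\langle e_i, f_j\rangle = \delta_{ij}\cos\theta_i$ after an orthogonal change within each space (this is the singular value decomposition of $\pi_{A_2}|_{A_1}$). Then $\ext^p A_1$ is spanned by $e_1\wedge\cdots\wedge e_p$ and $\ext^p A_2$ by $f_1\wedge\cdots\wedge f_p$, both unit vectors for the induced Euclidean structure, and the Gram/Cauchy–Binet formula gives
\[
\bigl|\langle e_1\wedge\cdots\wedge e_p,\; f_1\wedge\cdots\wedge f_p\rangle\bigr|
= \bigl|\det(\langle e_i, f_j\rangle)_{i,j}\bigr|
= \prod_{i=1}^{p}\cos\theta_i.
\]
Hence $\alpha(\ext^p A_1, \ext^p A_2) = \arccos\bigl(\prod_{i=1}^p \cos\theta_i\bigr)$.

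It remains to compare $\arccos(\prod_i \cos\theta_i)$ with $\theta_p = \max_i\theta_i$, i.e. to show $\arccos\bigl(\prod_{i=1}^p\cos\theta_i\bigr) \asymp \max_i \theta_i$ with absolute constants (depending only on $p$, which is a global parameter). The inequality $\prod\cos\theta_i \le \cos\theta_p$ gives immediately $\arccos(\prod\cos\theta_i) \ge \theta_p$. For the reverse direction one uses that near $0$, $\arccos(1-t)\asymp\sqrt{t}$ and $1-\cos\theta\asymp\theta^2$, so $1 - \prod\cos\theta_i \le \sum_i(1-\cos\theta_i) \lesssim \sum_i \theta_i^2 \lesssim p\,\theta_p^2$, whence $\arccos(\prod\cos\theta_i) \lesssim \sqrt{p}\,\theta_p \lesssim_p \theta_p$; since both quantities are bounded by $\frac{\pi}{2}$, the estimate extends to all admissible angles by compactness (the ratio of the two continuous functions of $(\theta_1,\dots,\theta_p)$ is bounded above and below away from the locus $\theta_p=0$, and near that locus the asymptotic analysis just given applies). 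This yields $\alpha^{\mathrm{Haus}}(A_1,A_2)\asymp\alpha(\ext^p A_1,\ext^p A_2)$ as claimed.

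The only mildly delicate point — the "main obstacle" — is organizing the two-sided comparison cleanly: near the degenerate locus $\theta_p \to 0$ one must pass through the square-root asymptotics of $\arccos$ rather than work with $\arccos$ directly, while away from that locus one simply invokes continuity and compactness of the torus $[0,\frac\pi2]^p$. Everything else (principal angles, Cauchy–Binet, the identification of Hausdorff distance with the largest principal angle) is standard linear algebra.
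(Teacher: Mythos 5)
Your proposal addresses only part (iv) of the four-part lemma; for that part it is correct and follows essentially the same route as the paper: principal angles, the identity $\cos\alpha(\ext^p A_1,\ext^p A_2)=\prod_i\cos\theta_i$, and the identification of $\alpha^{\mathrm{Haus}}(A_1,A_2)$ with the largest principal angle. The only (cosmetic) difference is in the final elementary estimate: the paper proves $\arccos((\cos\theta)^p)\le\sqrt{p}\,\theta$ directly by integrating a convexity inequality for the tangent, whereas you go through $1-\prod\cos\theta_i\le\sum_i(1-\cos\theta_i)\lesssim p\,\theta_{\max}^2$ and the square-root asymptotics of $\arccos$ near $1$ plus compactness; both give the two-sided comparison with constants depending only on $p$. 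Parts (i)--(iii) would still need separate arguments.
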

\begin{proof} \mbox{ }
\begin{enumerate}[(i)]
\item Let $g \in G \ltimes \mathfrak{g}$. Let $\lambda_{1}, \ldots, \lambda_{d}$ be the eigenvalues of $g$ (acting on $\hat{\mathfrak{g}}$) counted with multiplicity and ordered by nondecreasing modulus. Then we know that the eigenvalues of $\ext^{p} g$ counted with multiplicity are exactly the products of the form $\lambda_{i_1}\cdots\lambda_{i_{p}}$, where $1 \leq i_1 < \ldots < i_{p} \leq d$. As the two largest of them (by modulus) are $\lambda_{q+1} \ldots \lambda_{d}$ and $\lambda_{q}\lambda_{q+2} \ldots \lambda_{d}$, it follows that $\ext^{p} g$ is proximal iff $|\lambda_{q}| < |\lambda_{q+1}|$.

On the other hand, by Claim \ref{Asubge_is_ampa} (i), we know that $\dim A^\sube_g \geq \dim \hat{\mathfrak{l}} = d-2p$, with equality iff $g$ is $\mathbb{R}$-regular. Now since the linear part of $g$ preserves the Killing form, the space $V^\subl_g$ is Killing-orthogonal to $V^\suble_g$, which is supplementary (in $\mathfrak{g}$) to $V^\subg_g$; hence $\dim V^\subl_g \leq \dim V^\subg_g$. By symmetry, we get $\dim V^\subl_g = \dim V^\subg_g$. It follows that $\dim V^\subl_g = \frac{1}{2}(d - \dim A^\sube_g) \leq q$, with equality iff $g$ is $\mathbb{R}$-regular.

In particular, we always have $|\lambda_{q+1}| = 1$. Putting everything together, we conclude that
\[\ext^{p} g \text{ is proximal} \;\iff\; |\lambda_{q}| < 1 \;\iff\; \dim V^\subl_g = q \;\iff\; g \text{ is $\mathbb{R}$-regular.}\]

As for the expression of $E^s$ and $E^u$, it follows immediately by considering a basis that trigonalizes $g$.

\item Take any pair of indices $(i, j) \in \{1, 2\}^2$. Let $\phi$ be an optimal canonizing map for the pair $(A^\subge_{g_i}, A^\suble_{g_j})$. Then we have $\phi(A^\subge_{g_i}) = \hat{\mathfrak{p}}^+$ and (by Claim \ref{Asubge_is_ampa} (iii)) $\phi(V^\subl_{g_j}) = \mathfrak{n}^-$. In the Euclidean structure we have chosen, $\hat{\mathfrak{p}}^+$ is orthogonal to $\mathfrak{n}^-$; hence $\ext^p \hat{\mathfrak{p}}^+$ is orthogonal to the hyperplane $\setsuch{x \in \ext^{p} \hat{\mathfrak{g}}}{x \wedge \ext^{q} \mathfrak{n}^- = 0}$. By the previous point, it follows that $\ext^p \phi$ is a canonizing map for the pair $(E^s_{\ext^p g_i}, E^u_{\ext^p g_j})$. As $\|\ext^p \phi\| \leq \|\phi\|^p$ and similarly for $\phi^{-1}$, the conclusion follows.

\item Let $C \geq 1$, and let $g \in G \ltimes \mathfrak{g}$ be a $C$-non-degenerate $\mathbb{R}$-regular map. First remark the following thing: let $\phi$ be an optimal canonizing map for $g$, and let $g' = \phi g \phi^{-1}$. Then it is clear that $s(g') \asymp_C s(g)$ and $\tilde{s}(\ext^p g') \asymp_C \tilde{s}(\ext^p g)$. Thus we may suppose that $V^{\subl}_{g}$, $A^{\sube}_{g}$ and $V^{\subg}_{g}$ are pairwise orthogonal.

We call \emph{singular values} of $g$ the square roots of the eigenvalues of the map $g^*g$ (where $g^*$ is the adjoint map, with respect to the Euclidean norm). Let $s_1 \leq \cdots \leq s_{p}$ (resp. $s'_1 \leq \cdots \leq s'_q$) be the singular values of $g$ restricted to $A^{\subge}_{g}$ (resp. $V^{\subl}_{g}$), so that $\left\| \restr{g^{-1}}{A^{\subge}_{g}} \right\| = s_1^{-1}$ and $\left\| \restr{g}{V^{\subl}_{g}} \right\| = s'_q$. Since the spaces $A^{\subge}_{g}$ and $V^{\subl}_{g}$ are stable by~$g$ and orthogonal, we get that the singular values of $g$ on the whole space $\hat{\mathfrak{g}}$ are
\[s'_1, \ldots, s'_q, s_1, \ldots, s_{p}\]
(note however that if we do not suppose $s(g) \leq 1$, this list might fail to be sorted in nondecreasing order.) On the other hand, we know that the singular values of $\ext^{p} g$ are products of $p$ distinct singular values of $g$. Since $E^s_{\ext^{p} g}$ is orthogonal to $E^u_{\ext^{p} g}$, we may once again analyze the singular values separately for each subspace. We know that the singular value corresponding to $E^s$ is equal to $s_1 \cdots s_{p}$; we deduce that $\left\| \restr{\ext^{p} g}{E^u} \right\|$ is equal to the maximum of the remaining singular values. In particular it is larger than or equal to $s'_q \cdot s_2 \cdots s_{p}$. On the other hand, if $\lambda$ is the largest eigenvalue of $\ext^{p} g$, then we have
\[|\lambda| = \left| \lambda_{q+1} \cdots \lambda_{d} \right|
            = \left| \det (\restr{g}{A^{\subge}_{g}}) \right|
            = s_1 \cdots s_p\]
(where $\lambda_1, \ldots, \lambda_{d}$ are the eigenvalues of $g$ sorted by nondecreasing modulus). It follows that:
\begin{equation}
\label{eq:s'_lower_bound}
\tilde{s}(\ext^{p} g)
   = \frac{\left\| \restr{\ext^{p} g}{E^u_{\ext^{p} g}} \right\|}
          {|\lambda|}
   \geq \frac{s'_q \cdot s_2 \cdots s_{p}}{s_1 \cdots s_{p}}
   = s'_q s_1^{-1}
   = s(g),
\end{equation}
which is the first estimate we were looking for.

Now suppose that $s(g) \leq 1$. Then we have $s'_q \leq s_1$, which means that the singular values of $\ext^{p} g$ are indeed sorted in the "correct" order. Hence $s'_q \cdot s_2 \cdots s_{p}$ is actually the largest singular value of $\restr{\ext^{p} g}{E^u}$, and the inequality becomes an equality: $\tilde{s}(\ext^{p} g) = s(g)$. The second estimate follows.

\item Let $A_1$ and $A_2$ be two $p$-dimensional subspaces of $\hat{\mathfrak{g}}$. Define
\begin{gather*}
\alpha_1 := \alpha^\mathrm{Haus}(A_1, A_2); \\
\alpha_2 := \alpha(\ext^p A_1, \ext^p A_2).
\end{gather*}
We may find an orthonormal basis $(e_1, \ldots, e_{d})$ of $\hat{\mathfrak{g}}$ such that the subspace~$A_1$ has basis $(e_1, \ldots, e_{p})$ and the subspace~$A_2$ has basis
\[ \left( (\cos \theta_i) e_i + (\sin \theta_i) e_{p+i} \right)_{1 \;\leq\; i \;\leq\; p},\]
for some angles $\frac{\pi}{2} \geq \theta_1 \geq \cdots \geq \theta_q \geq \theta_{q+1} = \cdots = \theta_{p} = 0$ (of course $e_j$ is not defined when $j > d$, but in this formula all such vectors have coefficient~0). In this case, we have $\alpha_1 = \theta_1$ and $\cos \alpha_2 = \prod_{i=1}^{p} \cos \theta_i$, hence
\[(\cos \alpha_1)^{p} \leq \cos \alpha_2 \leq \cos \alpha_1.\]
On the other hand, for every $\theta \in [0, \frac{\pi}{2}]$, we have $\arccos((\cos \theta)^{p}) \leq \sqrt{p}\;\theta$. Indeed, for $\theta \geq \frac{\pi}{2\sqrt{p}}$ this is obvious, and for $\theta \in [0, \frac{\pi}{2\sqrt{p}}]$ this is equivalent to the inequality $p \log \cos \theta \geq \log \cos (\sqrt{p} \theta)$. The latter is clearly true for $\theta = 0$, and follows for the other values of $\theta$ by integrating the inequality $-p \tan \theta \geq -\sqrt{p} \tan (\sqrt{p} \theta)$, which is true by convexity of the tangent function. Finally we get
\begin{samepage}
\[\alpha_1 \leq \alpha_2 \leq \sqrt{p}\;\alpha_1. \qedhere\]
\end{samepage}
\end{enumerate}
\end{proof}

We also need the following technical lemma:

\begin{lemma}
\label{continuity_of_non_degeneracy}
There is a constant $\eps > 0$ with the following property. Let $\hat{\mathfrak{p}}_1, \hat{\mathfrak{p}}_2$ be any two affine m.p.a.'s such that
\[\begin{cases}
\alpha^\mathrm{Haus}(\hat{\mathfrak{p}}_1, \hat{\mathfrak{p}}^+) \leq \eps \\
\alpha^\mathrm{Haus}(\hat{\mathfrak{p}}_2, \hat{\mathfrak{p}}^-) \leq \eps.
\end{cases}\]
Then they form a $2$-non-degenerate pair.
\end{lemma}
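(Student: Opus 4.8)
The plan is to realize the set $\mathcal{P}$ of transverse pairs of affine m.p.a.'s as a single $(G \ltimes \mathfrak{g})$-orbit (a homogeneous space), to construct near the base pair $(\hat{\mathfrak{p}}^+, \hat{\mathfrak{p}}^-)$ a continuous \emph{local section} $\sigma$ of the orbit map $o\colon \psi \mapsto (\psi(\hat{\mathfrak{p}}^+), \psi(\hat{\mathfrak{p}}^-))$ with $\sigma(\hat{\mathfrak{p}}^+, \hat{\mathfrak{p}}^-) = \Id$, and then to observe that any pair close to $(\hat{\mathfrak{p}}^+, \hat{\mathfrak{p}}^-)$ is straightened to $(\hat{\mathfrak{p}}^+, \hat{\mathfrak{p}}^-)$ by $\sigma(\cdot)^{-1}$, a map close to the identity, whose operator norm and that of its inverse are therefore close to $1$; this yields $2$-non-degeneracy. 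That $\mathcal{P}$ is a single orbit and that the stabilizer of $(\hat{\mathfrak{p}}^+, \hat{\mathfrak{p}}^-)$ equals $L \ltimes \mathfrak{l}$ are exactly Claims \ref{pair_transitivity} and \ref{u_l_invariant_d_fixed}; in particular $o$ is a submersion onto $\mathcal{P}$. Since transversality and the condition of not being contained in $\mathfrak{g}$ are both open, the topology of $\mathcal{P}$ near $(\hat{\mathfrak{p}}^+, \hat{\mathfrak{p}}^-)$ is the one induced by $\alpha^{\mathrm{Haus}}(\cdot, \hat{\mathfrak{p}}^+)$ and $\alpha^{\mathrm{Haus}}(\cdot, \hat{\mathfrak{p}}^-)$ on the two coordinates.

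For the section, I would use the ``big cell'' coming from the Bruhat and Langlands decompositions already invoked in Claims \ref{pair_transitivity}--\ref{u_l_invariant_d_fixed}: writing $U^\pm := N^\pm \ltimes \mathfrak{n}^\pm$ with $N^\pm := \exp \mathfrak{n}^\pm$, the product map $U^- \times (L \ltimes \mathfrak{l}) \times U^+ \to G \ltimes \mathfrak{g}$ is a diffeomorphism from a neighborhood of the identity onto a neighborhood of $\Id$, because its differential at the identity is an isomorphism (using $\mathfrak{g} = \mathfrak{n}^- \oplus \mathfrak{l} \oplus \mathfrak{n}^+$ in both the linear and the translation directions). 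Consequently the restriction of $o$ to the submanifold $U^- U^+$ is a local diffeomorphism at $\Id$ onto a neighborhood $\mathcal{U}$ of $(\hat{\mathfrak{p}}^+, \hat{\mathfrak{p}}^-)$ in $\mathcal{P}$: the kernel of $T_{\Id}o$ is the Lie algebra $\mathfrak{l} \ltimes \mathfrak{l}$ of the stabilizer, which is complementary in $\mathfrak{g} \ltimes \mathfrak{g}$ to $T_{\Id}(U^- U^+)$, and the dimensions match. Its inverse is the desired section $\sigma\colon \mathcal{U} \to U^- U^+ \subset G \ltimes \mathfrak{g}$. (One could alternatively just invoke the general existence of continuous local sections for the locally trivial fibration $o$, at the cost of being less explicit.)

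To conclude, for $P = (\hat{\mathfrak{p}}_1, \hat{\mathfrak{p}}_2) \in \mathcal{U}$ set $\phi_P := \sigma(P)^{-1}$; this is a canonizing map for $P$, and $\max(\|\phi_P\|, \|\phi_P^{-1}\|) = \max(\|\sigma(P)\|, \|\sigma(P)^{-1}\|)$ tends to $1$ as $P \to (\hat{\mathfrak{p}}^+, \hat{\mathfrak{p}}^-)$, by continuity of $\sigma$ and of inversion in $\GL(\hat{\mathfrak{g}})$. Shrinking $\mathcal{U}$, I may assume this quantity is $\leq 2$ throughout $\mathcal{U}$; then the optimal canonizing map of each $P \in \mathcal{U}$ also has norm $\leq 2$, so every $P \in \mathcal{U}$ is $2$-non-degenerate. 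Finally, since transversality is open and the relevant topology is generated by $\alpha^{\mathrm{Haus}}$ on each coordinate, there is $\eps > 0$ such that every pair of affine m.p.a.'s $(\hat{\mathfrak{p}}_1, \hat{\mathfrak{p}}_2)$ with $\alpha^{\mathrm{Haus}}(\hat{\mathfrak{p}}_1, \hat{\mathfrak{p}}^+) \leq \eps$ and $\alpha^{\mathrm{Haus}}(\hat{\mathfrak{p}}_2, \hat{\mathfrak{p}}^-) \leq \eps$ is automatically transverse and lies in $\mathcal{U}$; this $\eps$ works.

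The step I expect to be the real obstacle is the construction of the section. One cannot straighten $\hat{\mathfrak{p}}_1$ to $\hat{\mathfrak{p}}^+$ by an \emph{arbitrary} element of $G \ltimes \mathfrak{g}$ and hope for a norm bound: the stabilizer $P^+ \ltimes \mathfrak{p}^+$ is non-compact, so ``$\hat{\mathfrak{p}}_1$ close to $\hat{\mathfrak{p}}^+$'' does not by itself constrain the norm of a straightening map. Passing through a local section of $o$ is precisely what forces the straightening map --- and its inverse --- to have operator norm close to $1$; everything else is a formal verification (openness of transversality, the Grassmannian topology being metrized by $\alpha^{\mathrm{Haus}}$, and the differential computation identifying $T_{\Id}(U^-U^+)$ as a complement of the stabilizer).
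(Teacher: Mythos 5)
Your proof is correct and rests on the same idea as the paper's: the set of transverse pairs of affine m.p.a.'s is a homogeneous space of $G \ltimes \mathfrak{g}$ with stabilizer $L \ltimes \mathfrak{l}$ at the base pair, so the orbital map is open near the identity and pairs close to $(\hat{\mathfrak{p}}^+, \hat{\mathfrak{p}}^-)$ admit canonizing maps of norm close to $1$. The only difference is that the paper simply invokes openness of the orbital map abstractly, whereas you make it explicit by building a local section from the big cell $U^-(L\ltimes\mathfrak{l})U^+$ --- a more constructive route to the same conclusion.
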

(Of course the constant~2 is arbitrary; we could replace it by any number larger than~1.)
\begin{proof}
Let $\mathscr{P}$ be the set of all pairs of affine m.p.a.'s, $\mathscr{P}' \subset \mathscr{P}$ the subset of transverse pairs. Since $\mathscr{P}'$ is an open subset of $\mathscr{P}$, for $\eps$ sufficiently small $\hat{\mathfrak{p}}_1$ and $\hat{\mathfrak{p}}_2$ will be transverse. Moreover, $\mathscr{P}'$ is a homogeneous space under the action of $G \ltimes \mathfrak{g}$ (by Claim \ref{pair_transitivity}), hence the orbital map that maps an element $\phi \in G \ltimes \mathfrak{g}$ to the pair $\phi(\hat{\mathfrak{p}}^+, \hat{\mathfrak{p}}^-)$ is open. It follows that for any~$C$, the set of "strictly $C$-non-degenerate" (meaning $C'$-non-degenerate for some $C' < C$) pairs is open.
\end{proof}

\begin{proof}[Proof of Proposition \ref{regular_product}]
Let $C \geq 1$, and let $(g, h)$ be a $C$-non-degenerate pair of $\mathbb{R}$-regular maps in $G \ltimes \mathfrak{g}$. Suppose that $g^{\pm 1}$ and $h^{\pm 1}$ are $s_1(C)$-contracting, for some constant $s_1(C)$ to be specified later.

Take $\gamma_1 = \ext^{p} g$ and $\gamma_2 = \ext^{p} h$. Let us check the conditions of Proposition \ref{proximal_product}. Indeed:
\begin{itemize}
\item By Lemma \ref{regular_to_proximal} (i), $\gamma_1$ and $\gamma_2$ are proximal.
\item By Lemma \ref{regular_to_proximal} (ii), the pair $(\gamma_1, \gamma_2)$ is $C^p$-non-degenerate.
\item Since we have supposed $s_1(C) \leq 1$, it follows by Lemma \ref{regular_to_proximal} (iii) that $\tilde{s}(\gamma_1) \lesssim_C s(g)$ and $\tilde{s}(\gamma_2) \lesssim_C s(h)$. If we choose $s_1(C)$ sufficiently small, then $\gamma_1$ and $\gamma_2$ are sufficiently contracting to apply Proposition \ref{proximal_product}, namely $\tilde{s}_1(C^p)$-contracting.
\end{itemize}
Now we apply Proposition \ref{proximal_product} to the map $\ext^{p} (gh) = \gamma_1 \gamma_2$.  It remains to deduce the conclusions of Proposition \ref{regular_product}.
\begin{itemize}
\item That $gh$ is $\mathbb{R}$-regular follows by Lemma \ref{regular_to_proximal} (i).
\item From Proposition \ref{proximal_product} (i), using Lemma \ref{regular_to_proximal} (i), (iii) and (iv), we get
\[\alpha^\mathrm{Haus} \left(A^{\subge}_{gh},\; A^{\subge}_{g} \right)
  \lesssim_C s(g),\]
which shows the first line of Proposition \ref{regular_product} (i).
\item By applying Proposition \ref{proximal_product} to $\gamma_2^{-1} \gamma_1^{-1}$ instead of $\gamma_1 \gamma_2$, we get in the same way the second line of Proposition \ref{regular_product} (i).
\item Let $\phi$ be an optimal canonizing map for the pair $(A^{\subge}_{g}, A^{\suble}_{h})$. By hypothesis, we have~$\left \|\phi^{\pm 1} \right\| \leq C$. But if we take $s_1(C)$ sufficiently small, the two inequalities that we have just shown, together with Lemma \ref{continuity_of_non_degeneracy}, allow us to find a map $\phi'$ with $\|\phi'\| \leq 2$, $\|{\phi'}^{-1}\| \leq 2$ and
\[\phi' \circ \phi (A^{\subge}_{gh}, A^{\suble}_{gh}) = (\hat{\mathfrak{p}}^+, \hat{\mathfrak{p}}^-).\]
It follows that the composition map $gh$ is $2C$-non-degenerate.
\item The last inequality, namely Proposition \ref{regular_product}~(ii), now follows from Pro\-po\-si\-tion~\ref{proximal_product}~(ii) by using Lemma \ref{regular_to_proximal}~(iii). \qedhere
\end{itemize}
\end{proof}

\section{Additivity of Margulis invariant}
\label{sec:additivity}

Proposition \ref{invariant_additivity} below is the key ingredient of the paper. It explains how the Margulis invariant behaves under group operations (inverse and composition). The first point is trivial to prove, but still important. The proof of the second point occupies the entirety of this section. We prove it by reducing it successively to Lemma \ref{gghg=}, then to Lemma~\ref{close_to_identity}.

We call $w_0$ any map in $G$ such that $w_0(\mathfrak{p}^+, \mathfrak{p}^-) = (\mathfrak{p}^-, \mathfrak{p}^+)$. (By Claim \ref{u_l_invariant_d_fixed}, the result stated below does not depend on the choice of $w_0$.)

\begin{proposition} \mbox{ }
\label{invariant_additivity}
\begin{enumerate}[(i)]
\item For every $\mathbb{R}$-regular map $g \in G \ltimes \mathfrak{g}$, we have
\[M(g^{-1}) = -w_0(M(g)).\]
\item For every $C \geq 1$, there are positive constants $s_2(C) \leq 1$ and $\mu(C)$ with the following property. Let $g, h \in G \ltimes \mathfrak{g}$ be a $C$-non-degenerate pair of $\mathbb{R}$-regular maps, with $g^{\pm 1}$ and $h^{\pm 1}$ all $s_2(C)$-contracting. Then $gh$ is $\mathbb{R}$-regular, and we have:
\[\|M(gh) - M(g) - M(h)\| \leq \mu(C).\]
\end{enumerate}
\end{proposition}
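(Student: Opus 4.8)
Part (i) is a short symmetry computation. Inversion swaps the two non-neutral sides: $A^{\subge}_{g^{-1}} = A^{\suble}_g$, $A^{\suble}_{g^{-1}} = A^{\subge}_g$, while $A^{\sube}_{g^{-1}} = A^{\sube}_g$; correspondingly $V^{\subge}_{g^{-1}} = V^{\suble}_g$ and $V^{\suble}_{g^{-1}} = V^{\subge}_g$. Hence, if $\phi \in G$ satisfies $\phi(V^{\subge}_g, V^{\suble}_g) = (\mathfrak{p}^+, \mathfrak{p}^-)$, then $w_0\phi$ satisfies $(w_0\phi)(V^{\subge}_{g^{-1}}, V^{\suble}_{g^{-1}}) = (\mathfrak{p}^+, \mathfrak{p}^-)$, so it is a legitimate choice in the definition of $M(g^{-1})$. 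Picking $x \in A^{\sube}_g \cap \mathfrak{g}_{\Aff}$ and setting $y := g^{-1}(x)$, which again lies in $A^{\sube}_g \cap \mathfrak{g}_{\Aff}$ (this space being $g$-stable), we get $g^{-1}(x) - x = y - g(y) = -(g(y)-y)$, so by Proposition \ref{margulis_invariant_prop},
\[M(g^{-1}) = \pi_{\mathfrak{z}}\big(w_0\phi\,(-(g(y)-y))\big) = -w_0\big(\pi_{\mathfrak{z}}(\phi(g(y)-y))\big) = -w_0(M(g)),\]
where the middle equality uses that $\phi(g(y)-y) \in \mathfrak{l}$ and that $w_0$ acts on $\mathfrak{l}$ by a Lie algebra automorphism, hence preserves $\mathfrak{z} = Z(\mathfrak{l})$ and commutes with $\pi_{\mathfrak{z}}$, and the last equality is Proposition \ref{margulis_invariant_prop} applied once more.

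For part (ii), first fix $s_2(C) \le s_1(C)$ (to be shrunk further) so that Proposition \ref{regular_product} applies: $gh$ is $\mathbb{R}$-regular and $2C$-non-degenerate, $A^{\subge}_{gh}$ is $\lesssim_C s(g)$-close and $A^{\suble}_{gh}$ is $\lesssim_C s(h^{-1})$-close (Hausdorff, on the relevant projective spaces) to $A^{\subge}_g$ and $A^{\suble}_h$, and $s(gh) \lesssim_C s(g)s(h)$. Since $M$ is invariant under conjugation by $G \ltimes \mathfrak{g}$ (a direct consequence of Proposition \ref{margulis_invariant_prop}), we may conjugate by an optimal canonizing map of $gh$ and assume $A^{\subge}_{gh} = \hat{\mathfrak{p}}^+$, $A^{\suble}_{gh} = \hat{\mathfrak{p}}^-$, $A^{\sube}_{gh} = \hat{\mathfrak{l}}$, replacing $C$ by a function of $C$ throughout. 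Then, by Corollary \ref{V=_translation} and the affine reformulation in the proof of Proposition \ref{margulis_invariant_prop}, $gh$ restricted to $\hat{\mathfrak{l}}$ is a quasi-translation whose $\mathfrak{z}$-translation part equals $M(gh)$; moreover in this picture $A^{\subge}_g$ is $\lesssim_C s_2(C)$-close to $\hat{\mathfrak{p}}^+$ and $A^{\suble}_h$ is $\lesssim_C s_2(C)$-close to $\hat{\mathfrak{p}}^-$, so the "virtual neutral space" $A^{\subge}_g \cap A^{\suble}_h$ is $\lesssim_C s_2(C)$-close to $\hat{\mathfrak{l}}$, and likewise $A^{\sube}_g$ and $A^{\sube}_h$ are controlled relative to it.

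The heart of the argument (this is what I would isolate as Lemma \ref{gghg=}) is to evaluate the quasi-translation $gh|_{\hat{\mathfrak{l}}}$ at a conveniently chosen base point $x$ of bounded norm and decompose $gh(x) - x = \big(g(h(x)) - h(x)\big) + \big(h(x) - x\big)$. One argues that, because $x$ sits close to $A^{\suble}_h = V^{\subl}_h \oplus A^{\sube}_h$ and $h$ strongly contracts the $V^{\subl}_h$-direction, $h(x)$ lands $\lesssim_C s_2(C)$-close to $A^{\sube}_h$ with its $A^{\sube}_h$-component moved by the quasi-translation $h|_{A^{\sube}_h}$, whose $\mathfrak{z}$-part (under the canonical identification $A^{\sube}_h \cong \hat{\mathfrak{l}}$) is exactly $M(h)$; symmetrically, the subsequent application of $g$ moves the $\hat{\mathfrak{l}}$-component by $M(g)$ up to $\lesssim_C s_2(C)$. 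The bookkeeping passes through the canonical identifications of $A^{\sube}_g$, $A^{\sube}_h$ and $A^{\subge}_g \cap A^{\suble}_h$ with $\hat{\mathfrak{l}}$, which agree up to quasi-translations by Lemma \ref{projections_commute}; since quasi-translations preserve the $\mathfrak{z}$-component of differences exactly (Proposition \ref{quasi-translation}), transporting displacement vectors between these copies of $\hat{\mathfrak{l}}$ leaves the tracked quantity untouched. The remaining discrepancy — that $A^{\sube}_{gh}$ is only approximately the virtual neutral space — is absorbed by a change-of-frame map that is $\lesssim_C s_2(C)$-close to the identity (this is what I would isolate as Lemma \ref{close_to_identity}), contributing only to the constant $\mu(C)$, and shrinking $s_2(C)$ keeps the whole error below $\mu(C)$.

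The main obstacle is precisely this final error control, and it is here that the vector-valued, rotation-carrying setting genuinely goes beyond Margulis's scalar one. The displacement $gh(x) - x$ is \emph{not} small: it carries the possibly huge invariants $M(g), M(h)$ as well as the possibly huge rotation part living in $\mathfrak{d}$. One must therefore arrange the chain of identifications and the choice of $x$ so that every large ingredient — the invariants, the $\mathfrak{d}$-rotations, and the expanding/contracting eigenvalues of $g$ and $h$ — appears only inside maps (quasi-translations, orthogonal maps on $\mathfrak{d}$, projections along nilradicals) that preserve the $\mathfrak{z}$/$\mathfrak{d}$ splitting exactly, hence preserve the quantity $\pi_{\mathfrak{z}}$ we measure, while any map that is merely $\lesssim_C s_2(C)$-close to something harmless is applied only to bounded vectors; in other words, one must prevent the small perturbation $\lesssim_C s_2(C)$ from ever being multiplied by a large norm. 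Making this separation go through cleanly is the delicate step, and it is what forces the two-stage reduction through Lemmas \ref{gghg=} and \ref{close_to_identity}.
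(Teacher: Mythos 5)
Your part (i) is correct and is essentially the computation the paper leaves as ``obvious'': one only needs $V^{\subge}_{g^{-1}} = V^{\suble}_g$ and vice-versa, the substitution $y = g^{-1}(x)$, and the fact that $w_0$ normalizes $\mathfrak{l}$ and hence commutes with $\pi_{\mathfrak{z}}$.

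Part (ii), however, has a genuine gap, and it sits exactly at the point you flag as ``the delicate step''. Your guiding principle --- arrange matters so that any map that is merely $\lesssim_C s_2(C)$-close to something harmless ``is applied only to bounded vectors'' --- is not achievable, and it is not what the paper does. The displacement of $gh$ along its neutral space must be transported between $A^{\sube}_{gh}$ and the virtual neutral space $A^{\subge}_g \cap A^{\suble}_{gh}$ by two different projections (one parallel to $V^{\subl}_{gh}$, one parallel to $V^{\subg}_g \oplus V^{\subl}_g$); these differ by an operator of norm $\lesssim_C s(\ell(g))$ (the paper's Lemma \ref{close_to_identity}), and that operator unavoidably acts on the translation vector $v$ of the canonized $\restr{g}{A^{\sube}_g}$, whose norm is of order $\left\|\restr{g}{A^{\sube}_g}\right\|$ and is unbounded in terms of $C$ (it contains $M(g)$). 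No choice of base point or of identifications keeps this large vector away from the approximate map. The paper's resolution is not avoidance but compensation: by Lemma \ref{affine_to_vector} (iii), $s(\ell(g)) \left\|\restr{g}{A^{\sube}_g}\right\| \asymp_C s(g) \leq 1$; that is, the \emph{linear} contraction strength is smaller than the affine one by exactly the factor needed to absorb the size of the translation part. This identity is the decisive quantitative input of the whole proof of (ii), and your proposal never invokes it, nor the distinction between $s(g)$ and $s(\ell(g))$ on which it rests; without it the error term in your sketch is $\lesssim_C s_2(C)\,\|M(g)\|$ rather than $\lesssim_C 1$, which is useless. A secondary, related imprecision: ``$h(x)$ lands $\lesssim_C s_2(C)$-close to $A^{\sube}_h$'' is a projective (angular) statement, whereas what is needed is an affine estimate on a possibly huge displacement --- again the place where the compensation enters. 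The overall architecture you describe (decomposing $gh$ through its factors, routing through the canonical identifications of Lemma \ref{projections_commute}, isolating one change-of-projection lemma) does match the paper's, so the skeleton is right; the missing bone is Lemma \ref{affine_to_vector} (iii).
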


Let $C \geq 1$. We choose some constant $s_2(C) \leq 1$, small enough to satisfy all the constraints that will appear in the course of the proof. For the remainder of this section, we fix $g, h \in G \ltimes \mathfrak{g}$ a $C$-non-degenerate pair of $\mathbb{R}$-regular maps such that $g^{\pm 1}$ and $h^{\pm 1}$ are $s_2(C)$-contracting.

The following remark will be used throughout this section.
\begin{remark}
\label{ggh_gh_2C}
We may suppose that the pairs $(A^{\subge}_{gh}, A^{\suble}_{gh})$, $(A^{\subge}_{hg}, A^{\suble}_{hg})$, $(A^{\subge}_{g}, A^{\suble}_{gh})$ and $(A^{\subge}_{hg}, A^{\suble}_{g})$ are all $2C$-non-degenerate.
Indeed, recall that (by~Proposition \ref{regular_product}), we have
\[\begin{cases}
  \alpha^\mathrm{Haus} \left(A^{\subge}_{gh},\; A^{\subge}_{g} \right) \lesssim_C s(g) \vspace{1mm} \\
  \alpha^\mathrm{Haus} \left(A^{\suble}_{gh},\; A^{\suble}_{h} \right) \lesssim_C s(h^{-1})
\end{cases}\]
and similar inequalities with $g$ and $h$ interchanged. On the other hand, by hypothesis, $(A^{\subge}_{g}, A^{\suble}_{h})$ is $C$-non-degenerate. If we choose $s_2(C)$ sufficiently small, these four statements then follow from Lemma \ref{continuity_of_non_degeneracy}.
\end{remark}

\begin{proof}[Proof of Proposition \ref{invariant_additivity}] \mbox{ }
\begin{enumerate}[(i)]
\item Considering that $V^{\subge}_{g^{-1}} = V^{\suble}_{g}$ and vice-versa, and that $w_0$ commutes with $\pi_{\mathfrak{z}}$, this is obvious from the definition of the Margulis invariant.

\item If we take $s_2(C) \leq s_1(C)$, then Proposition \ref{regular_product} ensures that $gh$ is $\mathbb{R}$-regular.

\noindent To estimate $M(gh)$, we decompose $gh: A^{\sube}_{gh} \to A^{\sube}_{gh}$ into a product of several maps.

\begin{itemize}
\item We begin by decomposing the product $gh$ into its factors. We have the commutative diagram
\[
\begin{tikzcd}
  A^{\sube}_{gh}
& 
& 
& A^{\sube}_{hg}
    \arrow{lll}{g}
& 
& 
& A^{\sube}_{gh}
    \arrow{lll}{h}
    \arrow[bend right]{llllll}[swap]{gh}
\end{tikzcd}
\]
Indeed, since $hg$ is the conjugate of $gh$ by $h$ and vice-versa, we have $h(A^{\sube}_{gh}) = A^{\sube}_{hg}$ and $g(A^{\sube}_{hg}) = A^{\sube}_{gh}$.
\item Next we factor the map $g: A^{\sube}_{hg} \to A^{\sube}_{gh}$ through the map $g: A^{\sube}_{g} \to A^{\sube}_{g}$, which is better known to us. We have the commutative diagram
\[
\begin{tikzcd}
  A^{\sube}_{gh}
    \arrow{rdd}[swap]{\pi_g}
&
&
& A^{\sube}_{hg}
    \arrow{lll}{g}
    \arrow{ldd}{\pi_g}
\\
\\
& A^{\sube}_{g}
& A^{\sube}_{g}
    \arrow{l}{g}
&
\end{tikzcd}
\]
where $\pi_g$ is the projection onto $A^{\sube}_{g}$ parallel to $V^{\subg}_{g} \oplus V^{\subl}_{g}$. (It commutes with $g$ because $A^{\sube}_{g}$, $V^{\subg}_{g}$ and $V^{\subl}_{g}$ are all invariant by~$g$.)
\item Finally, we decompose again every diagonal arrow from the last diagram into two factors. For any two $\mathbb{R}$-regular maps $u$~and~$v$, we introduce the notation
\[A^{\sube}_{u, v} := A^{\subge}_{u} \cap A^{\suble}_{v}.\]
We call $P_1$ (resp. $P_2$) the projection onto $A^{\sube}_{g, gh}$ (resp. $A^{\sube}_{hg, g}$), still parallel to~$V^{\subg}_{g} \oplus V^{\subl}_{g}$. To justify this definition, we must check that $A^{\sube}_{g, gh}$ (and similarly~$A^{\sube}_{hg, g}$) is supplementary to $V^{\subg}_{g} \oplus V^{\subl}_{g}$. Indeed, by Remark \ref{ggh_gh_2C}, $A^{\suble}_{gh}$~is transverse to $A^{\subge}_{g}$, hence (by Claim \ref{Asubge_is_ampa} (iii)) supplementary to $V^{\subg}_{g}$;
thus $A^{\subge}_{g} = V^{\subg}_{g} \oplus A^{\sube}_{g, gh}$ and $\hat{\mathfrak{g}} = V^{\subl}_{g} \oplus A^{\subge}_{g} = V^{\subl}_{g} \oplus V^{\subg}_{g} \oplus A^{\sube}_{g, gh}$. Then we have the commutative diagrams
\[
\begin{tikzcd}
  A^{\sube}_{gh}
    \arrow[bend right]{rr}[swap]{\pi_g}
    \arrow{r}{P_1}
& A^{\sube}_{g, gh}
    \arrow{r}{\pi_g}
& A^{\sube}_{g}
\end{tikzcd}
\]
and
\[
\begin{tikzcd}
  A^{\sube}_{hg}
    \arrow[bend right]{rr}[swap]{\pi_g}
    \arrow{r}{P_2}
& A^{\sube}_{hg, g}
    \arrow{r}{\pi_g}
& A^{\sube}_{g}
\end{tikzcd}
\]
\end{itemize}

The second and third step can be repeated with $h$ instead of $g$. The way to adapt the second step is straightforward; for the third step, we factor $\pi_h: A^{\sube}_{hg} \to A^{\sube}_{h}$ through $A^{\sube}_{h, hg}$ and $\pi_h: A^{\sube}_{gh} \to A^{\sube}_{h}$ through $A^{\sube}_{gh, h}$.

Combining these three decompositions, we get the lower half of Diagram \ref{fig:largediagcomm1}. (We left out the expansion of $h$; we leave drawing the full diagram for especially brave readers.) Let us now interpret all these maps as endomorphisms of $\hat{\mathfrak{l}}$. To do this, we choose some optimal canonizing maps
\[\phi_{g},\; \phi_{gh},\; \phi_{hg},\; \phi_{g, gh},\; \phi_{hg, g}\]
respectively of $g$, of $gh$, of $hg$, of the pair $(A^{\subge}_{g}, A^{\suble}_{gh})$ and of the pair $(A^{\subge}_{hg}, A^{\suble}_{g})$. This allows us to define $\overline{g_{gh}}$, $\overline{h_{gh}}$, $\overline{g_{g, gh}}$, $\overline{g_{\sube}}$, $\overline{P_1}$, $\overline{P_2}$, $\overline{\psi_1}$,~$\overline{\psi_2}$ to be the maps that make the whole Diagram \ref{fig:largediagcomm1} commutative.

\begin{figure}
\[
\begin{tikzcd}
  \hat{\mathfrak{l}}
    \arrow{rdd}[swap]{\overline{P_1}}
&
&
&
&
&
& \hat{\mathfrak{l}}
    \arrow{ldd}{\overline{P_2}}
    \arrow{llllll}{\overline{g_{gh}}}
& \hat{\mathfrak{l}}
    \arrow{l}{\overline{h_{gh}}}
\\
\\
&
  \hat{\mathfrak{l}}
    \arrow{rdd}[swap]{\overline{\psi_1}}
&
&
&
& \hat{\mathfrak{l}}
    \arrow{ldd}{\overline{\psi_2}}
    \arrow{llll}{\overline{g_{g, gh}}}
&
&
\\
\\
&
& \hat{\mathfrak{l}}
&
& \hat{\mathfrak{l}}
	\arrow{ll}{\overline{g_{\sube}}}
&
&
&
\\
\\
  A^{\sube}_{gh}
    \arrow{rdd}{P_1}
    \arrow[dashed]{uuuuuu}[pos=0.13]{\phi_{gh}}
&
&
&
&
&
& A^{\sube}_{hg}
    \arrow{ldd}[swap]{P_2}
    \arrow{llllll}{g}
    \arrow[dashed]{uuuuuu}[pos=0.13]{\phi_{hg}}
& A^{\sube}_{gh}
    \arrow{l}{h}
    \arrow[dashed]{uuuuuu}[pos=0.13]{\phi_{gh}}
\\
\\
&
  A^{\sube}_{g, gh}
    \arrow{rdd}{\pi_{g}}
    \arrow[dashed]{uuuuuu}{\phi_{g, gh}}
&
&
&
& A^{\sube}_{hg, g}
    \arrow{ldd}[swap]{\pi_{g}}
    \arrow[dashed]{uuuuuu}{\phi_{hg, g}}
&
&
\\
\\
&
& A^{\sube}_{g}
    \arrow[dashed]{uuuuuu}[pos=0.86]{\phi_{g}}
&
& A^{\sube}_{g}
	\arrow{ll}{g}
    \arrow[dashed]{uuuuuu}[pos=0.86]{\phi_{g}}
&
&
&
\end{tikzcd}
\]
\caption{}
\label{fig:largediagcomm1}
\end{figure}

Now let us define
\[\begin{cases}
M_{gh}(g) := \pi_{\mathfrak{z}}(\overline{g_{gh}}(x) - x) \\
M_{gh}(h) := \pi_{\mathfrak{z}}(\overline{h_{gh}}(x) - x)
\end{cases}\]
for any $x \in \mathfrak{l}_{\Aff}$, where $\mathfrak{l}_{\Aff} := \hat{\mathfrak{l}} \cap \mathfrak{g}_{\Aff}$ is the affine space parallel to $\mathfrak{l}$ and passing through the origin. Since $gh$ is the conjugate of $hg$ by $g$ and vice-versa, the maps $\overline{g_{gh}}$ and $\overline{h_{gh}}$ stabilize the spaces $\hat{\mathfrak{p}}^+$ and $\hat{\mathfrak{p}}^-$; by Claim \ref{u_l_invariant_d_fixed}, they are thus quasi-translations. It follows that these values $M_{gh}(g)$ and $M_{gh}(h)$ do not depend on the choice of $x$. Compare this to the alternative formula \eqref{eq:margulis_invariant_alternative} for the Margulis invariant: we have $M(gh) = \pi_{\mathfrak{z}}(\overline{g_{gh}} \circ \overline{h_{gh}}(x) - x)$ for any $x \in \mathfrak{l}_{\Aff}$. It immediately follows that
\[M(gh) = M_{gh}(g) + M_{gh}(h).\]
Thus it is enough to show the estimates $\|M_{gh}(g) - M(g)\| \lesssim_C 1$ and $\|M_{gh}(h) - M(h)\| \lesssim_C 1$. This is an immediate consequence of Lemma \ref{gghg=} below. (Note that while the vectors $M_{gh}(g)$ and $M_{gh}(h)$ are elements of $\mathfrak{z}$, the maps $\overline{g_{gh}}$ and $\overline{h_{gh}}$ are extended affine isometries acting on the whole subspace $\hat{\mathfrak{l}}$.) \qedhere
\end{enumerate}
\end{proof}
\begin{remark}
In contrast to actual Margulis invariants, the values $M_{gh}(g)$ and $M_{gh}(h)$ \emph{do} depend on our choice of canonizing maps. Choosing other canonizing maps would force us to subtract some constant from the former and add it to the latter.
\end{remark}

\begin{definition}
We shall say that a linear bijection $f$ between two subspaces of $\hat{\mathfrak{g}}$
is \emph{$K(C)$-bounded} if it is bounded by a constant depending only on $C$, that is, $\|f\| \lesssim_C 1$ and $\|f^{-1}\| \lesssim_C 1$. We say that two automorphisms $f_1, f_2$ of $\hat{\mathfrak{l}}$ (depending somehow on $g$ and $h$) are \emph{$K(C)$-almost equivalent}, and we write $f_1 \approx_C f_2$, if they satisfy the condition
\[\|f_1 - \xi \circ f_2 \circ \xi'\| \lesssim_C 1\]
for some $K(C)$-bounded quasi-translations $\xi, \xi'$. This is indeed an equivalence relation.
\end{definition}

\begin{lemma}
\label{gghg=}
The maps $\overline{g_{gh}}$ and $\overline{h_{gh}}$ are $K(C)$-almost equivalent to $\overline{g_{\sube}}$ and $\overline{h_{\sube}}$, respectively.
\end{lemma}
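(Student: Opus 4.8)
The plan is to read off, from Diagram~\ref{fig:largediagcomm1}, the factorisation
\[\overline{g_{gh}} = \overline{P_1}^{-1} \circ \overline{\psi_1}^{-1} \circ \overline{g_\sube} \circ \overline{\psi_2} \circ \overline{P_2},\]
together with the analogous identity for $\overline{h_{gh}}$ obtained by running the same three decompositions through $A^\sube_h$ instead of $A^\sube_g$ (and interchanging the roles of $g$ and~$h$ throughout). Since $\approx_C$ is an equivalence relation and the composition of two $K(C)$-bounded quasi-translations is again one, it suffices to prove that each of $\overline{\psi_1},\overline{\psi_2},\overline{P_1},\overline{P_2}$ (and their counterparts for~$h$) is a $K(C)$-bounded quasi-translation: then $\overline{P_1}^{-1}\overline{\psi_1}^{-1}$ and $\overline{\psi_2}\,\overline{P_2}$ serve as the quasi-translations $\xi,\xi'$ witnessing $\overline{g_{gh}} \approx_C \overline{g_\sube}$, with zero error term.

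For the maps $\overline{\psi_i}$: the underlying map $\pi_g\colon A^\sube_{g,gh} \to A^\sube_g$ of $\overline{\psi_1}$ is the inverse of the projection~$\psi$ appearing in Lemma~\ref{projections_commute}, taken with $\hat{\mathfrak{p}}_1 = A^\subge_g$, $\hat{\mathfrak{p}}_2 = A^\suble_g$, $\hat{\mathfrak{p}}'_2 = A^\suble_{gh}$ and with canonizing maps $\phi_g$ and $\phi_{g,gh}$; hence $\overline{\psi_1}$ is a quasi-translation, and likewise $\overline{\psi_2}$ by the $w_0$-symmetric form of that lemma (with $A^\suble_g$ in the fixed role). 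For the norm bound, write $\overline{\psi_1} = \phi_g\circ\pi_g\circ\phi_{g,gh}^{-1}$: by Remark~\ref{ggh_gh_2C} the pair $(A^\subge_g,A^\suble_{gh})$ is $2C$-non-degenerate, so $\phi_{g,gh}$ may be taken with $\|\phi_{g,gh}^{\pm 1}\|\le 2C$, while $\|\phi_g^{\pm1}\|\le C$; and since $\phi_g$ carries the pairwise-transverse spaces $A^\sube_g, V^\subg_g, V^\subl_g$ to the pairwise-orthogonal $\hat{\mathfrak{l}},\mathfrak{n}^+,\mathfrak{n}^-$, the projection $\pi_g$ is conjugate by $\phi_g$ to an orthogonal projection, so $\|\pi_g\|\le\|\phi_g\|\,\|\phi_g^{-1}\|\le C^2$. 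Thus $\|\overline{\psi_1}\|\lesssim_C 1$, and, being a quasi-translation, i.e.\ an element of $(\Orth(\mathfrak{d})\ltimes\mathfrak{d})\times\mathfrak{z}$ whose norm and that of its inverse are both $\asymp 1+\|v\|$ with $v$ the translation part, we also get $\|\overline{\psi_1}^{-1}\|\lesssim_C 1$.

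The heart of the proof is the claim that $\overline{P_1}$ and $\overline{P_2}$ are $K(C)$-bounded quasi-translations. The norm bounds follow exactly as above from the $2C$-non-degeneracy of the pairs listed in Remark~\ref{ggh_gh_2C} together with $\|P_i\|\lesssim_C 1$. That each $\overline{P_i}$ is genuinely a quasi-translation is the content of Lemma~\ref{close_to_identity}: the affine m.p.a.'s $A^\subge_{gh}$ and $A^\subge_g$ satisfy $\alpha^{\mathrm{Haus}}(A^\subge_{gh},A^\subge_g)\lesssim_C s(g)$ by Proposition~\ref{regular_product}(i) and share the transverse complement $A^\suble_{gh}$, and a Langlands-decomposition argument mirroring the proof of Lemma~\ref{projections_commute} — splitting the relevant canonizing map as an element of $(L\ltimes\mathfrak{l})(N^-\ltimes\mathfrak{n}^-)$ and using that the $N^-\ltimes\mathfrak{n}^-$-part stabilizes $\mathfrak{n}^-$ and acts trivially on $\hat{\mathfrak{p}}^-/\mathfrak{n}^-$ — identifies the lifted projection $\overline{P_1}$ with the restriction to $\hat{\mathfrak{l}}$ of an element of $L\ltimes\mathfrak{l}$, in fact one lying within $\lesssim_C s(g)$ of the identity. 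The same applies to $\overline{P_2}$ and to the two projections arising in the decomposition of $\overline{h_{gh}}$ (using the $g\leftrightarrow h$ versions of Remark~\ref{ggh_gh_2C}).

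Granting this, $\overline{P_1}^{-1}\overline{\psi_1}^{-1}$ and $\overline{\psi_2}\,\overline{P_2}$ are $K(C)$-bounded quasi-translations, which yields $\overline{g_{gh}} \approx_C \overline{g_\sube}$; the symmetric argument gives $\overline{h_{gh}}\approx_C\overline{h_\sube}$. I expect the main obstacle to be precisely the quasi-translation assertion for the $\overline{P_i}$: it is \emph{not} enough to know that $\overline{P_i}$ is merely $K(C)$-close to a bounded quasi-translation, because an $O_C(s(g))$-sized discrepancy sitting to the left of $\overline{g_\sube}$ in the factorisation would be amplified by the (a priori unboundedly large) translation part of $\overline{g_\sube}$ and would destroy the $\approx_C$ bound; one really needs $\overline{P_i}$ to be an exact quasi-translation, which is why the careful Langlands-theoretic bookkeeping of Lemma~\ref{close_to_identity} is required.
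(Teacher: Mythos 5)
Your factorisation of $\overline{g_{gh}}$ from Diagram \ref{fig:largediagcomm1} is the right starting point, and you have correctly located the crux of the matter in the maps $\overline{P_1}, \overline{P_2}$ — including the reason why "close to a quasi-translation" is a priori insufficient (the error gets multiplied by the unbounded translation part of $\overline{g_\sube}$). But your resolution of that difficulty is wrong: $\overline{P_1}$ and $\overline{P_2}$ are \emph{not} quasi-translations in general, and Lemma \ref{close_to_identity} does not say they are. The paper states this explicitly ("the maps $\overline{P}_1$ and $\overline{P}_2$ need not be" quasi-translations), and the reason is structural: Lemma \ref{projections_commute} only covers projections between two intersections $\hat{\mathfrak{p}}_1 \cap \hat{\mathfrak{p}}_2$ and $\hat{\mathfrak{p}}_1 \cap \hat{\mathfrak{p}}'_2$ taken \emph{parallel to the nilradical $\mathfrak{n}_1$ of the common factor} $\hat{\mathfrak{p}}_1$. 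That is why $\overline{\psi_1}, \overline{\psi_2}$ qualify (there the common factor is $A^{\subge}_g$ and the projection is along $V^{\subg}_g$). For $P_1 : A^{\sube}_{gh} \to A^{\sube}_{g,gh}$ the common factor is $A^{\suble}_{gh}$, whose nilradical is $V^{\subl}_{gh}$, yet $P_1$ is taken parallel to $V^{\subg}_g \oplus V^{\subl}_g$; the Langlands argument you sketch would apply to the projection parallel to $V^{\subl}_{gh}$ (this is exactly the map $P''_1$ of Lemma \ref{close_to_identity}), not to $P_1$ itself. So your claim of a "zero error term" does not hold, and Lemma \ref{close_to_identity} only gives $\left\| \restr{(\overline{P_1}-\overline{P''_1})}{\mathfrak{l}} \right\| \lesssim_C s(\ell(g))$ — precisely the "merely close" situation you declared fatal.

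What actually closes the gap in the paper is a quantitative cancellation you are missing. After renormalizing the canonizing maps so that $\overline{g'_{gh}}$ and $\overline{g'_{g,gh}}$ become genuine translations, the paper writes $\overline{g'_{gh}} = (\overline{P_1}^{-1}\circ\overline{g'_{g,gh}}\circ\overline{P_1})\circ(\overline{P_1}^{-1}\circ\overline{P'_2})$; the second factor is then automatically a translation (hence a bounded quasi-translation), and the conjugate $\overline{P_1}^{-1}\circ\tau_v\circ\overline{P_1}$ is compared with $\overline{P''_1}^{-1}\circ\tau_v\circ\overline{P''_1}$. The resulting error is of order $s(\ell(g))\,\|v\|$ with $\|v\| \lesssim_C \left\|\restr{g}{A^{\sube}_g}\right\|$, and Lemma \ref{affine_to_vector}(iii) gives exactly $s(\ell(g))\left\|\restr{g}{A^{\sube}_g}\right\| \lesssim_C s(g) \leq 1$: the smallness of the \emph{linear} contraction strength $s(\ell(g))$ (as opposed to $s(g)$) beats the amplification by the translation part. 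This interplay between $s(\ell(g))$ and $s(g)$ is the key idea of the proof, and without it your argument does not go through.
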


To show this, we use the following property:
\begin{lemma}
\label{C-bounded}
All the non-horizontal arrows in Diagram \ref{fig:largediagcomm1} represent $K(C)$-bounded, bijective maps.
\end{lemma}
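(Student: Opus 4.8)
The plan is to sort the non-horizontal arrows into three groups: the dashed canonizing maps $\phi_g$, $\phi_h$, $\phi_{gh}$, $\phi_{hg}$, $\phi_{g,gh}$, $\phi_{hg,g}$ (and the two $h$-analogues); the projections $P_1$, $P_2$, $\pi_g$ (and their $h$-counterparts) in the lower half of Diagram \ref{fig:largediagcomm1}; and the slanted maps $\overline{P_1}$, $\overline{P_2}$, $\overline{\psi_1}$, $\overline{\psi_2}$ (and $h$-counterparts) in the upper half. By construction each map in the third group equals $\phi_{\mathrm{target}}\circ(\text{a map of the second group})\circ\phi_{\mathrm{source}}^{-1}$, so it is enough to deal with the first two groups.

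First I would check that every $\phi_\bullet$ is $K(C)$-bounded. Indeed $g$ and $h$ are $C$-non-degenerate by hypothesis; $gh$ and $hg$ are $2C$-non-degenerate by Proposition \ref{regular_product} (recall $s_2(C)\le s_1(C)$); and the pairs $(A^\subge_g,A^\suble_{gh})$, $(A^\subge_{hg},A^\suble_g)$ and their $h$-analogues are $2C$-non-degenerate by Remark \ref{ggh_gh_2C}. Hence Definition \ref{regular_definition} supplies optimal canonizing maps with $\|\phi_\bullet^{\pm1}\|\le 2C$, and the restrictions of these maps and their inverses to the subspaces appearing in the diagram are bounded by $2C$ as well.

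Second, and this is the technical core, I would show that each projection $P_i$, $\pi_g$ (and its $h$-version) is a $K(C)$-bounded bijection. Bijectivity follows from the direct-sum decomposition $\hat{\mathfrak g}=V^\subl_g\oplus V^\subg_g\oplus A^\sube_{g,gh}$ established inside the proof of Proposition \ref{invariant_additivity} (and its $h$-version): from it one reads $A^\subge_g=V^\subg_g\oplus A^\sube_{g,gh}$, so $\pi_g$ restricts to a bijection $A^\sube_{g,gh}\to A^\sube_g$ (both are complements of $V^\subg_g$ in $A^\subge_g$), and similarly for $P_2$ and the $h$-counterparts; for $P_1\colon A^\sube_{gh}\to A^\sube_{g,gh}$ one uses the factorisation $\pi_g|_{A^\sube_{g,gh}}\circ P_1=\pi_g|_{A^\sube_{gh}}$ with both outer maps bijections onto $A^\sube_g$. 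For the norm bounds I would conjugate by the optimal canonizing map $\phi_g$ of $g$: since $\phi_g$ sends the mutually $g$-invariant triple $(V^\subg_g,A^\sube_g,V^\subl_g)$ to the mutually orthogonal triple $(\mathfrak n^+,\hat{\mathfrak l},\mathfrak n^-)$ (Corollary \ref{dynamical_spaces_description}), each of these projections becomes the restriction of a projection of $\hat{\mathfrak g}$ with kernel $\mathfrak n^+\oplus\mathfrak n^-$ and image either $\hat{\mathfrak l}$ or $\mathfrak c:=\phi_g(A^\sube_{g,gh})=\hat{\mathfrak p}^+\cap\phi_g(A^\suble_{gh})$; both $\mathfrak c$ and $\hat{\mathfrak l}$ are complements of $\mathfrak n^+\oplus\mathfrak n^-$, so the projection and its inverse are restrictions of the two ``dual'' projections onto $\mathfrak c$ and onto $\hat{\mathfrak l}$ along $\mathfrak n^+\oplus\mathfrak n^-$, each of operator norm $1/\sin\theta$, where $\theta$ is the angle between $\mathfrak c$ and $\mathfrak n^+\oplus\mathfrak n^-$. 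This angle is bounded below in terms of $C$ because the pair $(\hat{\mathfrak p}^+,\phi_g(A^\suble_{gh}))$ is $2C^2$-non-degenerate: the map $\phi_{g,gh}\circ\phi_g^{-1}$, of operator norm and inverse norm at most $2C^2$, canonizes it. The only projections whose source ($A^\sube_{gh}$ or $A^\sube_{hg}$) is attached to the product rather than to $g$ or $h$ are $P_1$ and $P_2$; there I would additionally use Proposition \ref{regular_product} and a continuity argument in the spirit of Lemma \ref{continuity_of_non_degeneracy} — shrinking $s_2(C)$ if necessary — to ensure $A^\sube_{gh}$ lies within a small, $C$-controlled angle of $A^\sube_{g,gh}$, so that $P_1|_{A^\sube_{gh}}$ is a small perturbation of the identity of $A^\sube_{g,gh}$ after conjugation by $\phi_{g,gh}$, hence bounded both ways.

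Finally, each slanted map of the upper half is a composition of three $K(C)$-bounded bijections and is therefore itself a $K(C)$-bounded bijection, which completes the argument. I expect the main obstacle to be the bookkeeping in the second step: for each projection one must pick the canonizing map that turns it into a projection between two subspaces one of which is standard, and then extract the quantitative transversality bound from the correct $2C$- or $2C^2$-non-degeneracy statement — and, for $P_1$ and $P_2$, from the continuity of the dynamical spaces under products guaranteed by Proposition \ref{regular_product}.
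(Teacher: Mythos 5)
Your proposal is correct and follows essentially the same route as the paper's proof: the dashed canonizing maps are bounded via Remark \ref{ggh_gh_2C}, the lower-half projections (all parallel to $V^\subg_g \oplus V^\subl_g$) are handled by bounding from below, in terms of $C$, the angles between that common kernel and each of the source/target subspaces using conjugation by canonizing maps and the orthogonality of the standard spaces, and the upper-half maps are then compositions of $K(C)$-bounded bijections. The one step you leave informal --- that $A^\sube_{gh}$ stays within a $C$-controlled angle of $A^\sube_{g,gh}$ --- is exactly where the paper, instead of invoking Lemma \ref{continuity_of_non_degeneracy}, uses Proposition \ref{regular_product}(i) together with an explicit perpendicularity inequality bounding the Hausdorff distance of intersections with a fixed transverse subspace, and then shrinks $s_2(C)$; your sketch is compatible with this and contains no genuine gap.
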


Note that Lemma \ref{C-bounded} alone does not imply Lemma \ref{gghg=}: indeed, while the maps $\overline{\psi}_1$ and $\overline{\psi}_2$ are quasi-translations by Lemma \ref{projections_commute}, the maps $\overline{P}_1$~and~$\overline{P}_2$ need not be. This issue will be addressed in Lemma \ref{close_to_identity}. 

\begin{proof}[Proof of Lemma \ref{C-bounded}]\mbox{ }
\begin{itemize}
\item For the vertical arrows, this is an immediate consequence of Remark \ref{ggh_gh_2C}.
\item Let us take care of the maps $P_1: A^\sube_{gh} \to A^\sube_{g, gh}$, $P_2: A^\sube_{hg} \to A^\sube_{hg, g}$, $\pi_g: A^\sube_{g, gh} \to A^\sube_{g}$ and $\pi_g: A^\sube_{hg, g} \to A^\sube_{g}$. All of these maps are projections parallel to $V^\subg_g \oplus V^\subl_g$; thus to show that they are bijective and $K(C)$-bounded, it is enough to give a positive lower bound, depending only on $C$, on the five angles between $V^\subg_g \oplus V^\subl_g$ and each of the five subspaces
\[A^\sube_{gh},\; A^\sube_{g, gh},\; A^\sube_{g},\; A^\sube_{hg, g},\; A^\sube_{hg}.\]
(For bijectivity alone it would be enough to check that these angles are positive, \ie that each of these five subspaces is supplementary to $V^\subg_g \oplus V^\subl_g$. This is obvious for $A^\sube_{g}$, and has already been done for $A^\sube_{g, gh}$ and $A^\sube_{hg, g}$ to justify that $P_1$ and $P_2$ are well-defined.)

Let us estimate these angles:
\begin{itemize}
\item The fact that
\[\alpha(A^\sube_{g},\; V^\subg_g \oplus V^\subl_g) \gtrsim_C 1\]
is a direct consequence of the fact that $g$ is $C$-non-degenerate.
\item Let us estimate the position of $A^\sube_{g, gh}$. We know that $\phi_{g, gh}$ sends, respectively, $A^\suble_{gh}$ and $V^\subg_g$ to $\hat{\mathfrak{p}}^-$ and $\mathfrak{n}^+$ (using Claim~\ref{Asubge_is_ampa} (iii) about uniqueness of $V^\subg$). By convention, the latter two spaces are orthogonal; since the pair $(A^\subge_g, A^\suble_{gh})$ is $2C$-non-degenerate (Remark \ref{ggh_gh_2C}), it follows by Lemma \ref{bounded_norm_is_bilipschitz} about the Lipschitz constant of bounded maps that
\[\alpha(A^\suble_{gh}, V^\subg_g) \gtrsim_C 1\]
(in fact the left-hand side is precisely bounded below by $\frac{1}{4C^2}\frac{\pi}{2}$), and in particular
\[\alpha(A^\sube_{g, gh}, V^\subg_g) \gtrsim_C 1.\]
Next we apply the map $\phi_g$; since $\| \phi_g \| \leq C$ and $\| \phi_g^{-1} \| \leq C$, the distance is, once again, divided by at most $C^2$. But after applying this map, the space $\phi_g(A^\subge_g) = \hat{\mathfrak{p}}^+$, containing both $\phi_g(A^\sube_{g, gh})$ and $\phi_g(V^\subg_g)$, is orthogonal to $\phi_g(V^\subl_g) = \mathfrak{n}^+$; hence we have
\[\alpha \left( \phi_g(A^\sube_{g, gh}),\; \phi_g(V^\subg_g) \right)
 = \alpha \left( \phi_g(A^\sube_{g, gh}),\; \phi_g(V^\subg_g \oplus V^\subl_g) \right).\]
Applying $\phi_g^{-1}$ to get the original spaces, we introduce again a factor no smaller than $\frac{1}{C^2}$. We conclude that
\[\alpha(A^\sube_{g, gh}, V^\subg_g \oplus V^\subl_g) \gtrsim_C 1\]
as required.
\item For $A^\sube_{hg, g}$, by symmetry, the same calculation holds, \emph{mutatis mutandis}. By applying the map $\phi_{hg, g}$, we find similarly that
\[\alpha(A^\subge_{hg}, V^\subl_g) \gtrsim_C 1,\]
hence in particular
\[\alpha(A^\sube_{hg, g}, V^\subl_g) \gtrsim_C 1.\]
When we apply $\phi_g$, we get that
\[\alpha(\phi_g(A^\sube_{hg, g}),\; \phi_g(V^\subl_g))
= \alpha(\phi_g(A^\sube_{hg, g}),\; \phi_g(V^\subg_g \oplus V^\subl_g));\]
applying $\phi_g^{-1}$, we conclude that
\[\alpha(A^\sube_{hg, g}, V^\subg_g \oplus V^\subl_g) \gtrsim_C 1.\]
\item To show that the space $A^\sube_{gh}$ is "far" from $V^\subg_g \oplus V^\subl_g$, we will show that it is "close" to the space $A^\sube_{g, gh}$, that we have already shown to be "far" from $V^\subg_g \oplus V^\subl_g$.

We shall use the following property: if the linear subspaces $F$ and $G$ are perpendicular (meaning that the orthogonal supplements of $F \cap G$ respectively in $F$ and $G$ are orthogonal to each other), then for any subspace $F'$, we have
\[\alpha^\mathrm{Haus}(F \cap G, F' \cap G) \leq \alpha^\mathrm{Haus}(F, F'),\]
provided that $F' \cap G$ still has the same dimension as $F \cap G$.

Taking as $F$, $G$ and $F'$ the images by the $K(C)$-bounded map $\phi_{gh}$ of the spaces $A^{\subge}_{gh}$, $A^{\suble}_{gh}$ and $A^{\subge}_g$ respectively, we deduce that
\[\alpha^\mathrm{Haus} \left(A^{\sube}_{gh},\; A^{\sube}_{g, gh} \right)
\lesssim_C \alpha^\mathrm{Haus} \left(A^{\subge}_{gh},\; A^{\subge}_g \right).\]

On the other hand, Proposition \ref{regular_product} tells us that
\[\alpha^\mathrm{Haus} \left(A^{\subge}_{gh},\; A^{\subge}_g \right) \lesssim_C s(g).\]

Since $s(g) \leq s_2(C)$, taking $s_2(C)$ small enough, we may suppose that the Hausdorff distance between $A^{\sube}_{gh}$ and $A^{\sube}_{g, gh}$ is less than half our lower bound for the minimal distance between $A^{\sube}_{g, gh}$ and $V^\subg_g \oplus V^\subl_g$. We conclude that
\[\alpha(A^\sube_{gh}, V^\subg_g \oplus V^\subl_g) \gtrsim_C 1.\]
\item To estimate the position of $A^\sube_{hg}$, similarly, we show that it is close to $A^\sube_{hg, g}$. We apply the same property as above, taking now as $F$, $G$ and $F'$ the images by $\phi_{hg}$ of $A^{\suble}_{hg}$, $A^{\subge}_{hg}$ and $A^{\suble}_g$ respectively. We deduce that 
\[\alpha^\mathrm{Haus} \left(A^{\sube}_{hg},\; A^{\sube}_{hg, g} \right)
\lesssim_C \alpha^\mathrm{Haus} \left(A^{\suble}_{hg},\; A^{\suble}_g \right).\]
Using once again Proposition \ref{regular_product} and choosing $s_2(C)$ small enough, we conclude that
\[\alpha(A^\sube_{hg}, V^\subg_g \oplus V^\subl_g) \gtrsim_C 1.\]
\end{itemize}
This shows that all the diagonal arrows in the lower half of the diagram represent $K(C)$-bounded bijections.
\item The maps $\overline{P_1}$, $\overline{P_2}$, $\overline{\psi_1}$ and $\overline{\psi_2}$ from the upper half of the diagram are now compositions of $K(C)$-bounded bijections, hence they are themselves bijective and $K(C)$-bounded. This completes the proof of Lemma \ref{C-bounded}.
\qedhere
\end{itemize}
\end{proof}

\begin{proof}[Proof of Lemma \ref{gghg=}]
We shall concentrate on the estimate $\overline{g_{gh}} \approx_C \overline{g_{\sube}}$; the proof of the estimate $\overline{h_{gh}} \approx_C \overline{h_{\sube}}$ is analogous.

According to Lemma \ref{projections_commute}, the maps $\overline{\psi_1}$ and $\overline{\psi_2}$ are quasi-translations. Hence $\overline{g_{g, gh}}$ is also a quasi-translation.

We would like to pretend that $\overline{g_{gh}}$ and $\overline{g_{g, gh}}$ are actually translations. To do that, we modify slightly the upper right-hand corner of Diagram \ref{fig:largediagcomm1}. We set
\[\begin{cases}
\phi'_{hg} := \ell(\overline{g_{gh}}) \circ \phi_{hg} \\
\phi'_{hg, g} := \ell(\overline{g_{g, gh}}) \circ \phi_{hg, g},
\end{cases}\]
where $\ell$ stands for the linear part as defined in Section \ref{sec:affine_to_linear}, and we define $\overline{P'_2}$, $\overline{\psi'_2}$, $\overline{g'_{gh}}$, $\overline{g'_{g, gh}}$ so as to make the new diagram commutative (see Diagram~\ref{fig:largediagcomm2}). The factors $\ell(\overline{g_{gh}})$ and $\ell(\overline{g_{g, gh}})$ we introduced (the short horizontal arrows in Diagram \ref{fig:largediagcomm2}) have norm 1: indeed, being quasi-translations of $\hat{\mathfrak{l}}$ fixing~$\mathbb{R}_0$, they are orthogonal linear transformations (by Proposition~\ref{quasi-translation}). Thus Lemma \ref{C-bounded} still holds for Diagram \ref{fig:largediagcomm2}; but now, the modified maps $\overline{g'_{gh}}$ and $\overline{g'_{g, gh}}$ are translations by construction.

\begin{figure}[t]
\[
\definecolor{mygray}{gray}{0.7}
\begin{tikzcd}
  \hat{\mathfrak{l}}
    \arrow{rdd}[swap]{\overline{P_1}}
&
&
&
&
& \hat{\mathfrak{l}}
    \arrow{ldd}[swap]{\overline{P'_2}}
    \arrow{lllll}{\overline{g'_{gh}}}
& {\color{mygray}\hat{\mathfrak{l}}}
    \arrow[color=mygray]{ldd}[color=mygray]{\overline{P_2}}
    \arrow[dashed]{l}{\ell(\overline{g_{gh}})}
&
\\
\\
&
  \hat{\mathfrak{l}}
    \arrow{rdd}[swap]{\overline{\psi_1}}
&
&
& \hat{\mathfrak{l}}
    \arrow{dd}[swap]{\overline{\psi'_2}}
    \arrow{lll}{\overline{g'_{g, gh}}}
& {\color{mygray}\hat{\mathfrak{l}}}
    \arrow[color=mygray]{ldd}[color=mygray]{\overline{\psi_2}}
    \arrow[dashed]{l}{\ell(\overline{g_{g, gh}})}
&
&
\\
\\
&
& \hat{\mathfrak{l}}
&
& \hat{\mathfrak{l}}
	\arrow{ll}{\overline{g_{\sube}}}
&
&
&
\\
\\
  A^{\sube}_{gh}
    \arrow{rdd}{P_1}
    \arrow[dashed]{uuuuuu}[pos=0.13]{\phi_{gh}}
&
&
&
&
&
& A^{\sube}_{hg}
    \arrow{ldd}[swap]{P_2}
    \arrow{llllll}{g}
    \arrow[dashed]{luuuuuu}[pos=0.17]{\phi'_{hg}}
    \arrow[dashed, color=mygray]{uuuuuu}[swap, pos=0.13, color=mygray]{\phi_{hg}}
&
\\
\\
&
  A^{\sube}_{g, gh}
    \arrow{rdd}{\pi_{g}}
    \arrow[dashed]{uuuuuu}{\phi_{g, gh}}
&
&
&
& A^{\sube}_{hg, g}
    \arrow{ldd}[swap]{\pi_{g}}
    \arrow[dashed]{luuuuuu}[swap, pos=0.45]{\phi'_{hg, g}}
    \arrow[dashed, color=mygray]{uuuuuu}[swap, color=mygray]{\phi_{hg, g}}
&
&
\\
\\
&
& A^{\sube}_{g}
    \arrow[dashed]{uuuuuu}[pos=0.86]{\phi_{g}}
&
& A^{\sube}_{g}
	\arrow{ll}{g}
    \arrow[dashed]{uuuuuu}[pos=0.86]{\phi_{g}}
&
&
&
\end{tikzcd}
\]
\caption{}
\label{fig:largediagcomm2}
\end{figure}

We may write:
\[\overline{g'_{gh}} = (\overline{P_1}^{-1} \circ \overline{g'_{g, gh}} \circ \overline{P_1}) \circ (\overline{P_1}^{-1} \circ \overline{P'_2}).\]
Then, since $\overline{g'_{gh}}$ and $\overline{g'_{g, gh}}$ are translations, $\overline{P_1}^{-1} \circ \overline{P'_2}$ is also a translation. By Lemma~\ref{C-bounded} (applied to Diagram \ref{fig:largediagcomm2}), it is the composition of two $K(C)$-bounded maps, hence $K(C)$-bounded. Thus we have
\[\overline{g'_{gh}} \approx_C \overline{P_1}^{-1} \circ \overline{g'_{g, gh}} \circ \overline{P_1}.\]
Since $\ell(\overline{g_{gh}})$, $\ell(\overline{g_{g, gh}})$, $\overline{\psi_1}$ and $\overline{\psi_2}$ are $K(C)$-bounded quasi-translations, $\overline{g_{gh}}$~is $K(C)$-almost equivalent to $\overline{g'_{gh}}$ and $\overline{g_{\sube}}$ is $K(C)$-almost equivalent to $\overline{g'_{g, gh}}$. It remains to check that the map $\overline{g'_{g, gh}}$ is $K(C)$-almost equivalent to its conjugate $\overline{P_1}^{-1} \circ \overline{g'_{g, gh}} \circ \overline{P_1}$.

This follows from Lemma \ref{close_to_identity} below. Indeed, let us call $\overline{P''_1}$ the quasi-translation constructed in Lemma \ref{close_to_identity}. Let $v \in \mathfrak{l}$ be the translation vector of $\overline{g'_{g, gh}}$, so that
\[\overline{g'_{g, gh}} = \tau_v.\]
\begin{samepage}
Then we have
\[\left\| \overline{P_1}^{-1} \circ \overline{g'_{g, gh}} \circ \overline{P_1}
 - \overline{P''_1}^{-1} \circ \overline{g'_{g, gh}} \circ \overline{P''_1} \right\|
= \left\| \tau_{\overline{P_1}^{-1}(v)} - \tau_{\overline{P''_1}^{-1}(v)} \right\|.\]
Keep in mind that, for any vector $u$, while we call the map $\tau_u$ a "translation", it is actually a transvection; so its norm $\| \tau_u \|$ is equal to the norm of the matrix $\bigl( \begin{smallmatrix} \Id & \|u\| \\ 0 & 1 \end{smallmatrix} \bigr)$. In particular we have $\|u\| \leq \|\tau_u\| \leq \|u\| + 1$. It follows that
\end{samepage}
\begin{samepage}
\begin{align*}
\left\| \tau_{\overline{P_1}^{-1}(v)} - \tau_{\overline{P''_1}^{-1}(v)} \right\|
&\leq \left\| \overline{P_1}^{-1}(v) - \overline{P''_1}^{-1}(v) \right\| + 1 \\
&\leq \left\| \restr{(\overline{P_1}^{-1} - \overline{P''_1}^{-1})}{\mathfrak{l}} \right\| \| v \| + 1
\end{align*}
(as $v \in \mathfrak{l}$).
\end{samepage}

Now by Proposition \ref{quasi-translation}, we know that the quasi-translation $\overline{P''_1}$ restricted to $\mathfrak{l}$ is an element of the group $D$; since it is compact, the map $\rho \mapsto \rho^{-1}$ is Lipschitz-continuous on that group. Then we may deduce from Lemma \ref{close_to_identity} that
\[\left \| \restr{(\overline{P_1}^{-1} - \overline{P''_1}^{-1})}{\mathfrak{l}} \right \| \lesssim_C s(\ell(g)).\]
On the other hand, we have $\|v\| \leq \|\tau_v\| = \left\| \overline{g'_{g, gh}} \right\| \lesssim_C \left\| \restr{g}{A^\sube_g} \right\|$, since $\overline{g'_{g, gh}}$ is the composition of $\restr{g}{A^\sube_g}$ with several $K(C)$-bounded maps. It follows that
\[\left\| \overline{P_1}^{-1} \circ \overline{g'_{g, gh}} \circ \overline{P_1}
 - \overline{P''_1}^{-1} \circ \overline{g'_{g, gh}} \circ \overline{P''_1} \right\|
\lesssim_C s(\ell(g)) \left\|\restr{g}{A^\sube_g}\right\| + 1.\]
By Lemma \ref{affine_to_vector} (iii), we have $s(\ell(g)) \left\|\restr{g}{A^\sube_g}\right\| \lesssim_C s(g)$; and we know that $s(g) \leq 1$. Finally we get
\[\left\| \overline{P_1}^{-1} \circ \overline{g'_{g, gh}} \circ \overline{P_1}
 - \overline{P''_1}^{-1} \circ \overline{g'_{g, gh}} \circ \overline{P''_1} \right\|
\lesssim_C 1.\]
To complete the proof of Lemma \ref{gghg=}, and hence also the proof of Proposition \ref{invariant_additivity}, it remains only to prove Lemma \ref{close_to_identity}.
\end{proof}

\begin{lemma}
\label{close_to_identity}
The linear part of the map $\overline{P_1}$ is "almost" a quasi-translation. More precisely, there is a quasi-translation $\overline{P''_1}$ such that
\[\left \| \restr{(\overline{P_1} - \overline{P''_1})}{\mathfrak{l}} \right \| \lesssim_C s(\ell(g)).\]
\end{lemma}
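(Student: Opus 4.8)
The plan is to exhibit as $\overline{P''_1}$ the conjugate, via the chosen canonizing maps, of the projection $\psi\colon A^\sube_{gh}\to A^\sube_{g,gh}$ taken parallel to the ``right'' direction --- namely parallel to the nilradical $V^\subl_{gh}$ of the linear part of $A^\suble_{gh}$, rather than parallel to $W:=V^\subg_g\oplus V^\subl_g$ as $P_1$ is. First I would check that this $\psi$ is well defined: $A^\sube_{gh}$ and $A^\sube_{g,gh}$ are both complements of $V^\subl_{gh}$ inside $A^\suble_{gh}$, exactly as in the discussion following Lemma \ref{projections_commute} (for $A^\sube_{g,gh}$ one uses that $(A^\subge_g,A^\suble_{gh})$ is transverse by Remark \ref{ggh_gh_2C}). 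Then I would set $\overline{P''_1}:=\phi_{g,gh}\circ\psi\circ\phi_{gh}^{-1}$ (restricted to $\hat{\mathfrak l}$) and argue it is a quasi-translation. This is Lemma \ref{projections_commute} applied with $(\hat{\mathfrak p}_1,\hat{\mathfrak p}_2,\hat{\mathfrak p}'_2)=(A^\suble_{gh},A^\subge_{gh},A^\subge_g)$, the only point being that the canonizing maps there must send these pairs to $(\hat{\mathfrak p}^+,\hat{\mathfrak p}^-)$ in the correct order, so one uses $w_0\phi_{gh}$ and $w_0\phi_{g,gh}$; this produces $w_0\,\overline{P''_1}\,w_0^{-1}$ as a quasi-translation, and since $w_0$ stabilizes $\hat{\mathfrak l}$ and normalizes $L\ltimes\mathfrak l$ (which by Claim \ref{u_l_invariant_d_fixed} is the stabilizer of $(\hat{\mathfrak p}^+,\hat{\mathfrak p}^-)=(w_0\hat{\mathfrak p}^-,w_0\hat{\mathfrak p}^+)$), conjugating back keeps it a quasi-translation.

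Second, I would reduce the estimate to a purely linear one. As $\phi_{gh},\phi_{g,gh}$ preserve $\mathfrak g$ and carry $A^\sube_{gh},A^\sube_{g,gh}$ to $\hat{\mathfrak l}$, they carry $V^\sube_{gh},V^\sube_{g,gh}$ to $\mathfrak l$; and $P_1,\psi$ send $V^\sube_{gh}$ into $V^\sube_{g,gh}$ (being projections parallel to subspaces of $\mathfrak g$). Hence both $\overline{P_1}$ and $\overline{P''_1}$ map $\mathfrak l$ into $\mathfrak l$, with
\[\restr{\overline{P_1}}{\mathfrak l}=\restr{\phi_{g,gh}}{V^\sube_{g,gh}}\circ\restr{P_1}{V^\sube_{gh}}\circ\restr{\phi_{gh}^{-1}}{\mathfrak l},\]
and likewise for $\overline{P''_1}$ with $\psi$ in place of $P_1$. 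Since $\phi_{gh}$ and $\phi_{g,gh}$ are $K(C)$-bounded (Remark \ref{ggh_gh_2C}), it is enough to prove $\bigl\|\restr{(P_1-\psi)}{V^\sube_{gh}}\bigr\|\lesssim_C s(\ell(g))$.

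Finally, for that estimate I would argue as follows. On the one hand $\psi$ is $O_C(s(\ell(g)))$-close to the inclusion $V^\sube_{gh}\hookrightarrow\mathfrak g$: indeed $V^\sube_{g,gh}=V^\subge_g\cap V^\suble_{gh}$ and $V^\sube_{gh}=V^\subge_{gh}\cap V^\suble_{gh}$, while $\alpha^\mathrm{Haus}(V^\subge_g,V^\subge_{gh})\lesssim_C s(\ell(g))$ by Corollary \ref{vector_spaces_estimate}, so the ``perpendicular intersection'' inequality from the proof of Lemma \ref{C-bounded} (applied after the bounded map $\ell(\phi_{gh})$, which carries $V^\subge_{gh},V^\suble_{gh}$ to the perpendicular spaces $\mathfrak p^+,\mathfrak p^-$) gives $\alpha^\mathrm{Haus}(V^\sube_{g,gh},V^\sube_{gh})\lesssim_C s(\ell(g))$; together with the fact that $V^\subl_{gh}$ makes an angle $\gtrsim_C 1$ with $V^\sube_{gh}$ (they go to $\mathfrak n^-,\mathfrak l$ under $\ell(\phi_{gh})$) this yields $\|\psi(y)-y\|\lesssim_C s(\ell(g))\|y\|$. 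The map $P_1$ itself is \emph{not} close to the inclusion --- its parallel direction $W$ is large, and $V^\subl_{gh}$ need not be close to $V^\subl_g$ --- but the difference $P_1(y)-\psi(y)$ lies in the \emph{small} space $V^\sube_{g,gh}$, and that is the key. Letting $\pi_g$ also denote the linear projection $\mathfrak g\to V^\sube_g$ parallel to $W$ (which is $K(C)$-bounded since $g$ is $C$-non-degenerate, and kills $W$), the relation $P_1(y)-y\in W$ forces $\pi_g(P_1(y))=\pi_g(y)$, hence $\pi_g\bigl(P_1(y)-\psi(y)\bigr)=\pi_g\bigl(y-\psi(y)\bigr)$ has norm $\lesssim_C s(\ell(g))\|y\|$; and since $\restr{\pi_g}{V^\sube_{g,gh}}\colon V^\sube_{g,gh}\to V^\sube_g$ is a $K(C)$-bounded bijection (Lemma \ref{C-bounded}), we get $\|P_1(y)-\psi(y)\|\lesssim_C s(\ell(g))\|y\|$, as required. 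The main obstacle is precisely this last point: one cannot compare $P_1$ and $\psi$ directly because they have genuinely different and not mutually close parallel directions, so one must exploit that their difference is confined to $V^\sube_{g,gh}$, where the good projection $\pi_g$ is invertible with $C$-controlled constants. (Throughout, $s_2(C)$ is shrunk as needed so that $s(\ell(g))\le s(g)\le s_2(C)$ makes all the ``closeness'' hypotheses apply, in particular those of Remark \ref{ggh_gh_2C} and Corollary \ref{vector_spaces_estimate}.)
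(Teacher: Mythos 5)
Your construction of $\overline{P''_1}$ is exactly the paper's: the projection $A^{\sube}_{gh} \to A^{\sube}_{g,gh}$ parallel to $V^{\subl}_{gh}$, conjugated by $\phi_{gh}$ and $\phi_{g,gh}$, with Lemma \ref{projections_commute} supplying the quasi-translation property (your $w_0$-conjugation cleanly handles the orientation mismatch in applying that lemma, which the paper passes over in silence), and the reduction to the linear estimate $\bigl\|\restr{(P_1-P''_1)}{V^{\sube}_{gh}}\bigr\| \lesssim_C s(\ell(g))$ is also the same. Only the concluding computation is organized differently: the paper decomposes $x \in V^{\sube}_{gh}$ both along $V^{\sube}_{g,gh} \oplus V^{\subl}_{g} \oplus V^{\subg}_{g}$ and along $V^{\sube}_{g,gh} \oplus V^{\subl}_{gh}$, bounds the $V^{\subl}_{g}$-component by $s(\ell(g))\|x\|$ via Corollary \ref{vector_spaces_estimate}, and then extracts $\|x'_1 - x_1\| \lesssim_C \|x_2\|$ from the orthogonality (after $\phi_{g,gh}$) of $\mathfrak{l}$, $\mathfrak{n}^+$ and $\mathfrak{n}^-$; you instead prove $\|P''_1(y) - y\| \lesssim_C s(\ell(g))\|y\|$ directly from the Hausdorff closeness of $V^{\sube}_{g,gh}$ to $V^{\sube}_{gh}$, and then transfer the difference $P_1(y) - P''_1(y) \in V^{\sube}_{g,gh}$ through the $K(C)$-bounded bijection $\restr{\pi_g}{V^{\sube}_{g,gh}}$, using that $\pi_g$ kills the parallel direction of $P_1$. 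Both routes rest on the same inputs (Corollary \ref{vector_spaces_estimate} plus $C$-controlled transversality), and your version is correct; the $\pi_g$-transfer is a nice way to sidestep the explicit three-term decomposition.
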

Recall that $\ell(g)$ is the map with the same linear part as $g$, but with no translation part: see subsection \ref{sec:affine_to_linear}. We use the double prime because the relationship between $\overline{P''_1}$ and $\overline{P_1}$ is not the same as the relationship between $\overline{P'_2}$ and $\overline{P_2}$.

\begin{proof}
Let $P''_1: A^{\sube}_{gh} \to A^{\sube}_{g, gh}$ be the projection parallel to $V^{\subl}_{gh}$ (recall that $P_1$, by contrast, was along $V^{\subg}_g \oplus V^{\subl}_g$); we set $\overline{P''_1} := \phi_{g, gh} \circ P''_1 \circ \phi_{gh}^{-1}$ the corresponding endomorphism of~$\hat{\mathfrak{l}}$. Then by Lemma \ref{projections_commute}, $\overline{P''_1}$ is a quasi-translation.

We need to show that for any $x \in \mathfrak{l}$, we have
\[\|\overline{P_1}(x) - \overline{P''_1}(x)\| \lesssim_C s(\ell(g))\|x\|.\]
By Remark \ref{ggh_gh_2C}, this is true iff for any $x \in V^{\sube}_{gh}$, we have
\[\|P_1(x) - P''_1(x)\| \lesssim_C s(\ell(g))\|x\|.\]
Take any $x \in V^{\sube}_{gh}$. Let us decompose it in two ways:
\begin{align*}
x &=: \underbrace{x_1}_{\in V^{\sube}_{g, gh}}
   +  \underbrace{x_2}_{\in V^{\subl}_{g}}
   +  \underbrace{x_3}_{\in V^{\subg}_{g}} \\
  &=: \underbrace{x'_1}_{\in V^{\sube}_{g, gh}}
   +  \underbrace{x'_2}_{\in V^{\subl}_{gh}},
\end{align*}
so that $x_1 = P_1(x)$ and $x'_1 = P''_1(x)$. Our first goal is to establish the estimate \eqref{eq:x_2_estimate} below. Roughly, the idea is that since $x \in V^{\sube}_{gh} \subset V^{\subge}_{gh}$, and since the latter subspace is "close" to $V^{\subge}_{g} = V^{\sube}_{g, gh} \oplus V^{\subg}_{g}$, the component $x_2$ is "small".

More precisely, $x_2$ is the image of $x$ by the projection onto $V^{\subl}_{g}$ parallel to~$V^{\subge}_{g}$; hence $\phi_{g}(x_2)$ is the image of $\phi_{g}(x)$ by the projection onto $\mathfrak{n}^-$ parallel to $\mathfrak{p}^+$, which is an orthogonal projection. It follows that
\begin{align*}
\frac{\|\phi_{g}(x_2)\|}{\|\phi_{g}(x)\|}
   &= \sin \alpha \left( \phi_{g}(x), \mathfrak{p}^+ \right) \\
   &\leq \alpha \left( \phi_{g}(x), \mathfrak{p}^+ \right) \\
   &\leq \alpha^\mathrm{Haus} \left(
           \phi_{g}(V^{\subge}_{gh}),
           \phi_{g}(V^{\subge}_{g})
         \right).
\end{align*}
Since $g$ is $C$-non-degenerate, using Lemma \ref{bounded_norm_is_bilipschitz} we get
\[\|x_2\| \lesssim_C \alpha^\mathrm{Haus} (V^{\subge}_{gh}, V^{\subge}_{g}) \|x\|.\]
From Corollary \ref{vector_spaces_estimate}, it follows that
\begin{equation}
\label{eq:x_2_estimate}
\|x_2\| \lesssim_C s(\ell(g)) \|x\|.
\end{equation}

On the other hand, we have:
\[x_2 = \underbrace{(x'_1-x_1)}_{\in V^{\sube}_{g, gh}}
      - \underbrace{x_3}_{\in V^{\subg}_{g}}
      + \underbrace{x'_2}_{\in V^{\subl}_{gh}},\]
hence
\[\|\phi_{g, gh}(x_2)\|^2 =
    \|\phi_{g, gh}(x'_1-x_1)\|^2
  + \|\phi_{g, gh}(x_3)\|^2
  + \|\phi_{g, gh}(x'_2)\|^2\]
and in particular
\[\|\phi_{g, gh}(x'_1-x_1)\| \leq \|\phi_{g, gh}(x_2)\|.\]
Since $(A^{\subge}_{g}, A^{\suble}_{gh})$ is $2C$-non-degenerate (Remark \ref{ggh_gh_2C}), it follows that
\begin{equation}
\label{eq:x'_1-x_1_estimate}
\|x'_1-x_1\| \lesssim_C \|x_2\|.
\end{equation}
\begin{samepage}
Combining this with \eqref{eq:x_2_estimate}, we get
\[\|x'_1-x_1\| \lesssim_C s(\ell(g)) \|x\|;\]
Lemma \ref{close_to_identity}, and therefore Lemma \ref{gghg=} and Proposition \ref{invariant_additivity}, are now proved.
\end{samepage}
\end{proof}

\section{Margulis invariants of words}
\label{sec:induction}

We have already studied how contraction strengths (Proposition \ref{regular_product}) and Margulis invariants (Proposition \ref{invariant_additivity}) behave when we take the product of two $\mathbb{R}$-regular, $C$-non-degenerate, sufficiently contracting maps. The goal of this section is to generalize these results to words of arbitrary length on a given set of generators.

\begin{definition}
\label{cyclically_reduced_definition}
Take $k$ generators $g_1, \ldots, g_k$. Consider a word $g = g_{i_1}^{\sigma_1} \cdots g_{i_l}^{\sigma_l}$ of length $l \geq 1$ on these generators and their inverses (for every~$m$ we have $1 \leq i_m \leq k$ and $\sigma_m = \pm 1$). We say that $g$ is \emph{reduced} if for every~$m$ such that $1 \leq m \leq l-1$, we have $(i_{m+1}, \sigma_{m+1}) \neq (i_m, -\sigma_m)$. We say that $g$ is \emph{cyclically reduced} if it is reduced and also satisfies $(i_1, \sigma_1) \neq (i_l, -\sigma_l)$.
\end{definition}

\begin{proposition}
\label{Schottky_group}
For every $C \geq 1$, there is a positive constant $s_3(C) \leq 1$ with the following property. Take any family of maps $g_1, \ldots, g_k \in G \ltimes \mathfrak{g}$ satisfying the following hypotheses:
\begin{enumerate}[(H1)]
\item Every $g_i$ is $\mathbb{R}$-regular.
\item Any pair taken among the maps $\{g_1, \ldots, g_k, g_1^{-1}, \ldots, g_k^{-1}\}$ is $C$\nobreakdash-\hspace{0pt}non-degenerate, except of course if it has the form $(g_i, g_i^{-1})$ for some~$i$.
\item For every $i$, we have $s(g_i) \leq s_3(C)$ and $s(g_i^{-1}) \leq s_3(C)$.
\end{enumerate}
Take any nonempty cyclically reduced word $g = g_{i_1}^{\sigma_1} \cdots g_{i_l}^{\sigma_l}$ (where we have $1 \leq i_m \leq k$ and $\sigma_m = \pm 1$ for every $m$). Then $g$ is $\mathbb{R}$-regular, $2C$-non-degenerate, and we have
\[\left\| M(g) - \sum_{m=1}^{l} M(g_{i_m}^{\sigma_m}) \right\| \leq l\mu(2C)\]
(where $\mu(2C)$ is the constant introduced in Proposition \ref{invariant_additivity}).
\end{proposition}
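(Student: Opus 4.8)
The plan is to prove, by strong induction on the length $l$, a statement a little stronger than the one quoted. Namely: under (H1)--(H3) (with $s_3(C)$ shrunk as needed along the way), every nonempty cyclically reduced word $g = g_{i_1}^{\sigma_1}\cdots g_{i_l}^{\sigma_l}$ is $\mathbb{R}$-regular, is $2C$-non-degenerate, satisfies the Margulis estimate (in fact with the sharper bound $(l-1)\mu(2C)$), and moreover
\[
A^{\subge}_g \in B_{\mathbb{P}}\!\left(A^{\subge}_{g_{i_1}^{\sigma_1}},\eps_0\right),\qquad
A^{\suble}_g \in B_{\mathbb{P}}\!\left(A^{\suble}_{g_{i_l}^{\sigma_l}},\eps_0\right),\qquad
s(g),\,s(g^{-1})\ \le\ s_3(C)\,2^{-(l-1)},
\]
where $\eps_0 = \eps_0(C)$ is a fixed constant below the threshold $\eps$ of Lemma~\ref{continuity_of_non_degeneracy}. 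These extra clauses are what let the induction feed back into itself, and the decay factor $2^{-(l-1)}$ — rather than the crude $s(g)\le s_3(C)$ — is essential. The base case $l=1$ is immediate: $\mathbb{R}$-regularity and the bound on $s$ are (H1) and (H3); a single generator or its inverse is $C$-non-degenerate by (H2); the two containments are equalities; the Margulis estimate reads $0\le 0$.

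For the inductive step I would first record an elementary combinatorial fact: a cyclically reduced word of length $l\ge 2$ factors as $g = w_1w_2$ with $w_1,w_2$ nonempty and cyclically reduced. (If no cut point worked, one is quickly forced to have $x_j = x_l^{-1}$ for every $2\le j\le l-1$, with $x_1,\dots,x_l$ the letters; this contradicts reducedness of $x_1x_2$ together with $x_l\neq x_1^{-1}$, which holds by cyclic reducedness.) Both $w_1$ and $w_2$ are shorter than $g$, so the induction hypothesis applies to each. The crux is then to check that $(w_1,w_2)$ is a $2C$-non-degenerate \emph{pair} of $\mathbb{R}$-regular maps. Of the four pairs $(A^{\subge}_{w_i},A^{\suble}_{w_j})$, the two diagonal ones are the $2C$-non-degeneracy of $w_1$ and of $w_2$. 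For $(A^{\subge}_{w_1},A^{\suble}_{w_2})$: by the geometric control these spaces lie within $\eps_0$ of $A^{\subge}_{g_{i_1}^{\sigma_1}}$ and $A^{\suble}_{g_{i_l}^{\sigma_l}}$, and \emph{because $g$ is cyclically reduced} the pair of generators $(g_{i_1}^{\sigma_1},g_{i_l}^{\sigma_l})$ is not of the excluded form $(g_i,g_i^{-1})$, hence $C$-non-degenerate by (H2); conjugating by a $C$-bounded canonizing map for that pair and invoking Lemmas~\ref{bounded_norm_is_bilipschitz} and~\ref{continuity_of_non_degeneracy} yields $2C$-non-degeneracy of $(A^{\subge}_{w_1},A^{\suble}_{w_2})$ once $\eps_0$ is small enough. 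For $(A^{\subge}_{w_2},A^{\suble}_{w_1})$ the same works, now using that the last letter of $w_1$ and the first letter of $w_2$ are adjacent in $g$, so this pair of generators is non-excluded already by reducedness.

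Having $(w_1,w_2)$ a $2C$-non-degenerate pair, and after choosing $s_3(C)\le\min\big(s_1(2C),s_2(2C)\big)$ small enough, I apply Proposition~\ref{regular_product} to $(w_1,w_2)$ — obtaining that $g$ is $\mathbb{R}$-regular, that $\alpha^\mathrm{Haus}(A^{\subge}_g,A^{\subge}_{w_1})\lesssim_C s(w_1)$ and $\alpha^\mathrm{Haus}(A^{\suble}_g,A^{\suble}_{w_2})\lesssim_C s(w_2^{-1})$, and that $s(g),s(g^{-1})\lesssim_C s(w_1)s(w_2)$ — and Proposition~\ref{invariant_additivity}(ii) to $(w_1,w_2)$, obtaining $\|M(g)-M(w_1)-M(w_2)\|\le\mu(2C)$. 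It then remains to re-establish the strengthened clauses for $g$. The bound $s(g),s(g^{-1})\le s_3(C)2^{-(l-1)}$ follows from $s(w_1)s(w_2)\le s_3(C)^2\,2^{-(l-2)}$ once the $\lesssim_C$ constant is absorbed by shrinking $s_3(C)$. For the geometric control, combine the Hausdorff estimates above with the inductive control of $w_1,w_2$: unrolling the recursion, the proximity of $A^{\subge}_g$ to $A^{\subge}_{g_{i_1}^{\sigma_1}}$ is a sum, over the levels of the recursion, of one term $\lesssim_C s(\cdot)$ governed by the current first factor; since the lengths of these successive first factors form a strictly decreasing sequence of positive integers, the geometric decay in the $s$-bound makes the total $\lesssim_C s_3(C)\le\eps_0$, and symmetrically for $A^{\suble}_g$. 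Once this holds, the $2C$-non-degeneracy of $g$ is re-derived exactly as in the previous paragraph (again using cyclic reducedness of $g$), and the Margulis estimate follows by the triangle inequality from Proposition~\ref{invariant_additivity}(ii) and induction: $\|M(g)-\sum_m M(g_{i_m}^{\sigma_m})\|\le\mu(2C)+(|w_1|-1)\mu(2C)+(|w_2|-1)\mu(2C)=(l-1)\mu(2C)$.

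I expect the main obstacle to be precisely this uniformity: ensuring that the dynamical subspaces of arbitrarily long words do not drift away from those of their boundary letters, and that all intermediate pairs stay non-degenerate with a single constant depending only on $C$ — for otherwise the contraction thresholds $s_1(\cdot),s_2(\cdot)$ needed to invoke Propositions~\ref{regular_product} and~\ref{invariant_additivity} would degrade without bound as $l$ grows. This is exactly what forces us to carry the containments $A^{\subge}_g\in B_{\mathbb P}(\cdot,\eps_0)$, $A^{\suble}_g\in B_{\mathbb P}(\cdot,\eps_0)$ and the \emph{decaying} bound on $s(g)$ through the induction, and to be careful that every use of cyclic reducedness is at a point where the two extreme letters of the whole word $g$ (not of a proper subword) are the ones being paired.
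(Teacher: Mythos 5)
Your proposal is correct and follows essentially the same route as the paper's proof: the same splitting of a cyclically reduced word into two nonempty cyclically reduced subwords, the same strengthened induction hypothesis (exponentially decaying contraction strength, uniform control of $A^{\subge}_g$ and $A^{\suble}_g$ near those of the extreme letters, the sharper bound $(l-1)\mu(2C)$), and the same combination of Propositions \ref{regular_product} and \ref{invariant_additivity} with Lemma \ref{continuity_of_non_degeneracy} for the induction step. The only cosmetic difference is that the paper carries the telescoped geometric bound in closed form, namely $\alpha^{\mathrm{Haus}}(A^{\subge}_{g}, A^{\subge}_{g_{i_1}^{\sigma_1}}) \lesssim_C 2(1-2^{-(l-1)})s_3(C)$, as an explicit induction clause instead of your ``unrolling the recursion'' argument with a fixed $\eps_0$.
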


The proof proceeds of course by induction, with Proposition \ref{regular_product} and Proposition \ref{invariant_additivity} providing the induction step; however, there is a subtlety. When we suppose that the pair $(g, h)$ is $C$-non-degenerate, we can only conclude that $gh$ is $2C$-non-degenerate; this would break the induction if we used a direct approach. To guarantee $2C$-non-degeneracy for all words, we must use the fact that the contraction strength of $g$ grows exponentially with its length, so that the (Hausdorff) distance between $A^{\subge}_{g}$ and $A^{\subge}_{g_{i_1}^{\sigma_1}}$ is in fact a sum of exponentially diminishing increments and remains bounded. To take this into account, we shall prove by induction a series of slightly more complicated statements.

\begin{proof}
Let us fix $C \geq 1$, a positive constant $s_3(C) \leq 1$ to be determined in the course of the proof, and a family $g_1, \ldots, g_k$ satisfying the hypotheses (H1), (H2) and (H3). We will show by induction on $\max(l, l')$ that whenever we take a nonempty cyclically reduced word $g = g_{i_1}^{\sigma_1} \cdots g_{i_l}^{\sigma_l}$, we have the following properties:
\begin{enumerate}[(i)]
\addtolength{\itemsep}{.5\baselineskip}
\item The map $g$ is $\mathbb{R}$-regular.
\item $\begin{cases}
       \alpha^\mathrm{Haus} \left(A^{\subge}_{g},\;
                                  A^{\subge}_{g_{i_1}^{\sigma_1}} \right)
          \lesssim_C 2 \left( 1 - 2^{-(l-1)} \right) s_3(C) \vspace{1mm} \\
       \alpha^\mathrm{Haus} \left(A^{\suble}_{g},\;
                                  A^{\suble}_{g_{i_l}^{\sigma_l}} \right)
          \lesssim_C 2 \left( 1 - 2^{-(l-1)} \right) s_3(C).
       \end{cases}$
\item $s(g) \leq 2^{-(l-1)} s_3(C)$.
\item $\displaystyle \left\| M(g) - \sum_{m=1}^{l} M(g_{i_m}^{\sigma_m}) \right\| \leq (l-1)\mu(2C)$.
\item If $h = g_{i'_1}^{\sigma'_1} \cdots g_{i'_{l'}}^{\sigma'_{l'}}$ is another nonempty cyclically reduced word such that $gh$ is also cyclically reduced, the pair $(g, h)$ is $2C$-non-degenerate.
\end{enumerate}
In particular, the properties (i), (iv) and (v) imply the Proposition.

Indeed, all five statements are true for $l=1$ (and $l'=1$). Now let $l \geq 2$, and suppose that statements (i) through (v) are true for all cyclically reduced words of length $m$ with $1 \leq m \leq l-1$. Take any cyclically reduced word $g = g_{i_1}^{\sigma_1} \cdots g_{i_l}^{\sigma_l}$. Then we claim that it is possible to decompose $g$ into two cyclically reduced subwords
\[g' := g_{i_1}^{\sigma_1} \cdots g_{i_m}^{\sigma_m}
\quad \text{and} \quad
  g'' := g_{i_{m+1}}^{\sigma_{m+1}} \cdots g_{i_l}^{\sigma_l},\]
both nonempty (that is, $0 < m < l$).

Indeed, suppose the contrary: suppose that for every such $m$, we have
\[(i_m, \sigma_m) = (i_1, -\sigma_1)
\quad \text{ or } \quad
(i_{m+1}, \sigma_{m+1}) = (i_l, -\sigma_l).\]
Let us show, by induction on $m$, that the first condition always fails, hence the second always holds. For $m=1$, this is obvious. Suppose we know it for $m-1$; then we have
\[(i_m, \sigma_m) = (i_l, -\sigma_l) = (i_2, \sigma_2) \neq (i_1, -\sigma_1)\]
because the word is reduced. Now taking $m = l-1$, we get a contradiction.

By induction hypotheses (i) and (v), $g'$ and $g''$ are $\mathbb{R}$-regular and form a $2C$-non-degenerate pair; by induction hypothesis (iii), we have $s(g') \leq 2^{-(m-1)} s_3(C) \leq s_3(C)$ and we may suppose that $s_3(C) \leq s_1(2C)$ (similarly for $g'^{-1}$, $g''$, $g''^{-1}$). Thus the pair $(g', g'')$ satisfies Proposition \ref{regular_product} (with constant $2C$). Let us show that $g$ satisfies the properties (i) through (v).
\begin{itemize}
\item The property (i) (that $g$ is $\mathbb{R}$-regular) is a direct consequence of Proposition \ref{regular_product}.

\item Let us check the property (iii). From Proposition \ref{regular_product} (ii), it follows that $s(g) \lesssim_C s(g')s(g'')$; we then have, by induction hypothesis (iii):
\begin{align*}
s(g) &\lesssim_C \left( 2^{-(m-1)} s_3(C) \right)\left( 2^{-(l-m-1)} s_3(C) \right) \\
     &=          s_3(C) \left( 2^{-(l-2)} s_3(C) \right).\intertext{
If we take $s_3(C)$ sufficiently small, we get
}s(g) &\leq 2^{-(l-1)} s_3(C).\end{align*}

\item Now we check (ii); it is enough to check the first inequality (the second one follows by substituting $g^{-1}$.) Remember that $\alpha^\mathrm{Haus}$ is a metric on the set of all vector subspaces of $\hat{\mathfrak{g}}$, so we have
\begin{align*}
  \alpha^\mathrm{Haus} \left(A^{\subge}_{g},\;
                             A^{\subge}_{g_{i_1}^{\sigma_1}} \right)
&\leq\;
  \alpha^\mathrm{Haus} \left(A^{\subge}_{g},\;
                             A^{\subge}_{g'} \right)
+ \alpha^\mathrm{Haus} \left(A^{\subge}_{g'},\;
                             A^{\subge}_{g_{i_1}^{\sigma_1}} \right).\intertext{
Estimating the first term by Proposition \ref{regular_product} (i) and the second term by induction hypothesis (ii), we get:
} \alpha^\mathrm{Haus} \left(A^{\subge}_{g},\;
                             A^{\subge}_{g_{i_1}^{\sigma_1}} \right)
&\lesssim_C\, s(g') + 2 \left( 1 - 2^{-(m-1)} \right) s_3(C).\intertext{
Now by induction hypothesis (iii) we have $s(g') \leq 2^{-(m-1)} s_3(C)$, hence
} \alpha^\mathrm{Haus} \left(A^{\subge}_{g},\;
                             A^{\subge}_{g_{i_1}^{\sigma_1}} \right)
&\lesssim_C\,\, 2^{-(m-1)} s_3(C) + 2 \left( 1 - 2^{-(m-1)} \right) s_3(C)\\
&=\quad    2 \left( 1 - 2^{-m} \right) s_3(C) &\\
&\leq\quad 2 \left( 1 - 2^{-(l-1)} \right) s_3(C),
\end{align*}
since $m \leq l-1$. (Here the implicit multiplicative constant is the same as in Proposition \ref{regular_product} (i), and does not change after the induction step.)

\item Next we check (iv). By induction hypothesis (iv), we have
\[\begin{cases}
\displaystyle \left\| M(g') - \sum_{p=1}^{m} M(g_{i_p}^{\sigma_p}) \right\| \leq (m-1)\mu(2C) \vspace{2mm} \\
\displaystyle \left\| M(g'') - \sum_{p=m+1}^{l} M(g_{i_p}^{\sigma_p}) \right\| \leq (l-m-1)\mu(2C).
\end{cases}\]
If we take $s_3(C) \leq s_2(2C)$, then $g'$ and $g''$ satisfy Proposition \ref{invariant_additivity}, hence
\[\left\| M(g) - M(g') - M(g'') \right\| \leq \mu(2C).\]
Adding these three inequalities together, we get the desired conclusion.

\item It remains to check (v): let $h = g_{i'_1}^{\sigma'_1} \cdots g_{i'_{l'}}^{\sigma'_{l'}}$ be another cyclically reduced word (with $1 \leq l' \leq l$) such that $gh$ is also cyclically reduced. We need to check that the four pairs $(A^\subge_g, A^\suble_g)$, $(A^\subge_g, A^\suble_h)$, $(A^\subge_h, A^\suble_g)$ and $(A^\subge_h, A^\suble_h)$ are $2C$-non-degenerate. This follows by Lemma \ref{continuity_of_non_degeneracy} from the property (ii) (applied to both $g$ and $h$) and from the hypothesis (H2), provided we take $s_3(C)$ small enough. \qedhere
\end{itemize}
\end{proof}

\section{Construction of the group}
\label{sec:construction}

We now show (Lemma \ref{properly_discontinuous}) that if we take a group generated by a family of $\mathbb{R}$-regular, $C$-non-degenerate, sufficiently contracting maps with suitable Margulis invariants, it satisfies all of the conclusions of the Main Theorem, except Zariski-density. We then exhibit such a group that is also Zariski-dense (and thus prove the Main Theorem).

Recall (from Section \ref{sec:additivity}) that $w_0$ is some element of $G$ such that $w_0(\mathfrak{p}^+, \mathfrak{p}^-) = (\mathfrak{p}^-, \mathfrak{p}^+)$. Note that there is a nonzero vector $v \in \mathfrak{z}$ that is fixed by $-w_0$. Indeed, $w_0$ normalizes the space $\mathfrak{p}^+ \cap \mathfrak{p}^- = \mathfrak{l}$; hence it normalizes its split center $\mathfrak{a}$. Since $w_0$ exchanges positive and negative roots, the open Weyl chamber $\mathfrak{a}^+$ is stable by the involution $-w_0$. Since $\mathfrak{a}^+$ is a convex set, we may take $v = v' - w_0(v')$ for any $v'$ in $\mathfrak{a}^+$. Then we have indeed $v \neq 0$, $v \in \mathfrak{a} \subset \mathfrak{z}$ and $-w_0(v) = v$.

From now on, we fix a vector $M_0$ collinear to $v$ and such that $\|M_0\| = 2\mu(2C)$.
\begin{lemma}
\label{properly_discontinuous}
\begin{samepage}
Take any family $g_1, \ldots, g_k \in G \ltimes \mathfrak{g}$ satisfying the hypotheses (H1), (H2) and (H3) from Proposition \ref{Schottky_group}, and also the additional condition
\begin{enumerate}[(H4)]
\item For every $i$, $M(g_i) = M_0$.
\end{enumerate}
\end{samepage}
Then these maps generate a free group acting properly discontinuously on the affine space~$\mathfrak{g}_{\Aff}$.
\end{lemma}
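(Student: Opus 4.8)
The plan is to obtain both conclusions from Proposition~\ref{Schottky_group} together with the lower bound on Margulis invariants forced by hypothesis (H4); write $\Gamma := \langle g_1,\dots,g_k\rangle$. First I would record the growth estimate. By Proposition~\ref{invariant_additivity}~(i) we have $M(g_i^{-1}) = -w_0(M(g_i)) = -w_0(M_0)$, and since $M_0$ is collinear to the vector $v = v' - w_0(v')$ (with $v'\in\mathfrak a^+$), which satisfies $-w_0(v)=v$, this equals $M_0$. Hence for every nonempty cyclically reduced word $g = g_{i_1}^{\sigma_1}\cdots g_{i_l}^{\sigma_l}$, Proposition~\ref{Schottky_group} gives $\|M(g) - lM_0\| \le l\,\mu(2C)$, so that, using $\|M_0\| = 2\mu(2C)$,
\[\|M(g)\| \;\ge\; l\,\|M_0\| - l\,\mu(2C) \;=\; l\,\mu(2C) \;>\; 0 .\]
In particular $M(g)\neq 0$, so $g$ is not the identity map. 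Since every nonempty reduced word in $g_1,\dots,g_k$ is conjugate, in the abstract free group of rank $k$, to a nonempty cyclically reduced one, the natural homomorphism from that free group into $G\ltimes\mathfrak g$ is injective, and $\Gamma$ is free of rank $k$.

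Next I would establish a displacement bound for cyclically reduced words that is \emph{uniform in the base point}: for $g$ cyclically reduced of length $l$ and every $x\in\mathfrak g_{\Aff}$,
\[\|g(x)-x\| \;\gtrsim_C\; \|M(g)\| \;\ge\; l\,\mu(2C).\]
To see this, decompose $\hat x = (x,1)$ as $x^{\subg}+x^{\sube}+x^{\subl}$ in $\hat{\mathfrak g} = V^{\subg}_{g}\oplus A^{\sube}_{g}\oplus V^{\subl}_{g}$; the height-$1$ condition forces $x^{\sube}\in A^{\sube}_{g}\cap\mathfrak g_{\Aff}$, and $g(\hat x)-\hat x$ has its three components in $V^{\subg}_{g}$, $V^{\sube}_{g}$, $V^{\subl}_{g}$. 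By Proposition~\ref{Schottky_group} the map $g$ is $2C$-non-degenerate, so an optimal canonizing map $\phi$ of $g$ (with $\|\phi^{\pm1}\|\le 2C$) sends these three spaces to the pairwise orthogonal $\mathfrak n^+,\mathfrak l,\mathfrak n^-$. Therefore $\|\phi(g(\hat x)-\hat x)\| \ge \|\phi(g(x^{\sube})-x^{\sube})\| \ge \|\pi_{\mathfrak z}\phi(g(x^{\sube})-x^{\sube})\| = \|M(g)\|$ by Proposition~\ref{margulis_invariant_prop}, and since $g(\hat x)-\hat x = (g(x)-x,0)$ this yields $\|g(x)-x\|\ge\frac1{2C}\|M(g)\|$.

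Finally, proper discontinuity. Fix a compact $K\subset\mathfrak g_{\Aff}$ and suppose $\gamma K\cap K\neq\emptyset$ for some $\gamma\in\Gamma\setminus\{1\}$. Passing to the cyclic reduction, write $\gamma = u\,w\,u^{-1}$ with $w$ cyclically reduced of length $l\ge 1$ and $u$ reduced of length $m$, the product being reduced; then $w(u^{-1}K)\cap u^{-1}K\neq\emptyset$ and $\gamma(x)-x = \ell(u)\bigl(w(u^{-1}x)-u^{-1}x\bigr)$. If $u$ ranges over a finite set this reduces immediately to the displacement bound above applied to $w$ on the compact sets $u^{-1}K$, which forces $l$, hence the word length $2m+l$ of $\gamma$, to be bounded, so only finitely many such $\gamma$ occur. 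The hard case is when $m$ is unbounded: then $\|\ell(u)^{\pm1}\|$ is not controlled, and the estimate for $w$ cannot simply be pushed through $\ell(u)$. Here I would run the Margulis ping-pong argument on the \emph{linear} parts: by Lemma~\ref{affine_to_vector} the maps $\ell(g_i)^{\pm1}$ again satisfy (H1)--(H3) with the same $C$, and (H2) makes the $2k$ dynamical spaces $A^{\subge}_{g_i}, A^{\suble}_{g_i}$ pairwise uniformly transverse; combined with strong contraction this yields the usual ping-pong configuration, in which $\ell$ of a reduced word sends the complement of the repelling neighborhood of its last letter into a thin neighborhood of the attracting space of its first letter. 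Since $\gamma = uwu^{-1}$ is reduced, the last letter of $u$ is not the inverse of the first letter of $w$, so $A^{\subge}_{w}$ — uniformly close (Proposition~\ref{Schottky_group}) to the attracting space of that first letter — lies outside the repelling neighborhood of $\ell(u)$; hence the component of $w(u^{-1}x)-u^{-1}x$ along $A^{\subge}_{w}$ (of size $\gtrsim_C l$, by the paragraph above) is not destroyed by $\ell(u)$, and tracking it together with the growth of the displacement along the neutral direction at rate $\gtrsim_C l$ shows $\|\gamma(x)-x\|\to\infty$ as the word length of $\gamma$ tends to infinity, uniformly for $x\in K$. Thus $\{\gamma : \gamma K\cap K\neq\emptyset\}$ is finite and $\Gamma$ acts properly discontinuously. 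The main obstacle is exactly this last step: setting up the ping-pong neighborhoods for the linear action and verifying that the resulting displacement estimate is genuinely uniform on $K$; everything else is a direct application of the results already established.
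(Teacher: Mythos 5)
Your freeness argument and your uniform displacement bound for cyclically reduced words are both correct, and the latter is essentially the paper's own mechanism: the paper phrases it by observing that $\hat{\pi}_{\mathfrak{z}} \circ \phi_g \circ g = \tau_{M(g)} \circ \hat{\pi}_{\mathfrak{z}} \circ \phi_g$, so that the $\mathfrak{z}$-displacement of \emph{every} point of $\mathfrak{g}_{\Aff}$ in the $\phi_g$-frame is exactly $M(g)$, whose norm is at least $l\mu(2C)$ by \eqref{eq:Margulis_unbounded}; your orthogonal-decomposition version of this is fine and the constant $\tfrac{1}{2C}$ comes out the same way.

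The gap is in the passage from cyclically reduced words to arbitrary reduced words. Writing $\gamma = u\,w\,u^{-1}$ puts you, as you yourself note, in a situation where $\|\ell(u)^{\pm 1}\|$ is unbounded, and your proposed repair is not a proof: the displacement $w(y)-y$ is large in the $\mathfrak{z}$-direction \emph{of the frame $\phi_w$}, i.e.\ it has a large component in $V^{\sube}_{w}$, and nothing in hypotheses (H1)--(H3) prevents $\ell(u)$ from contracting that particular direction by an arbitrarily large factor (the contraction estimates of Proposition~\ref{regular_product} control $\ell(u)$ only relative to its own dynamical decomposition, and $V^{\sube}_{w}$ is neither the attracting nor the neutral space of $\ell(u)$). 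Making the sketched ping-pong rigorous would require a genuinely new quantitative lemma, which is precisely the work you have deferred. The paper avoids the conjugation entirely with a short trick you should adopt: given any reduced word $g$, at most two of the $2k$ symbols $g_i^{\sigma}$ are forbidden as a left prefix, so one can always choose $i,\sigma$ with $g_i^{\sigma}g$ cyclically reduced; then $g(K)\cap K=\emptyset$ iff $g_i^{\sigma}g(K)\cap g_i^{\sigma}(K)=\emptyset$, so it suffices to prove $h(K)\cap K'=\emptyset$ for all sufficiently long \emph{cyclically reduced} $h$ and the fixed compact set $K' := \bigcup_{i,\sigma} g_i^{\sigma}(K)$ --- which your displacement bound (or the paper's projection argument) already delivers. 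With that substitution your proof closes; without it, it does not.
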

\begin{proof}
To show that the group is free, simply remark that any non\-empty reduced word on the $g_i^{\pm 1}$ is conjugate to some cyclically reduced word, which, by Proposition \ref{Schottky_group}, is $\mathbb{R}$-regular and in particular different from the identity.

To show proper discontinuity, the first step is to prove the inequality~\eqref{eq:Margulis_unbounded} below, which says that cyclically reduced elements of the group have Margulis invariants that grow unboundedly. Take any cyclically reduced word $g = g_{i_1}^{\sigma_1} \cdots g_{i_l}^{\sigma_l}$. Then from Proposition~\ref{Schottky_group}, it follows that
\[\|M(g)\| \geq
\left\| \sum_{m=1}^{l} M(g_{i_m}^{\sigma_m}) \right\| - l\mu(2C).\]
On the other hand, we know that for every $i$ and $\sigma$, we have
\[M(g_i^\sigma) = M_0.\]
Indeed if $\sigma = +1$, this is true by hypothesis; if $\sigma = -1$, we have
\[M(g_i^{-1}) = -w_0(M(g_i)) = -w_0(M_0) = M_0,\]
by Proposition \ref{invariant_additivity} (i) and by definition of $M_0$. We conclude that
\begin{align}
\label{eq:Margulis_unbounded}
\|M(g)\| &\geq \|lM_0\| - l\mu(2C) \nonumber \\
  &= 2l\mu(2C) - l\mu(2C) \nonumber \\
  &= l\mu(2C).
\end{align}

Now let $K$ be any compact subset of the affine space $\mathfrak{g}_{\Aff}$, and suppose that $g$ is any reduced (not necessarily cyclically reduced) word on the $g_i^{\pm 1}$. We need to show that when $g$ is sufficiently long, we have $g(K) \cap K = \emptyset$.

Note first that it is always possible to find an index $i$ and a sign $\sigma$ such that $g_i^\sigma g$ is cyclically reduced. Then we have:
\begin{align*}
g(K) \cap K = \emptyset
  &\iff g_i^\sigma g(K) \cap g_i^\sigma(K) = \emptyset.
\end{align*}
Setting $K' = \bigcup_{i, \sigma} g_i^\sigma(K)$ (which is of course still compact), it is sufficient to prove that whenever $g$ is \emph{cyclically} reduced and sufficiently long, we have
\begin{equation}
\label{eq:gKcapK'}
g(K) \cap K' = \emptyset.
\end{equation}
Let $\phi_g$ be an optimal canonizing map for $g$, and let us define $\hat{\pi}_{\mathfrak{z}}$ on the whole space $\hat{\mathfrak{g}}$ as the (orthogonal) projection onto $\mathfrak{z} \oplus \mathbb{R}_0$ parallel to $\mathfrak{d} \oplus \mathfrak{n}^+ \oplus \mathfrak{n}^-$ (which may be seen as an affine map acting on $\mathfrak{g}_{\Aff}$). Then by definition of the Margulis invariant, we have
\begin{align*}
\hat{\pi}_{\mathfrak{z}} \circ \phi_g \left( g(K) \right)
  &= \tau_{M(g)} \circ \hat{\pi}_{\mathfrak{z}} \circ \phi_g(K) \\
  &= \hat{\pi}_{\mathfrak{z}} \circ \phi_g(K) + M(g).
\end{align*}
Now note that, on the one hand, $g$ is $2C$-non-degenerate by Proposition \ref{Schottky_group}, hence
\[\|\hat{\pi}_{\mathfrak{z}} \circ \phi_g(x - y)\| \leq \|\phi_g(x - y)\| \leq 2C\|x-y\|\]
for any $x, y \in \mathfrak{g}_{\Aff}$. On the other hand, recall the inequality \eqref{eq:Margulis_unbounded}:
\[\|M(g)\| \geq l\mu(2C),\]
where $l$ is the length of $g$. It follows that whenever
\[l > \frac{2C}{\mu(2C)}\max_{x \in K, y \in K'} \|x-y\|,\]
the images $\hat{\pi}_{\mathfrak{z}} \circ \phi_g(g(K))$ and $\hat{\pi}_{\mathfrak{z}} \circ \phi_g(K')$ are disjoint. This implies \eqref{eq:gKcapK'}, which in turn implies the conclusion.
\end{proof}

\begin{proof}[Proof of Main Theorem]
The strategy is now clear: we find a positive constant $C \geq 1$ and a family of maps $g_1, \ldots, g_k \in G \ltimes \mathfrak{g}$ (with $k \geq 2$) that satisfy the conditions (H1) through (H4) and whose linear parts generate a Zariski-dense subgroup of $G$, then we apply Lemma \ref{properly_discontinuous}. We proceed in several stages.
\begin{itemize}
\item By a result of Benoist (Lemma 7.2 in \cite{Ben96}), we may find a family of maps $\gamma_1, \ldots, \gamma_k \in G$ (that we shall see as elements of $G \ltimes \mathfrak{g}$, by identifying $G$ with the stabilizer of~$\mathbb{R}_0$), such that:
\begin{enumerate}[(i)]
\item Every $\gamma_i$ is $\mathbb{R}$-regular (this is (H1)).
\item For any two indices $i$, $i'$ and signs $\sigma$, $\sigma'$ such that $(i', \sigma') \neq (i, -\sigma)$, the spaces $V^{\subge}_{\gamma_i^\sigma}$ and $V^{\suble}_{\gamma_{i'}^{\sigma'}}$ are transverse.
\item Any single $\gamma_i$ generates a Zariski-connected group.
\item All of the $\gamma_i$ generate together a Zariski-dense subgroup of $G$.
\end{enumerate}
Note that Zariski-density is only possible if $k \geq 2$.
\item Clearly, every pair of transverse spaces is $C$-non-degenerate for some finite $C$; and here we have a finite number of such pairs. Hence if we choose some suitable value of $C$ (that we fix for the rest of this proof), the hypothesis (H2) becomes a direct consequence of the condition (ii) above.
\item From condition (iii) (Zariski-connectedness), it follows that any algebraic group containing some power $\gamma_i^N$ of some generator must actually contain the generator $\gamma_i$ itself. This allows us to replace every $\gamma_i$ by some power $\gamma_i^N$ without sacrificing condition (iv) (Zariski-density). Clearly, conditions (i), (ii) and (iii) are then preserved as well. If we choose $N$ large enough, we may suppose that the numbers $s(\gamma_i^{\pm 1})$ are as small as we wish: this gives us (H3). In fact, we shall suppose that for every $i$, we have $s(\gamma_i^{\pm 1}) \leq s_4(C)$ for an even smaller constant $s_4(C)$, to be specified soon.
\item To satisfy (H4), we replace the maps $\gamma_i$ by the maps
\[g_i := \tau_{\phi_i^{-1}(M_0)} \circ \gamma_i\]
(for $1 \leq i \leq k$), where $\phi_i$ is a canonizing map for $\gamma_i$.

We need to check that this does not break the first three conditions. Indeed, for every $i$, we have $\gamma_i = \ell(g_i)$; even better, since the translation vector $\phi_i^{-1}(M_0)$ lies in the subspace $V^{\sube}_{\gamma_i}$ stable by~$\gamma_i$, obviously the translation commutes with $\gamma_i$, hence $g_i$ has the same geometry as $\gamma_i$ (by this we mean that $A^{\subge}_{g_i} = A^{\subge}_{\gamma_i} = V^{\subge}_{\gamma_i} \oplus \mathbb{R}_0$ and $A^{\suble}_{g_i} = A^{\suble}_{\gamma_i} = V^{\suble}_{\gamma_i} \oplus \mathbb{R}_0$). Hence the $g_i$ still satisfy the hypotheses (H1) and (H2), but now we have $M(g_i) = M_0$ (this is (H4)). As for contraction strength, we have, by Lemma \ref{affine_to_vector}:
\[s(g_i) \lesssim_C s(\gamma_i)\|\tau_{M_0}\| \leq s_4(C)\|\tau_{M_0}\|,\]
and similarly for $g_i^{-1}$. Recall that $\|M_0\| = 2\mu(2C)$, hence $\|\tau_{M_0}\|$ depends only on~$C$. It follows that if we choose $s_4(C)$ small enough, the hypothesis (H3) is satisfied.

We conclude that the group generated by the elements $g_1, \ldots, g_k$ acts properly discontinuously (by Lemma \ref{properly_discontinuous}), is free (by the same result), nonabelian (since $k \geq 2$), and has linear part Zariski-dense in $G$, QED. \qedhere
\end{itemize}
\end{proof}

\section*{Acknowledgements}
I would like to thank my PhD advisor, Prof.~Yves Benoist, whose help while I was working on this paper has been invaluable to me.

\bibliographystyle{plain}
\bibliography{/home/ilia/Documents/Travaux_mathematiques/mybibliography.bib}
\end{document}